\DeclarePairedDelimiter\ceil{\lceil}{\rceil}
\author{Tuomas Orponen, Pablo Shmerkin and Hong Wang}
\title[Radial projections]{Kaufman and Falconer estimates for radial projections \\ and a continuum version of Beck's Theorem}
\address{Department of Mathematics and Statistics\\ University of Jyv\"askyl\"a,
P.O. Box 35 (MaD)\\
FI-40014 University of Jyv\"askyl\"a\\
Finland} \email{tuomas.t.orponen@jyu.fi}
\address{Department of Mathematics\\The University of British Columbia,
1984 Mathematics Road\\
Vancouver, BC, V6T 1Z2\\
Canada } \email{pshmerkin@math.ubc.ca}
\address{Department of Mathematics, UCLA, Los Angeles, CA 90095, USA} \email{hongwang@math.ucla.edu}
\date{\today}
\subjclass[2010]{28A80 (primary) 28A78 (secondary)}
\keywords{Radial projections, Hausdorff dimension, Exceptional sets}
\thanks{T.O. is
 supported by the Academy of Finland via the projects
\emph{Quantitative rectifiability in Euclidean and non-Euclidean
spaces} and \emph{Incidences on Fractals}, grant Nos. 309365,
314172, 321896. }
\thanks{P.S. is supported by an NSERC Discovery Grant}
\thanks{H.W. is supported by NSF Grant DMS-2055544}
\newcommand{\R}{\mathbb{R}}
\newcommand{\N}{\mathbb{N}}
\newcommand{\spt}{\operatorname{spt}}
\newcommand{\Hd}{\dim_{\mathrm{H}}}
\newcommand{\spa}{\operatorname{span}}
\newcommand{\dist}{\operatorname{dist}}
\newcommand{\e}{\epsilon}
\def\Barint_#1{\mathchoice
          {\mathop{\vrule width 6pt height 3 pt depth -2.5pt
                  \kern -8pt \intop}\nolimits_{#1}}%
          {\mathop{\vrule width 5pt height 3 pt depth -2.6pt
                  \kern -6pt \intop}\nolimits_{#1}}%
          {\mathop{\vrule width 5pt height 3 pt depth -2.6pt
                  \kern -6pt \intop}\nolimits_{#1}}%
          {\mathop{\vrule width 5pt height 3 pt depth -2.6pt
                  \kern -6pt \intop}\nolimits_{#1}}}
\numberwithin{equation}{section}
\theoremstyle{plain}
\newtheorem{thm}[equation]{Theorem}
\newtheorem*{"thm"}{"Theorem"}
\newtheorem{conjecture}[equation]{Conjecture}
\newtheorem{lemma}[equation]{Lemma}
\newtheorem{cor}[equation]{Corollary}
\newtheorem{proposition}[equation]{Proposition}
\theoremstyle{definition}
\newtheorem{definition}[equation]{Definition}
\theoremstyle{remark}
\newtheorem{remark}[equation]{Remark}
\newcommand{\nref}[1]{(\hyperref[#1]{#1})}
\DeclareMathSymbol{\intop}  {\mathop}{mathx}{"B3}
\begin{document}

\begin{abstract} We provide several new answers on the question: how do radial projections distort the dimension of planar sets?  Let $X,Y \subset \mathbb{R}^{2}$ be non-empty Borel sets. If $X$ is not contained on any line, we prove that
\[
\sup_{x \in X} \dim_{\mathrm{H}} \pi_{x}(Y \, \setminus \, \{x\}) \geq \min\{\dim_{\mathrm{H}} X,\dim_{\mathrm{H}} Y,1\}.
\]
If $\dim_{\mathrm{H}} Y > 1$, we have the following improved lower bound:
\[ 
\sup_{x \in X} \dim_{\mathrm{H}} \pi_{x}(Y \, \setminus \, \{x\}) \geq \min\{\dim_{\mathrm{H}} X + \dim_{\mathrm{H}} Y - 1,1\}.
\]
Our results solve conjectures of Lund-Thang-Huong, Liu, and the first author. Another corollary is the following continuum version of Beck's theorem in combinatorial geometry: if $X \subset \mathbb{R}^{2}$ is a Borel set with the property that $\dim_{\mathrm{H}} (X \, \setminus \, \ell) = \dim_{\mathrm{H}} X$ for all lines $\ell \subset \mathbb{R}^{2}$, then the line set spanned by $X$ has Hausdorff dimension at least $\min\{2\dim_{\mathrm{H}} X,2\}$.

While the results above concern $\mathbb{R}^{2}$, we also derive some counterparts in $\mathbb{R}^{d}$ by means of integralgeometric considerations. The proofs are based on an $\epsilon$-improvement in the Furstenberg set problem, due to the two first authors, a bootstrapping scheme introduced by the second and third author, and a new planar incidence estimate due to Fu and Ren. \end{abstract}

\maketitle

\tableofcontents

\section{Introduction}

We study the distortion of Hausdorff dimension under \emph{radial projections} $\pi_{x} \colon \R^{d} \, \setminus \, \{x\} \to S^{d - 1}$ defined by
\begin{displaymath} \pi_{x}(y) := \frac{y - x}{|y - x|}, \qquad y \in \R^{d} \, \setminus \, \{x\}. \end{displaymath}
Here are the main results in the plane:
\begin{thm}\label{mainNew} Let $X \subset \R^{2}$ be a (non-empty) Borel set which is not contained on any line. Then, for every Borel set $Y \subset \R^{2}$,
\begin{displaymath} \sup_{x \in X} \Hd \pi_{x}(Y \, \setminus \, \{x\}) \geq \min\{\Hd X,\Hd Y,1\}. \end{displaymath}
\end{thm}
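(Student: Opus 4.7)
I would attempt to prove Theorem \ref{mainNew} by contradiction, combining Frostman measure techniques with the tube-incidence technology advertised in the abstract. Suppose some $s < \min\{\Hd X, \Hd Y, 1\}$ satisfies $\Hd \pi_{x}(Y \setminus \{x\}) < s$ for every $x \in X$. Then one can choose Frostman sub-measures $\mu$ on $X$ and $\nu$ on $Y$ with exponents $s_{X} \in (s, \Hd X)$ and $s_{Y} \in (s, \Hd Y)$. The hypothesis that $X$ is not contained on a line lets me further restrict to a portion of $\spt \mu$ spanning a positive-area triangle, preventing the radial bushes emanating from $\spt \mu$ from collapsing to a single common line at critical scales.

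At each small dyadic scale $\delta$, the assumption $\Hd \pi_{x}(Y \setminus \{x\}) < s$ gives, for each $x \in \spt \mu$, a family $\calT_{x}$ of $\delta$-tubes through $x$ with $|\calT_{x}| \lesssim \delta^{-s - \epsilon}$ covering $\nu$-almost all of $Y$. After standard pigeonholing, I extract a $(\delta,s_{X})$-set $P_{X} \subset \spt \mu$ of $\delta$-separated points, a $(\delta,s_{Y})$-set $P_{Y} \subset \spt \nu$, and a common scale on which each bush $\calT_{x}$ still covers a definite fraction of $P_{Y}$. Double-counting then produces a large lower bound on the total incidence count between $P_{Y}$ and the combined tube family $\bigcup_{x \in P_{X}} \calT_{x}$, of order $|P_{X}|\,|P_{Y}|\,\delta^{s+\epsilon}$.

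The next step is to bound this incidence count from above. When $s_{Y} \leq 1$ I would invoke the $\epsilon$-improvement to the Furstenberg set problem of Orponen and Shmerkin, applied to the tubes coming from the radial bushes; when $s_{Y} > 1$ the Fu-Ren planar incidence estimate yields a cleaner gain. Either way, the upper bound becomes incompatible with the lower bound unless $s$ meets the target threshold $\min\{s_{X},s_{Y},1\}$ up to $\epsilon$, contradicting the choice of $s_{X}, s_{Y}$. Where the raw incidence gain falls short of the sharp bound, I would iterate the Shmerkin-Wang bootstrap: feed the improved pointwise bound on $\Hd \pi_{x}(Y)$ back into the discretization and re-apply the incidence estimate to ratchet up the exponent.

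The main obstacle will be the bootstrap step. The bushes $\calT_{x}$ are extremely rigid (all tubes in $\calT_{x}$ are concurrent at $x$), so naive incidence estimates suffer from the near-degenerate configurations in which a substantial part of $Y$ concentrates on few lines through many points of $X$. The "$X$ not on a line" hypothesis is precisely what rules out the worst such degeneracy, but quantitatively propagating this non-concurrence through the discretization — and combining it with the Furstenberg $\epsilon$-improvement and the Shmerkin-Wang bootstrap so as to upgrade the easy Kaufman-type bound $\Hd \pi_{x}(Y) \geq \tfrac{1}{2}\Hd Y$ all the way to $\min\{\Hd X, \Hd Y, 1\}$ — is where I expect the bulk of the technical work to lie.
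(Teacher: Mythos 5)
Your high-level strategy---discretize, count incidences, invoke the Orponen--Shmerkin $\epsilon$-improvement, bootstrap---does align with the paper's, but the sketch has a genuine gap where you yourself flag "the bulk of the technical work," and your diagnosis of the difficulty is off in a way that matters. First, the degeneracy that actually forces the hard step is \emph{not} "$Y$ concentrating on few lines through many points of $X$," and the hypothesis that $X$ is not contained in a line does not rule it out. The real obstruction (the heart of Lemma \ref{lem: radial2}) is that for many $x$ and many tubes $T \in \calT_x$, the $\nu$-mass of $Y\cap T$ might concentrate on a tiny ball $B_T$ of radius $r^\kappa$; in that case the bush $\calT_x$ looks, at scale $r^\kappa$, like a bush of \emph{lines} and the Furstenberg input gives nothing. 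The paper resolves this with a two-ends/pigeonholing argument at an intermediate scale $\xi\in[r,r^\kappa]$: one finds many $(r/\xi)$-separated tubes through a fixed $y\in Y$ carrying mass $\gtrsim r^{\sigma+O(\eta)}$ in the annulus $A(y,\xi,2\xi)$, and these are boundedly overlapping outside $B(y,\xi)$, so summing their masses contradicts the $s$-Frostman bound on $\nu(B(y,2\xi))$. Crucially, this step also needs the \emph{symmetric} thin-tubes hypothesis for $(\nu,\mu)$ to bound how many of the relevant tubes can lie in one $(r/\xi)$-tube through $y$; your one-sided formulation doesn't supply that.

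Two further issues: (i) you cannot simply "cover $\nu$-almost all of $Y$" by bushes $\calT_x$ at a single scale $\delta$ directly from the pointwise assumption $\Hd \pi_x(Y\setminus\{x\})<s$, because the failing cover scale varies with $x$; the paper avoids this by working entirely with the measure-theoretic \emph{thin tubes} formulation (Proposition \ref{ShWaProp} supplies the base case, Lemma \ref{lem: radial2} the improvement step, Corollary \ref{cor:radial} the iteration), and the single-scale structure is extracted via a pigeonhole inside that framework, not from the Hausdorff-dimension hypothesis. (ii) Your proposal nowhere addresses the degenerate configuration where $Y$ (or $\nu$) charges a line $\ell$, in which case the bootstrap machinery does not apply: the paper treats this separately, first disposing of the case $\sup_\ell \Hd(Y\cap\ell)\geq \min\{\Hd X,\Hd Y,1\}$ by choosing $x\in X\setminus\ell$, and then handling $\mu(\ell)=1$ via Lemma \ref{l:kaufman} (a projective change of variables reducing radial projections from points on $\ell$ to orthogonal projections, plus Kaufman's classical bound). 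These cases are where the "$X$ not on a line" hypothesis is actually used, rather than in the discretization as you suggest. Finally, the Fu--Ren incidence theorem is not needed for Theorem \ref{mainNew} (only for Theorem \ref{main}); invoking it for $s_Y>1$ is unnecessary since the target exponent is capped at $1$ and the Furstenberg route already suffices.
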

In the case $\Hd Y > 1$, we have the following improved lower bound:

\begin{thm}\label{main} Let $X,Y \subset \R^{2}$ be Borel sets with $X \neq \emptyset$ and $\Hd Y > 1$. Then,
\begin{displaymath} \sup_{x \in X} \Hd \pi_{x}(Y \, \setminus \, \{x\}) \geq \min\{\Hd X + \Hd Y - 1,1\}. \end{displaymath}
\end{thm}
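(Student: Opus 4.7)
The plan is to argue by contradiction, discretise the failure hypothesis to a statement about $\delta$-tubes, and then run a bootstrap that combines the Fu--Ren planar incidence estimate with the $\epsilon$-improvement in the Furstenberg set problem due to the first two authors, in the spirit of the bootstrapping scheme of Shmerkin--Wang.

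Fix $s < \Hd X$, $t < \Hd Y$ with $t > 1$, set $\sigma := \min\{s+t-1, 1\}$, and suppose for contradiction that $\sup_{x \in X}\Hd\pi_x(Y \setminus \{x\}) < \sigma - \eta$ for some $\eta > 0$. Frostman's lemma supplies measures $\mu$ and $\nu$ on $X$ and $Y$ of exponents $s$ and $t$ respectively. After the standard reductions (restrict to compact subsets, ensure $\dist(\spt\mu,\spt\nu) > 0$), we obtain a Borel set $X' \subset X$ with $\mu(X') > 0$ such that for every $x \in X'$ and every sufficiently small dyadic scale $\delta$ there is a family $\mathcal{T}_x$ of at most $\delta^{-\sigma + O(\eta)}$ $\delta$-tubes through $x$ covering a uniformly positive fraction of the $\nu$-mass.

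The main technical step is a multi-scale pigeonholing that selects a scale $\delta$ and refinements of $\mu$, $\nu$ and $\{\mathcal{T}_x\}$ with the following combinatorial structure: $\mu$ concentrates on a Katz--Tao $(\delta,s)$-set $P \subset X'$, $\nu$ concentrates on a $(\delta,t)$-set $Q \subset Y$, and the aggregate tube family $\mathcal{T} := \bigsqcup_{x \in P} \mathcal{T}_x$ is a $(\delta, s+\sigma)$-set of tubes with a local $(\delta, \sigma)$-structure at each $x \in P$. In this clean configuration, the covering condition translates, via the Frostman bound $\nu(T) \lesssim \delta^{t-1}$ on individual tubes, into a lower bound on the bipartite incidences $I(Q,\mathcal{T})$ of the order $\delta^{-(s+t-1) + O(\eta)}$.

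The contradiction comes from comparing this against a nearly matching upper bound: the Fu--Ren incidence estimate provides a sharp ceiling on $I(Q,\mathcal{T})$ for the structured tube families above, and the Orponen--Shmerkin $\epsilon$-improvement for $(s,t)$-Furstenberg sets feeds an additional $\delta^{\epsilon}$-gain into this ceiling which is incompatible with the lower bound whenever $\sigma < \min\{s+t-1,1\}$, forcing an inductive improvement $\sigma \mapsto \sigma + \epsilon$. Iterating from a suitable base case along a geometric sequence of scales pushes $\sigma$ up to $\min\{s+t-1,1\}$. The principal obstacle is organising the multi-scale pigeonholing so that the $(\delta, s)$-, $(\delta, t)$-, and $(\delta, \sigma)$-regularities required simultaneously by Fu--Ren and by the Furstenberg $\epsilon$-improvement hold at a common scale, and propagating this regularity through the inductive step without dissipating the $\epsilon$-gain; this is where the most delicate dyadic-scale branching arguments of the Shmerkin--Wang scheme are needed.
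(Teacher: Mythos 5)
Your high-level architecture matches the paper: argue by contradiction, discretise into a statement about $\delta$-tubes, apply the Fu--Ren planar incidence bound to a structured family of tubes through a Frostman-regular set, and iterate the resulting gain via a bootstrapping scheme of Shmerkin--Wang type (Lemma~\ref{lemma7}/Theorem~\ref{mainThinTubes} in the paper). However, you misidentify the Furstenberg-set input, and this is not a cosmetic slip.

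The Orponen--Shmerkin $\epsilon$-improvement for Furstenberg sets (Theorem~\ref{thm:improved-Furstenberg}) is the engine behind Theorem~\ref{mainNew} (via Lemma~\ref{lem: radial2}); it is \emph{not} used in the proof of Theorem~\ref{main}. For Theorem~\ref{main}, the Furstenberg input is the \emph{elementary} bound $\gamma(\sigma,s) = \sigma + \min\{\sigma,s\}$ (Lemmas~\ref{lemma4}--\ref{lemma5}, deduced from \cite[Theorem A.1]{HSY21} via point-line duality), which is needed to certify that the aggregate tube family $\mathcal{T}=\bigcup_x\mathcal{T}_x$ is a genuine $(\delta,\gamma)$-set before Fu--Ren can be invoked. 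This matters because the $\epsilon$-improvement only gives a cardinality lower bound $|\mathcal{T}|_\delta\gtrsim\delta^{-2\sigma-\epsilon}$; it does \emph{not} supply the $(\delta,u)$-set regularity of $\mathcal{T}$ that Theorem~\ref{FuRen} requires as a hypothesis. If you try to feed only the cardinality gain into the Fu--Ren ceiling, the argument does not close. Moreover, the ``$+\eta$'' improvement per bootstrap step in the paper does not come from an extra $\delta^\epsilon$-gain grafted onto the incidence ceiling; it comes from the slack $\sigma < s+t-1-\zeta$ in the one-shot discretised statement Theorem~\ref{main2}. That statement compares the lower bound $|\mathcal{I}(P_K,\mathcal{T})|\ge |P_K||\mathcal{T}|\,\delta^{\sigma+\epsilon}$ (a per-tube mass condition, not the absolute-size estimate $\delta^{-(s+t-1)}$ you wrote) against the Fu--Ren upper bound with exponent $\kappa(s+t-1)$; the useful case is $\kappa=1/2$, and the other cases are shown to be vacuous. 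Replacing the elementary Furstenberg input with the $\epsilon$-improved one, and trying to locate the bootstrap gain there, is therefore a genuine gap, not a reformulation; you should instead prove a statement like Theorem~\ref{main2} (with the case split on $s\lessgtr\sigma$ and $\kappa=1/2$ vs.\ $\kappa=1/(s+t-1)$) and then iterate Lemma~\ref{lemma7} starting from the trivial $(t-1)$-thin-tubes base case.
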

We mention a few corollaries of Theorem \ref{mainNew}. First, it can be applied to the case $Y = X$, assuming that $X$ is a Borel set which does not lie on a line. Then,
\begin{displaymath} \sup_{x \in X} \Hd \pi_{x}(X \, \setminus \, \{x\}) = \min\{\Hd X,1\}. \end{displaymath}
In particular, the \emph{direction set} $S(X) := \{(x - y)/|x - y| : x,y \in X, \, x \neq y\}$ has
\[
\Hd S(X) \geq \min\{\Hd X,1\}.
\]
This solves \cite[Conjecture 1.9]{MR3892404}. These results were earlier proved under the additional assumption that $X$ has equal Hausdorff and packing dimension by the second and third author in \cite[Theorem 1.6]{2021arXiv211209044S}. Weaker bounds on the dimension of $\pi_{x}(X \, \setminus \, \{x\})$ for sets $X$ not contained in a line were previously obtained in \cite{MR3892404, Shmerkin20, LiuShen20}.

As a second application of Theorem \ref{mainNew}, we solve \cite[Conjecture 1.2]{MR4269398} in $\R^{2}$:
\begin{cor}\label{main3} Let $Y \subset \R^{2}$ be a Borel set with $\Hd Y \leq 1$. Then,
\begin{displaymath} \Hd \{x \in \R^{2} \, \setminus \, Y : \Hd \pi_{x}(Y) < \Hd Y\} \leq 1. \end{displaymath}
\end{cor}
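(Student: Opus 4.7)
The plan is a direct reduction to Theorem \ref{mainNew} via contradiction. Set $E := \{x \in \R^{2} \setminus Y : \Hd \pi_{x}(Y) < \Hd Y\}$ and suppose for contradiction that $\Hd E > 1$. I want to feed $E$ (or a suitable subset of it) into Theorem \ref{mainNew} as the set $X$, and derive an inconsistency with the very definition of $E$.

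\textbf{Step 1: quantified exceptional set.} The inequality $\Hd \pi_{x}(Y) < \Hd Y$ is strict but not uniformly so, hence I decompose
\[
E \;=\; \bigcup_{n \geq 1} E_{n}, \qquad E_{n} \;:=\; \{x \in \R^{2} \setminus Y : \Hd \pi_{x}(Y) \leq \Hd Y - 1/n\}.
\]
Countable stability of Hausdorff dimension gives $\Hd E_{n} > 1$ for some $n$. (The case $\Hd Y = 0$ is vacuous since $E$ is then empty, so I may assume $\Hd Y > 0$ and take $n$ so that $\Hd Y - 1/n \geq 0$.)

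\textbf{Step 2: measurable reduction.} Theorem \ref{mainNew} is formulated for Borel sets, and $E_{n}$ need not be Borel a priori. However, $E_{n}$ is analytic under the standing assumption that $Y$ is Borel (since $x \mapsto \Hd \pi_{x}(Y)$ has a Borel structure sufficient for this), and any analytic set of Hausdorff dimension $> 1$ contains a compact subset $K$ with $\Hd K > 1$ by Davies' theorem. Because every line has Hausdorff dimension exactly $1$, such a $K$ is automatically not contained in any line, so it qualifies as an admissible ``$X$'' in Theorem \ref{mainNew}.

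\textbf{Step 3: contradiction.} Apply Theorem \ref{mainNew} with this $X = K$ and the given $Y$. Since $K \subset \R^{2} \setminus Y$, one has $Y \setminus \{x\} = Y$ for every $x \in K$, so $\pi_{x}(Y \setminus \{x\}) = \pi_{x}(Y)$. Using $\Hd K > 1 \geq \Hd Y$,
\[
\sup_{x \in K} \Hd \pi_{x}(Y) \;=\; \sup_{x \in K} \Hd \pi_{x}(Y \setminus \{x\}) \;\geq\; \min\{\Hd K, \Hd Y, 1\} \;=\; \Hd Y.
\]
On the other hand, the defining property of $E_{n} \supset K$ forces
\[
\sup_{x \in K} \Hd \pi_{x}(Y) \;\leq\; \Hd Y - 1/n \;<\; \Hd Y,
\]
a contradiction. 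Hence $\Hd E \leq 1$, as desired.

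The only potentially non-routine ingredient is the measurability/compact-subset extraction in Step 2; all the geometric content is already packaged in Theorem \ref{mainNew}, and the bookkeeping with the $1/n$-gap is the standard device for converting a strict inequality on every point into a uniform quantitative estimate on a subset.
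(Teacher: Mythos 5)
Your proof is correct and follows essentially the same contradiction argument as the paper's: quantify the strict inequality via a countable decomposition into sets $E_n$, note that a set of dimension $>1$ cannot lie on a line, and apply Theorem~\ref{mainNew} to reach a contradiction. Your Step~2 (passing to a compact subset via Davies' theorem to ensure Borel measurability) addresses a point the paper glosses over and is a worthwhile clarification, but the substance of the argument is identical.
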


\begin{proof} Assume to the contrary that there exists $\epsilon > 0$ such that
\begin{displaymath} X = \{x \in \R^{2} : \Hd \pi_{x}(Y \, \setminus \, \{x\}) \leq \Hd Y - \epsilon\} \end{displaymath}
has $\Hd X > 1$. Then $X$ evidently does not lie on a line. Now, Theorem \ref{mainNew} tells us that there exists a point $x \in X$ such that $\Hd \pi_{x}(Y \, \setminus \, \{x\}) > \Hd Y - \epsilon$, a contradiction. \end{proof}

Finally, Theorem \ref{mainNew} yields a continuum version of \emph{Beck's theorem} \cite{MR729781} from 1983. The original version states that if $P \subset \R^{2}$ is a finite set of points, then either $\gtrsim |P|$ points lie on a single line, or else $P$ spans $\gtrsim |P|^{2}$ distinct lines. This is a simple consequence of the Szemer\'edi-Trotter incidence bound \cite{MR729791}. The continuum version is the following:

\begin{cor} Let $X \subset \R^{2}$ be a Borel set such that $\Hd (X \, \setminus \, \ell) = \Hd X$ for all lines $\ell \subset \R^{2}$. Then, the line set $\mathcal{L}(X)$ spanned by pairs of (distinct) points in $X$ satisfies
\begin{displaymath} \Hd \mathcal{L}(X) \geq \min\{2\Hd X,2\}. \end{displaymath}
\end{cor}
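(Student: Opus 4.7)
Let me write $s := \Hd X$ and $\sigma := \min\{s,1\}$; the goal is $\Hd \mathcal{L}(X) \geq 2\sigma$ when $s \leq 1$, and $\Hd \mathcal{L}(X) \geq 2$ when $s > 1$. The strategy is to use Theorem~\ref{mainNew} together with the spanning hypothesis to pin down a large subset $G \subseteq X$ on which radial projections are uniformly large, and then to convert this into a two-parameter family in $\mathcal{L}(X)$ via an incidence-type argument.

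The first step is to upgrade the $\sup$-statement of Theorem~\ref{mainNew} into a statement about a full-dimensional \emph{good set}. For $\tau < \sigma$, define $G_\tau := \{x \in X : \Hd \pi_x(X \setminus \{x\}) \geq \tau\}$; I would show $\Hd G_\tau = s$. Suppose to the contrary that $\Hd G_\tau < s$, so that $\Hd(X \setminus G_\tau) = s$. If $X \setminus G_\tau$ is not contained on any line, then Theorem~\ref{mainNew} applied with $X' = X \setminus G_\tau$ and $Y = X$ produces a point $x \in X \setminus G_\tau$ with $\Hd \pi_x(X \setminus \{x\}) \geq \min\{s,s,1\} = \sigma > \tau$, contradicting the definition of $G_\tau^c$. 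If instead $X \setminus G_\tau \subseteq \ell_0$ for some line $\ell_0$, then $X \setminus \ell_0 \subseteq G_\tau$, and the spanning hypothesis forces $\Hd G_\tau \geq \Hd(X \setminus \ell_0) = s$, a contradiction in the other direction.

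Next I would form the incidence set
\[
J_\tau := \{(x,\theta) \in G_\tau \times S^1 : \theta \in \pi_x(X \setminus \{x\})\},
\]
whose slice over each $x \in G_\tau$ has dimension at least $\tau$. A standard Fubini-type lower bound for Hausdorff dimension (Marstrand/Howroyd) then yields $\Hd J_\tau \geq s + \tau$. The map $F \colon J_\tau \to \mathcal{L}(X)$ sending $(x,\theta)$ to the line through $x$ in direction $\theta$ is locally Lipschitz, and its fibre above a line $\ell \in \mathcal{L}(X)$ is identified with $\ell \cap G_\tau$, a subset of a single line. The plain slicing inequality $\Hd \mathcal{L}(X) \geq \Hd J_\tau - \sup_\ell \Hd(\ell \cap G_\tau) \geq s + \tau - \sigma$ is too weak. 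Instead I would transport a Frostman measure $\mu$ on $J_\tau$ of exponent close to $s+\tau$ along $F$ and estimate the $t$-energy of $F_*\mu$ on $\mathcal{L}(X)$; the key input is that by Marstrand slicing applied to $G_\tau$ (of dimension $s \leq 1$), the typical fibre $\ell \cap G_\tau$ is only $\max\{s-1,0\} = 0$-dimensional. Letting $\tau \uparrow \sigma$ would yield $\Hd \mathcal{L}(X) \geq s + \sigma = 2s$ for $s \leq 1$. The case $s > 1$ proceeds in parallel after appealing to Theorem~\ref{main}: the improved lower bound there gives $\Hd \pi_x(X \setminus \{x\}) \geq 1$ on a dim-$s$ subset of $X$ (the same Corollary~\ref{main3}-style argument), and the analogous energy calculation delivers $\Hd \mathcal{L}(X) \geq 2$.

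The main obstacle is precisely this energy estimate: worst-case control of the fibres $\ell \cap G_\tau$ is too costly, and its replacement by the typical fibre dimension requires a quantitative form of Marstrand slicing, or equivalently a planar incidence bound of the Fu--Ren flavour cited in the abstract. I expect this to be the point at which the deeper machinery developed earlier in the paper must be invoked; the remainder of the argument, including the reduction to the good set $G_\tau$ and the assembly of the two-parameter family, is essentially formal once such an incidence/energy estimate is available.
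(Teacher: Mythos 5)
Your first step is essentially the paper's: you identify a good set $G_\tau$ with $\Hd G_\tau = \Hd X$ by showing that the complement either triggers Theorem~\ref{mainNew} (if not collinear) or the spanning hypothesis (if collinear), and then you let $\tau \uparrow \sigma$. This is correct, and the $\tau < \sigma$ parametrization you use is in fact a cleaner way to express what the paper does with the exceptional set $B$.

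The genuine gap is in the second step. After step 1, for every $x \in G_\tau$ the pencil $\mathcal{L}_x := \{\ell \in \mathcal{L}(X) : x \in \ell\}$ satisfies $\Hd \mathcal{L}_x \geq \tau$, so under point--line duality $\bigcup_x \mathcal{L}_x \subset \mathcal{L}(X)$ is a $(\tau, \Hd X)$-Furstenberg set of lines. The paper simply invokes the Furstenberg set lower bound $\gamma(s,t) = s + \min\{s,t\}$ from \cite[Theorem A.1]{HSY21}, giving $\Hd \mathcal{L}(X) \geq 2\tau \to 2\sigma = \min\{2\Hd X, 2\}$. You build the same geometric object (your $J_\tau$ and the map $F$ with $F(J_\tau) = \bigcup_x \mathcal{L}_x$), but propose to bound $\Hd F(J_\tau)$ by transporting a Frostman measure from $J_\tau$ and controlling the fibres $\ell \cap G_\tau$ via the ``typical'' Marstrand slicing bound. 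This mechanism does not work as stated: Marstrand's slicing theorem controls slices along Haar-almost every line in $\mathcal{A}(2,1)$, but the image measure $F_*\mu$ lives on a fractal, possibly highly non-generic family of lines, and nothing forbids its typical fibre from being as large as $\min\{\Hd G_\tau, 1\}$. If you \emph{did} have such fibre control, you would have essentially re-proven the Furstenberg estimate from scratch, since an $(s,s)$-Furstenberg set of Hausdorff dimension close to $s$ is precisely a configuration where the fibres of $F$ are large on the relevant measure. In short, the Fubini lower bound $\Hd J_\tau \geq \Hd G_\tau + \tau$ throws away exactly the structure that the Furstenberg bound exploits. The missing black box is the H\'era--Shmerkin--Yavicoli Furstenberg bound, not a slicing or Fu--Ren-type incidence estimate (Fu--Ren enters only in the proof of Theorem~\ref{main}, which is not needed for this corollary).
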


\begin{proof} Let $B := \{x \in X : \Hd \pi_{x}(X \, \setminus \, \{x\}) < \min\{\Hd X,1\}\}$. We claim that
\begin{equation}\label{form70} \Hd (X \, \setminus \, B) = \Hd X. \end{equation}
In fact, this will complete the proof: by definition, for each $x \in X \, \setminus \, B$ there exists a line set $\mathcal{L}_{x} \subset \mathcal{L}(X)$ which contains $x$ and has $\Hd \mathcal{L}_{x} = \min\{\Hd X,1\} =: s$. Therefore
\begin{displaymath} \mathcal{L}(X) \supset \bigcup_{x \in X \, \setminus \, B} \mathcal{L}_{x} \end{displaymath}
contains a (dual) $(s,s)$-Furstenberg set, and thus $\Hd \mathcal{L}(X) \geq 2s$, see \cite[Theorem A.1]{HSY21}.

Let us prove \eqref{form70}. If \eqref{form70} fails, then clearly $\Hd B = \Hd X$. Then $B$ must lie on a line, say $B \subset \ell$. Otherwise Theorem \ref{mainNew} tells us that there exists a point $x \in B$ such that $\Hd \pi_{x}(X \, \setminus \, \{x\}) = \min\{\Hd X,1\}$, contrary to the definition of $B$. But now $\Hd (X \, \setminus \, B) \geq \Hd (X \, \setminus \, \ell) = \Hd X$ by assumption, so actually \eqref{form70} holds. \end{proof}

We then turn to Theorem \ref{main}. It solves the planar case of \cite[Conjecture 1.2]{2022arXiv220507431L}. In fact, the numerology may be more recognisable if we restate Theorem \ref{main} as follows:
\begin{cor}\label{mainRestatement} Let $Y \subset \R^{2}$ be a Borel set with $\Hd Y > 1$. Then,
\begin{displaymath} \Hd \{x \in \R^{2} : \Hd \pi_{x}(Y \, \setminus \, \{x\}) < \sigma\} \leq \max\{1 + \sigma - \Hd Y,0\}, \qquad 0 \leq \sigma \leq 1. \end{displaymath}
\end{cor}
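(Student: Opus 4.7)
The plan is to deduce the corollary from Theorem~\ref{main} by a direct contrapositive, with a short limiting argument for the endpoint $\sigma = 1$. Write $E_\sigma := \{x \in \R^{2} : \Hd \pi_{x}(Y \setminus \{x\}) < \sigma\}$; the claim is that $\Hd E_\sigma \leq t(\sigma) := \max\{1 + \sigma - \Hd Y,\, 0\}$.

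First I would handle the main case $\sigma \in [0,1)$. Assume toward contradiction that $\Hd E_\sigma > t(\sigma)$. A short descriptive-set-theoretic argument, exploiting Borelness of $Y$ together with continuity of $(x,y) \mapsto \pi_{x}(y)$, shows that $E_\sigma$ is at least analytic, so by inner regularity I may select a Borel (in fact compact) subset $X \subset E_\sigma$ with $\Hd X > 1 + \sigma - \Hd Y$; in particular $X \neq \emptyset$. Applying Theorem~\ref{main} to this $X$ and the given $Y$ gives
\[
\sup_{x \in X} \Hd \pi_{x}(Y \setminus \{x\}) \geq \min\{\Hd X + \Hd Y - 1,\, 1\}.
\]
Both arguments of the minimum exceed $\sigma$: the first because $\Hd X + \Hd Y - 1 > \sigma$ by the choice of $X$, and the second because $\sigma < 1$. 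Hence the supremum is strictly greater than $\sigma$, contradicting the fact that $\Hd \pi_{x}(Y \setminus \{x\}) < \sigma$ for every $x \in X \subset E_\sigma$.

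For the endpoint $\sigma = 1$, I would note $E_{1} = \bigcup_{n \geq 2} E_{1 - 1/n}$ and use countable stability of Hausdorff dimension together with the already-proven $\sigma < 1$ case:
\[
\Hd E_{1} = \sup_{n \geq 2} \Hd E_{1 - 1/n} \leq \sup_{n \geq 2} \max\{2 - 1/n - \Hd Y,\, 0\} = \max\{2 - \Hd Y,\, 0\} = t(1).
\]

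The substantive mathematical content is entirely in Theorem~\ref{main}, and the reduction above is almost formal. The only potentially delicate point is the measurability of $E_\sigma$, needed to extract a Borel subset of the required dimension before invoking Theorem~\ref{main}; this is standard but worth recording carefully, and it is the only place where a bit of care is required.
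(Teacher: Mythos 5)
Your proof follows the same contrapositive scheme as the paper's own argument (apply Theorem~\ref{main} to the exceptional set and derive a contradiction), so the mathematical core is identical. Two remarks on the details are worth making.

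First, your separate treatment of the endpoint $\sigma = 1$ via the monotone union $E_{1} = \bigcup_n E_{1-1/n}$ and countable stability of Hausdorff dimension is a genuine improvement on the paper's write-up. The paper's chain of inequalities ends with $\sup_{x\in X}\Hd\pi_x(Y\setminus\{x\})\geq \min\{\Hd X+\Hd Y-1,1\}\geq\sigma$, and declares a contradiction; but a supremum of quantities each strictly below $\sigma$ can equal $\sigma$, so a contradiction only arises when the final inequality is strict. When $\sigma<1$ it is strict (both $\Hd X+\Hd Y-1>\sigma$ and $1>\sigma$), but at $\sigma=1$ one only gets $\sup=1$, which is consistent with each individual value being $<1$. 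Your limiting argument closes that small gap cleanly.

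Second, the measurability caveat. You flag that one needs to extract a Borel subset of $E_\sigma$ before invoking Theorem~\ref{main}, and you appeal to "a short descriptive-set-theoretic argument" to show $E_\sigma$ is analytic. This is a legitimate concern — the paper itself simply applies Theorem~\ref{main} to the exceptional set without comment — but it is not really a \emph{short} argument: measurability of $x\mapsto \Hd\pi_x(Y\setminus\{x\})$ for Borel $Y$ rests on nontrivial results about the Borel measurability of Hausdorff dimension as a function on fibers (in the spirit of Mattila--Mauldin). If you want to avoid that machinery entirely, an alternative is to argue at the level of measures: if $\Hd E_\sigma > 1+\sigma-\Hd Y$, pick $s$ strictly in between, observe $\mathcal{H}^s_\infty(E_\sigma)>0$, and apply Theorem~\ref{mainThinTubes} (the quantitative measure-theoretic version of Theorem~\ref{main}) directly to suitable Frostman measures; this sidesteps needing a Borel subset of $E_\sigma$. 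In any case your proof is correct modulo this standard point, and it is the same point the paper leaves implicit.
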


\begin{proof} Assume to the contrary that the exceptional set "$X$" on the left has dimension $\Hd X > \max\{1 + \sigma - \Hd Y,0\}$. In particular $X \neq \emptyset$. Then, by Theorem \ref{main},
\begin{displaymath} \sup_{x \in X} \Hd \pi_{x}(Y \, \setminus \, \{x\}) \geq \min\{\Hd X + \Hd Y - 1,1\} \geq \sigma, \end{displaymath}
which is a contradiction.
\end{proof}

As is often the case, we deduce our main results from corresponding quantitative, discretised versions: they are Corollary \ref{cor:radial} for Theorem \ref{mainNew} (see also Corollary \ref{cor:radial2}), and Theorem  \ref{mainThinTubes}  for  Theorem \ref{main}.

\subsection{Related work and higher dimensions}\label{s:relatedWork} Proving Corollary \ref{main3} does not require the full strength of Theorem \ref{mainNew}. In fact, it could also be deduced from (a quantitative version) of Theorem \ref{main}, combined with a "swapping trick" introduced by Liu in \cite{MR4269398}. The reason is partially visible from the proof of Corollary \ref{main3}: one is allowed to make the counter assumption that the "exceptional set" $X$ has $\Hd X > 1$, and this simplifies matters (the hardest case of Theorem \ref{mainNew} occurs when $\Hd X,\Hd Y \leq 1$). This approach of deducing Corollary \ref{main3} from Theorem \ref{main} is carried out in a prequel to this paper \cite{2022arXiv220513890O} (where a proof of Theorem \ref{main} first appeared - the proof in the current paper is conceptually similar, but the technical implementation is simplified). Thus, at the level of appropriate quantitative versions of the statements, both Theorems \ref{main} and \ref{mainNew} imply Corollary \ref{main3}, but neither of these theorems imply each other (as far as we know).

After \cite{2022arXiv220513890O} appeared on the \emph{arXiv}, Dote and Gan \cite{2022arXiv220803597D} proved a higher dimensional version of Theorem \ref{main}, and then used the "swapping trick" to obtain a higher dimensional counterpart of Corollary \ref{main3}. Their results are the following:
\begin{thm}[Dote-Gan]\label{ganDote1} Let $Y \subset \R^{d}$ be a Borel set with $\Hd Y \in (k,k + 1]$ for some $k \in \{1,\ldots,d - 1\}$. Then,
\begin{displaymath} \Hd \{x \in \R^{d} : \Hd \pi_{x}(Y \, \setminus \, \{x\}) < \sigma\} \leq \max\{k + \sigma - \Hd Y\}, \qquad 0 \leq \sigma \leq k. \end{displaymath}
 \end{thm}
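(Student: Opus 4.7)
The plan is to derive Theorem \ref{ganDote1} from a higher-dimensional analogue of Theorem \ref{main}, namely the claim that \emph{if $X, Y \subset \R^d$ are non-empty Borel sets with $\Hd Y > k$, then}
\begin{displaymath}
\sup_{x \in X} \Hd \pi_x(Y \setminus \{x\}) \;\geq\; \min\{\Hd X + \Hd Y - k, \, k\}.
\end{displaymath}
Granting this, Theorem \ref{ganDote1} follows by the same contradiction argument used to deduce Corollary \ref{mainRestatement} from Theorem \ref{main}: assuming the exceptional set $E_\sigma := \{x \in \R^d : \Hd \pi_x(Y \setminus \{x\}) < \sigma\}$ has $\Hd E_\sigma > k + \sigma - \Hd Y$, one obtains $\Hd E_\sigma + \Hd Y - k > \sigma$; since $\sigma \leq k$, applying the higher-dimensional bound with $X = E_\sigma$ yields some $x \in E_\sigma$ with $\Hd \pi_x(Y \setminus \{x\}) > \sigma$, contradicting the definition of $E_\sigma$.

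For the higher-dimensional radial projection bound, I would use integral geometry over the Grassmannian $G(d, k+1)$. The key identity is that for any $(k+1)$-dimensional linear subspace $V \subset \R^d$, orthogonal projection $\rho_V : \R^d \to V$ intertwines the ambient radial projection with the one inside $V$ via the normalized projection $\tilde\rho_V : S^{d-1} \setminus V^\perp \to V \cap S^{d-1}$, $\tilde\rho_V(\theta) := \rho_V(\theta)/|\rho_V(\theta)|$, namely
\begin{displaymath}
\pi^V_{\rho_V(x)}(\rho_V(y)) \;=\; \tilde\rho_V(\pi_x(y)), \qquad y \neq x,
\end{displaymath}
where $\pi^V_\xi$ denotes radial projection inside $V$ from the point $\xi \in V$. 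Since $\tilde\rho_V$ is locally Lipschitz off its singular set $V^\perp$, this forces $\Hd \pi_x(Y \setminus \{x\}) \geq \Hd \pi^V_{\rho_V(x)}(\rho_V(Y))$ for every such $V$. Combined with Mattila's projection theorem, which for a.e.~$V \in G(d, k+1)$ gives $\Hd \rho_V(Y) = \min\{\Hd Y, k+1\} > k$ and $\Hd \rho_V(X) = \min\{\Hd X, k+1\}$, a judicious choice of $V$ reduces the claim in $\R^d$ to its analogue in $\R^{k+1}$; iterating this Grassmannian reduction, we descend to the base case $d = k+1$.

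The main obstacle is the base case $d = k+1$, which the Grassmannian projection argument cannot break. For $k = 1$ this is exactly Theorem \ref{main}; for $k \geq 2$ a genuinely new radial projection estimate in $\R^{k+1}$ is required, and this is where the bulk of the work sits. One natural route is to descend further by slicing with a generic hyperplane in $\R^{k+1}$, reducing the pair $(k+1, k)$ to $(k, k-1)$ via Marstrand's slice theorem (which preserves the condition $\Hd Y > k$ as $\Hd(Y \cap H) > k - 1$ for generic $H$), and chaining the base cases $(2,1) \Rightarrow (3,2) \Rightarrow (4,3) \Rightarrow \cdots$ up the ladder. Managing the Fubini/Grassmannian bookkeeping so that the good slicing hyperplane is simultaneously good for $X$, for $Y$, and for at least one $x \in X$ realizing the supremum in the lower-dimensional bound is the principal technical difficulty. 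Alternatively, and presumably closer to the actual Dote--Gan argument, one can prove the $\R^{k+1}$ radial projection bound in one shot by a fibered extension of the planar techniques of this paper (the Fu--Ren incidence bound together with the Furstenberg $\epsilon$-improvement), thereby bypassing the slicing ladder altogether.
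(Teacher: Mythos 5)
Your first reduction — projecting onto generic $(k+1)$-planes $V \in \mathcal{G}(d,k+1)$ via the intertwining identity $\pi^V_{\rho_V(x)}(\rho_V(y)) = \tilde\rho_V(\pi_x(y))$ and the Marstrand–Mattila projection theorem, so as to descend from $\R^d$ to $\R^{k+1}$ — is exactly what the paper does (their inclusion \eqref{form64} and the first Proposition of Section \ref{s:higherDim}), and the contradiction argument deducing the exceptional-set bound from a lower bound on $\sup_x \Hd\pi_x(Y)$ is also the same. So the proposal is on track up to the point where you arrive at the base case $d = k+1$.

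The gap is in that base case. You correctly observe that for $k = 1$ this is Theorem \ref{main}, and that for $k \geq 2$ something genuinely new is required; but the two routes you then sketch — an inductive hyperplane-slicing ladder $(k+1,k) \Rightarrow (k,k-1) \Rightarrow \cdots$, or a ``fibered extension'' of the planar Fu–Ren/Furstenberg argument — are not what the paper uses, and the first one has precisely the unresolved difficulty you name. Slicing $Y$ by a hyperplane $H$ reduces $\Hd Y$ by $1$, but it is not clear how to relate $\pi_x(Y\setminus\{x\}) \subset S^k$ to the radial projections of the slices $Y \cap H$ inside $S^{k-1}$, nor how to choose one hyperplane that is simultaneously generic for $X$, for $Y$, and for the point $x$ realizing the lower-dimensional supremum. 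Working at the level of Hausdorff dimension makes this Fubini bookkeeping essentially intractable.

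The missing tool is the \emph{thin tubes} framework and, specifically, the lifting result Theorem \ref{thm:thin-tubes} (from \cite[Theorem 6.7]{2021arXiv211209044S}): if the sliced measures $(\mu_{W,z},\nu_{W,z})$ on affine $(d-k)$-planes $W+z$ have $t$-thin tubes for $(\gamma_{d,d-k}\times\mu)$-a.e.\ $(W,z)$, then the ambient pair $(\mu,\nu)$ has $(k+t)$-thin tubes. This allows the paper to slice directly with $2$-planes $V \in \mathcal{G}(d,2)$ in a single step (no ladder): the Marstrand–Mattila slicing theorem gives $I_{s-(d-2)}(\mu_{V,x}) < \infty$ and $I_{t-(d-2)}(\nu_{V,x}) < \infty$ for a.e.\ $(x,V)$; the planar Theorem \ref{mainThinTubes} then gives $\sigma'$-thin tubes for the sliced measures with $\sigma' < \min\{[s-(d-2)] + [t-(d-2)] - 1, 1\}$; and Theorem \ref{thm:thin-tubes} with $k = d-2$ lifts this to $(d-2+\sigma')$-thin tubes for $(\mu,\nu)$, which is exactly the exponent $\min\{s+t-(d-1), d-1\}$ you need. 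Working with measures and tube conditions (rather than dimensions of sets) is what makes the ``simultaneously generic for everything'' issue disappear: the genericity is quantified over $\mu \times \gamma_{d,2}$, and the lifting theorem absorbs it. Without identifying this lemma, the base case remains open in your proposal, so as written there is a genuine gap even though the skeleton is right.
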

\begin{thm}[Dote-Gan]\label{ganDote2} Let $Y \subset \R^{d}$ be a Borel set with $\Hd Y \in (k - 1,k]$ for some $k \in \{1,\ldots,d - 1\}$. Then,
\begin{displaymath} \Hd \{x \in \R^{d} : \Hd \pi_{x}(Y \, \setminus \, \{x\}) < \Hd Y\} \leq k. \end{displaymath} \end{thm}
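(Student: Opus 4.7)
The plan is to argue by contradiction, applying Theorem \ref{ganDote1} with the roles of source and target swapped. This is an instance of Liu's ``swapping trick'' from \cite{MR4269398}, which in the planar case already turns Theorem \ref{main} into Corollary \ref{main3}.

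Suppose, for contradiction, that $E := \{x \in \R^{d} : \Hd \pi_{x}(Y \setminus \{x\}) < \Hd Y\}$ has $\Hd E > k$. Passing to a Borel subset, I may assume $\Hd E \in (k, k+1]$. A pigeonholing over the countable exhaustion by the sets $\{x : \Hd \pi_{x}(Y \setminus \{x\}) \leq \Hd Y - 1/n\}$, $n \in \N$, then produces an $\epsilon > 0$ and a further Borel refinement, still denoted $E$, that satisfies $\Hd E \in (k, k+1]$ together with the uniform bound
\[
\Hd \pi_{x}(Y \setminus \{x\}) \leq \Hd Y - \epsilon \qquad \text{for every } x \in E.
\]

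Next, I apply Theorem \ref{ganDote1} with $E$ in the role of $Y$. Taking the parameter $\sigma := \Hd Y - \epsilon/2 \in [0, k]$ (valid because $\Hd Y \leq k$), the theorem yields
\[
\Hd\bigl\{y \in \R^{d} : \Hd \pi_{y}(E \setminus \{y\}) < \Hd Y - \tfrac{\epsilon}{2}\bigr\} \leq \max\bigl\{k + (\Hd Y - \tfrac{\epsilon}{2}) - \Hd E,\,0\bigr\} < \Hd Y,
\]
where the last strict inequality uses $\Hd E > k$. Consequently, the set
\[
Y^{\ast} := \{y \in Y : \Hd \pi_{y}(E \setminus \{y\}) \geq \Hd Y - \epsilon/2\}
\]
has $\Hd Y^{\ast} = \Hd Y$.

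It remains --- and this is the main technical obstacle --- to derive a contradiction from the two quantitatively incompatible facts: (a) every $x \in E$ sees $Y$ through a direction set of Hausdorff dimension at most $\Hd Y - \epsilon$, and (b) every $y \in Y^{\ast}$ sees $E$ through a direction set of Hausdorff dimension at least $\Hd Y - \epsilon/2$. Since $\pi_{x}(y) = -\pi_{y}(x)$, statements (a) and (b) are two bookkeepings of the \emph{same} incidence structure between $E$ and $Y^{\ast}$. I would formalise the clash by fixing Frostman measures on $E$ and on $Y^{\ast}$, discretising at a common scale $\delta > 0$, and bounding the number of $\delta$-tubes meeting both sets from above through (a) and from below through (b); calibrating the two estimates against one another should force the gap between $\Hd Y - \epsilon$ and $\Hd Y - \epsilon/2$ to produce a numerical contradiction. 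Converting the Hausdorff-dimension bounds into usable $\delta$-covering numbers requires the quantitative, $\delta$-discretised form of Theorem \ref{ganDote1} implicit in the Dote--Gan argument, and the hypothesis $\Hd Y > k-1$ enters precisely at this stage through the non-concentration of the Frostman measure on $Y^{\ast}$ inside low-dimensional neighbourhoods of tubes.
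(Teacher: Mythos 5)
Your proof takes a genuinely different route from the paper's. The paper first reduces Theorem~\ref{ganDote2} to the codimension-one case $k=d-1$ by an integralgeometric argument (projecting to a generic hyperplane, using \eqref{form64}), and then handles that case by slicing Frostman measures with generic $2$-planes via the Marstrand--Mattila machinery, applying the planar Corollary~\ref{cor:radial2} to the sliced measures, and lifting back with Theorem~\ref{thm:thin-tubes}. You instead apply Theorem~\ref{ganDote1} with the exceptional set in the role of $Y$ and invoke Liu's ``swapping trick.'' This is exactly the Dote--Gan route described in Section~\ref{s:relatedWork}, so it is a valid alternative strategy in principle, and Steps~1--4 of your write-up (the refinement producing a uniform $\epsilon$, the dimension count showing $\Hd Y^{\ast}=\Hd Y$) are correct.

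However, Step~5 contains a genuine gap, and it is the step that carries all the difficulty. The facts (a) ``$\Hd\pi_x(Y)\le\Hd Y-\epsilon$ for all $x\in E$'' and (b) ``$\Hd\pi_y(E)\ge\Hd Y-\epsilon/2$ for all $y\in Y^{\ast}$'' are \emph{not} two incompatible bookkeepings of the same quantity, and they do not clash on their own. The identity $\pi_x(y)=-\pi_y(x)$ only tells you that the \emph{unions} $\bigcup_{x\in E}\pi_x(Y^{\ast})$ and $\bigcup_{y\in Y^{\ast}}\pi_y(E)$ are antipodal sets; it says nothing comparable about the individual fibres $\pi_x(Y^{\ast})$ versus $\pi_y(E)$. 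In fact, (a) and (b) can coexist peacefully: if, say, $E$ is $2$-dimensional and $Y^{\ast}$ is $s$-dimensional with $s<1$, then $\pi_y(E)$ is a full-dimensional arc for every $y$ while $\pi_x(Y^{\ast})$ remains $s$-dimensional for every $x$, and the union $\bigcup_x\pi_x(Y^{\ast})$ simply sweeps out a large set as $x$ varies. So the ``numerical contradiction'' you hope to force needs a substantive additional input. In the actual swapping trick this additional input is a quantitative incidence bound between the Frostman measures on $E$ and $Y$ (e.g.\ a thin-tubes statement applied on the right side), together with a careful account of the multiplicity with which a single tube is counted from the two sides; nothing in your sketch supplies that.

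A further issue you correctly flag but do not resolve is that a bound on Hausdorff dimension (such as (a)) does not yield a single-scale $\delta$-covering bound: low Hausdorff dimension only produces efficient covers by balls of \emph{varying} radii. Replacing (a)--(b) by the quantitative, thin-tubes/Frostman versions is not a cosmetic conversion --- it is where the entire argument has to be re-run, and it is also the place where the hypothesis $\Hd Y>k-1$ must enter (your attribution of that hypothesis to ``non-concentration of the Frostman measure on $Y^{\ast}$'' is not substantiated). As written, then, the proof is not complete: the idea of swapping and invoking Theorem~\ref{ganDote1} is sound, but the derivation of a contradiction is missing in a way that cannot be filled by a line or two.
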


We give new proofs for Theorems \ref{ganDote1}-\ref{ganDote2} at the end of this paper, Section \ref{s:higherDim}. In fact, both statements can be deduced from (quantitative versions of) their planar cases via an integralgeometric argument (due to the second and third author in \cite[Theorem 6.7]{2021arXiv211209044S}). Moreover, we are able to prove a partial higher-dimensional analogue of Theorem \ref{mainNew} that implies Theorem \ref{ganDote2} as a by-product:
\begin{thm}\label{mainHighDim}
Let $X,Y\subset\R^d$, $d\ge 2$, be Borel sets with $\Hd X > k - 1$ and $\Hd Y \in (k - 1,k]$ for some $k\in\{1,\ldots,d-1\}$.
\begin{enumerate}
\item[\textup{(i)}] If $\Hd X > k$, then $\sup_{x \in X} \Hd \pi_{x}(Y \, \setminus \, \{x\}) = \Hd Y$.
\item[\textup{(ii)}] If $k - 1 < \Hd X \leq k$, but $X$ is not contained on any $k$-plane, the following holds. If $\Hd Y > k - 1/k - \eta$ for a sufficiently small constant $\eta = \eta(d,k,\Hd X) > 0$, then
\begin{displaymath} \sup_{x \in X} \Hd \pi_{x}(Y \, \setminus \, \{x\}) \geq \min\{\Hd X,\Hd Y\}. \end{displaymath}
\end{enumerate}
For $k = 1$, we require no lower bound from $\Hd Y$ in part \textup{(ii)}.
\end{thm}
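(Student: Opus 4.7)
The plan splits cleanly into the two parts of the statement, following \cite[Theorem 6.7]{2021arXiv211209044S} in spirit. For part (i), set $E := \{x \in \R^{d} : \Hd \pi_{x}(Y \, \setminus \, \{x\}) < \Hd Y\}$; Theorem \ref{ganDote2} yields $\Hd E \leq k$, so the assumption $\Hd X > k$ forces $X \not\subset E$, and hence some $x \in X$ has $\Hd \pi_{x}(Y \, \setminus \, \{x\}) \geq \Hd Y$. The reverse inequality is immediate from the local Lipschitz character of $\pi_{x}$ on subsets of $Y$ bounded away from $x$, so (i) holds with equality.

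For part (ii) with $k = 1$, I would project to a 2-plane. Here $\Hd X, \Hd Y \leq 1$ and $X$ is not contained in any line of $\R^{d}$. For a.e.~$V \in \mathrm{Gr}(d, 2)$, Marstrand's projection theorem yields $\Hd P_{V}(X) = \Hd X$, $\Hd P_{V}(Y) = \Hd Y$, and a short argument shows that $P_{V}(X)$ is still not contained in any line of $V$. Theorem \ref{mainNew} applied inside $V$ produces $x' \in P_{V}(X)$ with $\Hd \pi_{x'}(P_{V}(Y) \setminus \{x'\}) \geq \min\{\Hd X, \Hd Y\}$. For any $x \in X$ with $P_{V}(x) = x'$, the pointwise identity $\pi_{x'} \circ P_{V} = (P_{V} \circ \pi_{x})/|P_{V} \circ \pi_{x}|$ exhibits $\pi_{x'}(P_{V}(Y \, \setminus \, \{x\}))$ as the image of $\pi_{x}(Y \, \setminus \, \{x\})$ under a locally Lipschitz map $S^{d-1} \setminus V^{\perp} \to V \cap S^{d-1}$, yielding $\Hd \pi_{x}(Y \, \setminus \, \{x\}) \geq \min\{\Hd X, \Hd Y\}$.

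For part (ii) with $k \geq 2$, direct projection would cap the output at $1$, so I would employ an integralgeometric slicing argument instead. The plan is to pass, via Marstrand slicing, to an intermediate affine $(k + 1)$-plane $H$ through a point $x \in X$: the hypothesis $\Hd X > k - 1$ ensures that $\Hd(X \cap H)$ stays close to $\Hd X$ generically, the "not on a $k$-plane" hypothesis passes to $X \cap H$ not being contained in any hyperplane of $H$, and $\Hd Y > k - 1/k - \eta$ ensures that $\Hd(Y \cap H)$ remains close to $\Hd Y$. Inside $H \cong \R^{k + 1}$, one then combines the planar Theorems \ref{mainNew} and \ref{main} (after a further 2-plane reduction, or recursively via the lower-dimensional cases of the present theorem) to deduce the bound. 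The main technical obstacle is calibrating the Marstrand losses incurred at each slicing step together with the planar-theorem threshold $\Hd Y > 1$ required by Theorem \ref{main}; the hypothesis $\Hd Y > k - 1/k - \eta$ with $\eta = \eta(d, k, \Hd X) > 0$ is exactly what ensures the bookkeeping closes, preventing the slicing losses from consuming the gain of the planar estimates.
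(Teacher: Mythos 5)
Your part (i) is circular as written: Theorem \ref{ganDote2} \emph{is} part (i) --- the paper itself notes that ``Part (i) is just a restatement of Theorem \ref{ganDote2}'' --- so invoking it as the proof does not give a proof of part (i); the whole point of Section \ref{s:higherDim} is to give an independent proof of the Dote--Gan estimates. (That said, the converse direction you supply, that the supremum cannot exceed $\Hd Y$ by local bi-Lipschitzness, is correct and is a point the paper leaves implicit.)

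Your part (ii) with $k=1$ is essentially the paper's reduction step: project by $P_V$ for generic $V\in\mathcal{G}(d,2)$, use Marstrand's projection theorem to preserve the dimensions of $X$ and $Y$, check $P_V(X)$ avoids any single line, and use the inclusion $\pi_{P_V(x)}(P_V(Y))\subset\Sigma(P_V(\pi_x(Y)))$ together with the local Lipschitz character of $\Sigma\circ P_V$ on $S^{d-1}\setminus V^\perp$ to transfer the planar lower bound. This matches the paper's argument once one iterates the codimension-one reduction down to $d=2$.

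Your part (ii) with $k\ge 2$ has genuine gaps. First, the dimension bookkeeping for slicing is wrong: for a generic affine $(k+1)$-plane $H$, the Marstrand--Mattila slicing theorem gives $\Hd(X\cap H)\approx\Hd X-(d-k-1)$, i.e.\ dimension drops by the codimension $d-k-1$ of $H$, it does not ``stay close to $\Hd X$''. In fact the paper does \emph{not} slice directly by $(k+1)$-planes in $\R^d$; it first \emph{projects} iteratively to reach the one-codimensional case $k=d-1$ (where projections by Marstrand's projection theorem do preserve $\Hd X,\Hd Y$ as long as these are $<d$), and only then slices by $2$-planes, losing exactly $d-2$ in each of the two dimensions. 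Second, even if you could arrange the slices correctly, you give no mechanism to convert a statement on generic slices back into a statement about $X,Y$ in $\R^d$; this is precisely what Theorem \ref{thm:thin-tubes} (the ``thin tubes upgrade'' from \cite[Theorem 6.7]{2021arXiv211209044S}) does, and omitting it leaves a hole in the argument. Third, you misidentify the role of the threshold $\Hd Y>k-1/k-\eta$: it has nothing to do with preserving dimension through slicing, and instead it is precisely the hypothesis of Proposition \ref{ShWaProp2}, which guarantees that the sliced measures $\nu_{V,x}$ (and $\mu_{V,x}$) do not concentrate on lines in the slice $V+x$, which in turn is what makes Corollary \ref{cor:radial2} applicable on the slice. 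Finally, Theorem \ref{main} plays no role in part (ii); it is the Kaufman-type planar estimate (Corollary \ref{cor:radial2}) that is needed, not the Falconer-type one.
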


Part (i) is just a restatement of Theorem \ref{ganDote2}. The lower bound $\Hd Y > k-1/k-\eta$ in part (ii) may appear odd, and is likely an artefact of the proof. We conjecture that it can be replaced by $\Hd Y > k-1$. Our argument would yield this if the following was true:
\begin{conjecture} \label{conj:cond-meas}
Let $t \in (d - 2,d]$. Let $\mu$ be a Borel probability measure on $\R^d$ that satisfies $\mu(B(x,r))\le C\, r^t$ for all $x \in \R^{d}$, $r > 0$, and some $C>0$. Further, assume that $\mu(W)=0$ for every affine $(d - 1)$-plane $W\subset\R^d$. Then, for almost every affine $2$-plane $W \subset \R^{d}$ (with respect to the natural measure on the affine Grassmanian), if the sliced measure $\mu_{W}$ on $W$ is non-trivial, then it does not give full mass to any line.
\end{conjecture}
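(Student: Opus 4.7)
The plan is to argue by contradiction. Suppose that on a positive-measure set of affine $2$-planes $W \subset \R^{d}$, the slice $\mu_{W}$ is non-trivial and fully supported on a line $\ell_{W} \subset W$. Disintegrating the natural measure over $\mathrm{Gr}(d,2)$, I would extract a positive-measure set $\mathcal{V} \subset \mathrm{Gr}(d,2)$ such that, for every $V \in \mathcal{V}$, there is a positive $\mathcal{H}^{d-2}$-measure set $A_{V} \subset V^{\perp}$ on which $\mu_{V,a}$ concentrates on a line $\ell_{V,a} \subset V+a$ with direction $v_{V,a} \in S(V)$. The slicing disintegration then forces $\mu$ to carry positive uniform mass on the \emph{ruled $(d-1)$-dimensional set} $E_{V} := \bigcup_{a \in A_{V}} \ell_{V,a}$, which is a Lipschitz image of $A_{V} \times \R$.

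The argument would then proceed along two complementary lines. \emph{Transversality:} For pairs $V, V' \in \mathcal{V}$ in generic position, the rulings of $E_{V}$ and $E_{V'}$ have transverse directions, so a parameter count (pairs $(a,b) \in A_{V} \times A_{V'}$ of total dimension $2(d-2)$, subject to the codimension-$(d-2)$ constraint that two generic lines in $\R^{d}$ meet) yields $\dim(E_{V} \cap E_{V'}) \leq d-2 < t$. \emph{Projection:} By Lusin's theorem, the direction map $a \mapsto v_{V,a}$ is continuous on a large subset of $A_{V}$; in the ``parallel'' sub-case where $v_{V,a} \equiv v_{V}$, the projection $\pi_{v_{V}^{\perp}}\mu$ concentrates on a set of dimension at most $d-2$. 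A Fubini argument — using that $2$-planes containing a fixed direction form a $(d-2)$-dimensional subfamily of $\mathrm{Gr}(d,2)$ — collects this across $V \in \mathcal{V}$ into a positive-measure set of directions $v \in S^{d-1}$ with $\dim \pi_{v^{\perp}}\mu \leq d-2$, contradicting Marstrand's projection theorem, which gives $\dim \pi_{v^{\perp}}\mu = \min\{t, d-1\} > d-2$ for a.e. $v$ since $t > d-2$. The hypothesis $\mu(W) = 0$ for affine hyperplanes $W$ enters crucially to rule out the trivial counter-configuration where $\mu$ lives on a hyperplane $H$: there, every slice $\mu_{W}$ is automatically concentrated on the line $H \cap W$, and both structural arguments above collapse.

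The hard part will be the \emph{varying-direction case}: if $v_{V,a}$ genuinely varies with $a$, a Lusin partition only confines directions to arcs of small angular radius $\varepsilon$, and $\pi_{v^{\perp}}\mu$ need not then collapse to a $(d-2)$-dimensional image, as an $\varepsilon$-neighborhood of a $(d-2)$-set can have full local $(d-1)$-dimensional content. Overcoming this should require either a quantitative Marstrand-type theorem tolerating angular variation with explicit dependence on the $t$-energy of $\mu$, or a rigidity statement to the effect that a ruled $(d-1)$-dimensional set whose rulings twist coherently across a positive-measure family in $\mathrm{Gr}(d,2)$ must coincide, up to a $t$-null set, with an affine hyperplane. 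A parallel obstacle sits in the transversality route, where converting the ``generic position'' parameter count into a uniform dimension bound valid on a positive-measure subfamily requires ruling out the same coherent alignment of rulings. This rigidity phenomenon is, in our view, the central technical challenge and the main reason the statement remains conjectural.
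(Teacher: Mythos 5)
This statement is Conjecture \ref{conj:cond-meas}, which the paper explicitly leaves \emph{open}: the authors introduce it as the hypothesis under which Theorem \ref{mainHighDim}(ii) would hold with the natural threshold $\Hd Y > k-1$, and cite only a strictly weaker substitute, Proposition \ref{ShWaProp2} (from \cite[Proposition 6.8]{2021arXiv211209044S}), which requires the stronger Frostman exponent $t > (d-1) - 1/(d-1) - \eta(d)$. There is therefore no proof in the paper against which to compare yours.

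You present this, appropriately, as a sketch rather than a proof, and your diagnosis of where it breaks down is accurate. The setup is sound: disintegrate $\lambda_{d,2}$ locally as $\gamma_{d,2} \times \mathcal{H}^{d-2}$, use the slicing identity to convert a positive-measure family of exceptional affine $2$-planes into positive $\mu$-mass on ruled $(d-1)$-dimensional sets $E_V$, and observe that the hypothesis $\mu(W)=0$ for hyperplanes $W$ kills the trivial counterexample of $\mu$ supported on a hyperplane. In the parallel-ruling sub-case, the reduction to a Marstrand contradiction via the projection $P_{v^\perp}\mu$ does work, and $t>d-2$ is precisely what makes the projected measure have dimension exceeding $d-2$ for a.e.\ $v$. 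The genuine gap is the one you name: once $a\mapsto v_{V,a}$ varies, Lusin only pins directions to an arc of small positive angular radius $\varepsilon$, and $P_{v^\perp}(E_V)$ can then have full local $(d-1)$-dimensional content, destroying the Marstrand contradiction; the same coherent-alignment issue undermines the transversality parameter count. Your two candidate fixes --- a quantitative Marstrand-type inequality with explicit $\varepsilon$-dependence, or a rigidity theorem forcing coherently twisting rulings across a positive-$\gamma_{d,2}$-measure family of $2$-planes onto a hyperplane --- are indeed the kind of input that would be needed, and the $1/(d-1)$ loss in Proposition \ref{ShWaProp2} is exactly the cost of the best currently available quantitative surrogate. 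You have located the obstruction correctly; the statement remains open.
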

We refer to  \cite[Chapter 10]{zbMATH01249699} or \cite[Chapter 6.1]{Mattila19} for the definition of the measures $\mu_{W}$. In \cite[Proposition 6.8]{2021arXiv211209044S}, the last two authors proved a weaker statement under the assumption $t>d-1-1/(d-1)-\eta(d)$. This explains the numerology in Theorem \ref{mainHighDim}.

There are plenty of earlier relevant results on radial projections; sometimes this topic is also studied under the name \emph{visibility}. The finite field counterparts of Theorems \ref{ganDote1}-\ref{ganDote2} were proven a little earlier by Lund, Thang, and Huong Thu in \cite{2022arXiv220507431L}. This is also where the continuum version of Theorem \ref{ganDote1} was conjectured. We are not aware of a finite field counterpart to Theorem \ref{mainNew}. As we already mentioned, the continuum version of Theorem \ref{ganDote2} was conjectured by Liu \cite{MR4269398}, who also proved partial results. The special case $k = d - 1 = \sigma$ of Theorem \ref{ganDote1} was contained in \cite[Theorem 1.1]{MR3778538}. Partial results also follow from Peres and Schlag's general framework of \emph{transversal projections} \cite{MR1749437}.

In 2021, Raz and Zahl \cite[Theorem 1.13]{2021arXiv210807311R} proved a radial projection theorem which gives non-trivial information if the set of "viewpoints" (the set "$X$" in the results described above) only has $4$ elements, all $3$ of which span a non-degenerate triangle. Earlier, in \cite[Theorem 1.6]{MR3458388}, Bond, {\L}aba and Zahl and proved "single-scale" estimates for radial projections of planar sets which are \emph{unconcentrated on lines}. This result is in the spirit of Theorem \ref{mainNew}, and perhaps its earliest precedent in the literature.

A little further away from the topic of this paper, radial projections and visibility have also been investigated in the context of rectifiability. The heuristic is that purely $1$-unrectifiable sets $Y \subset \R^{2}$ with $\mathcal{H}^{1}(Y) < \infty$ should have radial projections of zero length for "most" viewpoints $x \in \R^{2}$. Marstrand \cite[Theorem VI]{Mar} already proved that if $Y$ is as above, then $\mathcal{H}^{1}(\pi_{x}(Y \, \setminus \, \{x\})) = 0$ for all $x \in \R^{2} \, \setminus \, X$, where $\Hd X \leq 1$. Whether the same is true for $\mathcal{H}^{1}$ almost all $x \in Y$ is a well-known open problem. For more information on this question, and related ones, see \cite{MR1762426,MR2329222} and \cite[Section 6]{MR2044636}.

\subsection{Connection with orthogonal projections} Theorems \ref{mainNew} and \ref{main} can be viewed as stronger versions of classical results in fractal geometry regarding \emph{orthogonal projections}. For $e \in S^{1}$, let $P_{e}(x) := (x \cdot e)e$ be the orthogonal projection to the line spanned by $e$. If $Y \subset \R^{2}$ is a Borel set, then Kaufman \cite{Ka} in 1968 proved that
\begin{equation}
\label{kaufman} \Hd \{e \in S^{1} : \Hd P_{e}(Y) < \sigma\} \leq \sigma, \qquad 0 \leq \sigma \leq \min\{\Hd Y,1\}.\\
\end{equation}
If $\Hd Y > 1$, then the following improved estimate holds:
\begin{equation}
\label{peresSchlag} \Hd \{e \in S^{1} : \Hd P_{e}(Y) < \sigma\} \leq \max\{1 + \sigma - \Hd Y,0\}, \quad 0 \leq \sigma \leq 1.
\end{equation}
Estimate \eqref{peresSchlag} is due to Peres and Schlag \cite{MR1749437}, but the special case $\sigma = 1$ was proven already in 1982 by Falconer \cite{MR673510}. The proof of \eqref{peresSchlag} is based on Fourier analysis. The proofs of \eqref{kaufman}-\eqref{peresSchlag} can be conveniently read from Mattila's book \cite[Chapter 5]{MR3617376}.

Let us then explain the relationship between Theorems \ref{mainNew}-\ref{main} and the bounds \eqref{kaufman}-\eqref{peresSchlag}. If $\ell \subset \R^{2}$ is a line, then there exists a projective transformation $F_{\ell}$ with the property
\begin{displaymath} \Hd \pi_{x}(Y) = \Hd P_{e(x)}(F_{\ell}(Y)), \qquad x \in \ell, \, Y \subset \R^{2} \, \setminus \, \ell, \end{displaymath}
where the map $x \mapsto e(x) \in S^{1}$ is locally bilipschitz (and in particular preserves Hausdorff dimension). For more details, see Remark \ref{rem1}. In particular, the estimates \eqref{kaufman}-\eqref{peresSchlag} hold as stated for radial projections to points on "$\ell$". For example, \eqref{peresSchlag} yields
\begin{equation}\label{radialFalconer} \Hd \{x \in \ell : \Hd \pi_{x}(Y \, \setminus \, \{x\}) < \sigma \} \leq \max\{1 + \sigma - \Hd Y,0\}, \qquad 0 \leq \sigma \leq 1, \end{equation}
assuming that $\Hd Y > 1$. Since $\R^{2}$ is foliated by parallel lines, one might guess by a heuristic "Fubini argument" that
\begin{displaymath} \Hd \{x \in \R^{2} : \Hd \pi_{x}(Y \, \setminus \, \{x\}) < \sigma \} \leq \max\{2 + \sigma - \Hd Y,0\}, \qquad 0 \leq \sigma \leq 1. \end{displaymath}
This is not rigorous, but nevertheless the bound above follows from Peres and Schlag's general theory of \emph{transversal projections} \cite[Theorem 7.3]{MR1749437} applied to radial projections. Now, compare these estimates with Corollary \ref{mainRestatement}, which was a restatement of Theorem \ref{main}. Corollary \ref{mainRestatement} allows one to replace "$\ell$" by "$\R^{2}$" in \eqref{radialFalconer} while keeping the right hand side unchanged. So, the "Fubini argument" is extremely unsharp!

The Kaufman estimate \eqref{kaufman} bears a similar relationship to Theorem \ref{mainNew}. To see this, let us begin by mentioning that \eqref{kaufman} is formally equivalent to the following: if $\emptyset \neq X \subset S^{1}$ and $Y \subset \R^{2}$ are Borel sets, then $\sup_{e \in X} \Hd P_{e}(Y) \geq \min\{\Hd X,\Hd Y,1\}$. This version looks more like Theorem \ref{mainNew}. Moreover, by applying the projective transformation $F_{\ell}$ as above, one could deduce the following corollary for every fixed line $\ell \subset \R^{2}$: if $\emptyset \neq X \subset \ell$ and $Y \subset \R^{2} \, \setminus \, \ell$ are Borel sets, then
\begin{equation}\label{form72} \sup_{x \in X} \Hd \pi_{x}(Y) \geq \min\{\Hd X,\Hd Y,1\}. \end{equation}
Theorem \ref{mainNew} almost looks like \eqref{form72} without the restrictions "$X \subset \ell$" and "$Y \subset \R^{2} \, \setminus \, \ell$". The only problem is that such a statement is completely false: if $X,Y$ happen to lie on a common line, then $\Hd \pi_{x}(Y \, \setminus \, \{x\}) = 0$ for all $x \in X$. Theorem \ref{mainNew} takes this obstruction into account by assuming that $X$ is not contained in any line.

\subsection{Some words on the proofs}

The starting point of this work was the observation that while Beck's Theorem relies on the Szemer\'{e}di-Trotter incidence bound, in fact it doesn't require the full strength of Szemer\'{e}di-Trotter - any "$\epsilon$-improvement" over the elementary double-counting bound is enough. Recently, the first two authors \cite{2021arXiv210603338O} obtained an $\epsilon$-improvement in the \emph{Furstenberg set problem}, which can be seen as a continuous analogue of the point-line incidence problem from discrete geometry. (The actual discretised result we use is stated below as Theorem \ref{thm:improved-Furstenberg}.) As it is often the case in this area, while the discrete result (in this case, Beck's Theorem and its proof) provided the inspiration for our work, the proof is substantially more involved than in the discrete case.

Our general strategy is to embed the $\epsilon$-improvement on the Furstenberg set problem into  a "bootstrapping" argument where one gradually improves the lower bound in Theorem \ref{mainNew}, up to the threshold $\min\{\Hd X,\Hd Y,1\}$. The main work is contained in Lemma \ref{lem: radial2} and Corollary \ref{cor:radial}. A similar bootstrapping scheme had appeared in previous work of the second and third author, see \cite[Lemmas 5.11 and 5.17]{2021arXiv211209044S}. However, the proof of the bootstrapping step in \cite{2021arXiv211209044S} relied on a linearization argument that only yields optimal results for sets with additional regularity, such as equal Hausdorff and packing dimension. Hence, the main innovation of this article lies in the proof of the bootstrapping step (Lemma \ref{lem: radial2}). Arguing by contradiction under the assumption that a lower bound $\sigma<\min\{\Hd X,\Hd Y,1\}$ cannot be improved, we are able to pigeonhole a small scale $0< r\ll 1$ and a family of "well-behaved" $r$-tubes $\mathcal{T}$ through $X$ and $Y$ with $|\mathcal{T}|\ge r^{-2\sigma-\eta}$, where $\eta>0$ ultimately comes from the $\epsilon$-improvement in \cite{2021arXiv210603338O}. This allows us to view $Y$ as a sort of discretised Furstenberg-type set at scale $r$. If the mass of $Y\cap T$ for a typical $r$-tube in $\mathcal{T}$ is not too concentrated inside a ball of radius $r^\kappa$ (where $\kappa$ is a carefully chosen parameter, depending on $\eta$), then we can use a variant of the classical "two-ends" argument for Furstenberg sets to derive a contradiction (arising from $\eta>0$). On the other hand, if $Y\cap T$ is often concentrated on an $r^\kappa$-ball, we can use a double-counting argument to show that for many $y\in Y$ there will be many tubes $T$ through $y$ with this property, and this leads to the absurd conclusion that the ball $B(y,r^\xi)$ has too large mass to be compatible with $\sigma<\Hd(Y)$.

The proof of Theorem \ref{main} is based on a reduction to a recent incidence estimate of Fu and Ren \cite{2021arXiv211105093F}, combined with some elementary estimates on \emph{Furstenberg sets} (these will be discussed later). The proof of Fu and Ren, further, involves a Fourier-analytic component, due to Guth, Solomon, and Wang \cite{GSW19}. So, while this paper contains no Fourier transforms, they play a role in the proof of Theorem \ref{main}. As we already mentioned, the higher-dimensional counterpart, Theorem \ref{ganDote1}, can be deduced from (a quantitative version) of the planar case via integralgeometric considerations, see Section \ref{s:higherDim}. This is not the approach of Dote and Gan: in \cite{2022arXiv220803597D}, Theorem \ref{ganDote1} is proved more directly in general dimensions, although the proof still involves Fourier analysis.

\subsection{Connections and applications}

Part of the impetus for the study of radial projections in recent years came from the realization that they are closely connected to the Falconer distance set problem. In particular, a radial projection theorem of the first author \cite[Theorem 1.11]{MR3892404} plays a key r\^{o}le in the partial results on the planar version of Falconer's problem achieved in \cite{KeletiShmerkin19, GIOW20, Stull22}. However, the radial projection result in question only applies to planar sets of dimension $>1$. Theorem \ref{mainNew} (or rather its quantitative version, Corollary \ref{cor:radial}) can be seen as a substitute of \cite[Theorem 1.11]{MR3892404} for sets of dimension $\le 1$, and thus opens the door to improvements on the distance set problem in the critical case of dimension $1$. Similar considerations are valid in higher dimensions.

Restricting to cartesian products, Theorem \ref{mainNew} enables progress on another classical problem in geometric measure theory, the discretised sum-product problem. For example, the following new estimate of sum-product type follows immediately from Theorem \ref{mainNew} applied to $X=-A\times B$ and $Y= -B \times A$:
\begin{cor}
Let $A, B\subset\R$ be Borel sets. Then
\[
\Hd\left(\frac{A-B}{A-B}\right) \ge \min\{\Hd A + \Hd B, 1\}.
\]
\end{cor}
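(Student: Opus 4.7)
The plan is to apply Theorem~\ref{mainNew} directly to the Cartesian products $X := (-A) \times B$ and $Y := (-B) \times A$ in $\R^{2}$, and then to observe that radial projections from $X$ to $Y$ land, after a bi-Lipschitz reparametrisation of $S^{1}$, inside the ratio set $(A-B)/(A-B)$.

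First I would verify the hypotheses of Theorem~\ref{mainNew}. The degenerate cases where $A$ or $B$ has at most one point can be treated separately by a direct inspection (the conclusion then reduces to a triviality). In the main case, both $A$ and $B$ contain at least two distinct points, so $X$ contains a non-degenerate rectangle and is in particular not contained in any line. By the standard product inequality for Hausdorff dimension, $\Hd X \geq \Hd A + \Hd B$ and $\Hd Y \geq \Hd A + \Hd B$.

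Next I would compute the radial projection explicitly. For $x = (-a, b) \in X$ and $y = (-b', a') \in Y$, one has $y - x = (a - b',\, a' - b)$, so $\pi_{x}(y)$ is this vector normalised. Covering $S^{1}$ by two open arcs, each avoiding one of the two coordinate poles, parametrise the sphere by the slope maps $(u, v) \mapsto v/u$ and $(u, v) \mapsto u/v$; on compact sub-arcs avoiding the corresponding pole, each such map is bi-Lipschitz and hence preserves Hausdorff dimension. In either chart the image of $\pi_{x}(y)$ is either $(a' - b)/(a - b')$ or $(a - b')/(a' - b)$, and in both cases numerator and denominator belong to $A - B$. Since Hausdorff dimension is stable under finite unions, this gives
\[
\Hd \pi_{x}(Y \setminus \{x\}) \;\leq\; \Hd\!\left(\frac{A - B}{A - B}\right), \qquad x \in X.
\]

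Finally, taking the supremum over $x \in X$ and invoking Theorem~\ref{mainNew} yields
\[
\Hd\!\left(\frac{A - B}{A - B}\right) \;\geq\; \sup_{x \in X} \Hd \pi_{x}(Y \setminus \{x\}) \;\geq\; \min\{\Hd X, \Hd Y, 1\} \;\geq\; \min\{\Hd A + \Hd B, 1\},
\]
as required. There is no substantial obstacle: the only technical point is the two-chart bookkeeping needed to handle the blow-up of the slope near vertical directions, together with the verification that $X$ is genuinely off every line (which is immediate from the rectangle observation). All the real work has already been done in Theorem~\ref{mainNew}.
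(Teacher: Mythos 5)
Your proof is correct and follows exactly the route the paper has in mind: the paper states only that the corollary ``follows immediately from Theorem~\ref{mainNew} applied to $X = -A\times B$ and $Y = -B\times A$'', and your argument is precisely the expansion of that sentence (the direction-vector computation $y - x = (a - b', a' - b)$, the bi-Lipschitz slope charts on $S^1$, and the product inequality $\Hd(A\times B)\geq\Hd A + \Hd B$). The only cosmetic caveat is the dismissal of the degenerate cases: if $A$ or $B$ is empty the inequality actually fails (the ratio set is empty), so non-emptiness should be assumed; and when, say, $A=\{a\}$ is a singleton with $|B|\geq 2$, ``triviality'' amounts to noting that $(A-B)/(A-B)\supset (a-B)/c_0$ for any nonzero $c_0\in a-B$, which already has dimension $\Hd B$ — easy, but worth the one line.
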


We hope to explore these connections further in future work.

\subsection{Paper outline} Theorem \ref{mainNew} and its more quantitative counterpart, Corollary \ref{cor:radial}, is proved in Section \ref{s:new}. Theorem \ref{main} is proved in Section \ref{deltaReduction}. More precisely, we start with Theorem \ref{main2}: this is a $\delta$-discretised statement vaguely reminiscent of Theorem \ref{main}, which can be deduced from the incidence theorem of Fu and Ren \cite{2021arXiv211105093F} with some effort. This version is still some distance away from proving Theorem \ref{main}. To bridge the gap, we resort to another "bootstrapping" argument, see Theorem \ref{mainThinTubes} and Lemma \ref{lemma7}.

In Section \ref{s:higherDim}, we apply the quantitative versions of Theorems \ref{mainNew} and \ref{main} in $\R^{2}$, combined with an integralgeometric tool \cite[Theorem 6.7]{2021arXiv211209044S}, to give new proofs of Dote and Gan's results, Theorems \ref{ganDote1}-\ref{ganDote2}, in general dimensions.

\subsection{Notation and some preliminaries} The notation $B(x,r)$ stands for a closed ball of radius $r > 0$ and centre $x \in X$, in a metric space $(X,d)$. In the case $X=\R^d$, we denote $B^d= B(0,1)$. If $A \subset X$ is a bounded set, and $r > 0$, we write $|A|_{r}$ for
the $r$-covering number of $A$, that is, the smallest number of
closed balls of radius $r$ required to cover $A$. Cardinality is
denoted $|A|$, and Lebesgue measure $\mathrm{Leb}(A)$. The closed
$r$-neighbourhood of $A$ is denoted $A^{(r)}$.

If $X,Y$ are positive numbers, then $X\lesssim Y$ means that $X\le CY$ for some constant $C$, while $X\gtrsim Y$, $X\sim Y$ stand for $Y\lesssim X$, $X\lesssim Y\lesssim X$. If the implicit constant $C$ depends on a parameter this will be mentioned explicitly or denoted by a subscript.

If $\mu$ is a positive finite measure on $\R^d$ and $t\ge 0$, the $t$-energy is defined as
\[
I_t(\mu) = \iint |x-y|^{-t}d\mu(x)d\mu(y) \in (0,\infty].
\]
It is well known that if $\mu$ satisfies a Frostman condition $\mu(B(x,r))\le C r^t$ for all balls $B(x,r)$ and some $C,t>0$, then $I_s(\mu)<\infty$ for all $s<t$. Conversely, if $I_t(\mu)<\infty$, then for every $\e>0$ there are a compact set $K$ with $\mu(\R^d\setminus K)<\e$ and a constant $C$ such that $\mu|_K(B(x,r))\le C r^t$ for all balls $B(x,r)$. Frostman's Lemma asserts that if $X\subset\R^d$ is a Borel set with $\Hd(X)>t\ge 0$, then there is a Borel probability measure $\mu$ with $\spt(\mu)\subset X$ and $I_t(\mu)<\infty$. See e.g. \cite[Chapter 8]{zbMATH01249699}. In the sequel we will use these facts without further reference.

\subsection*{Acknowledgements} Some ideas for this paper were conceived while the first two authors were visiting the Hausdorff Research Institute for Mathematics, Bonn, during the trimester \emph{Interactions between Geometric measure theory, Singular integrals, and PDE.} We would like to thank the institute and its staff for their generous hospitality. We also thank Katrin F\"assler, Jiayin Liu, and Josh Zahl for useful discussions. Part of this research was done while the third author was visiting Po Lam Yung at the Australian National University, she would like to thank him  and the analysis group for the warm hospitality.

\section{Proof of Theorem \ref{mainNew}}\label{s:new}

\subsection{Preliminaries}

In order to prove Theorem \ref{mainNew}, we need two black boxes. The first one is a weaker (although more quantitative) version of the statement itself, recorded in \cite[Theorem B.1]{Shmerkin20}. The second one is a recent $\epsilon$-improvement \cite{2021arXiv210603338O} in the \emph{Furstenberg set problem}.

We begin by recalling \cite[Theorem B.1]{Shmerkin20}. We write $\mathcal{P}(B)$ for the family of Borel probability measures on the metric space $B$ (usually $\R^{d}$ or the unit ball $B^{d}$ of $\R^{d}$). Throughout this section and the next, we use the following notation. If $X,Y \subset \R^{d}$ and $G \subset X \times Y$, we write
\begin{displaymath} G|_{x} := \{y \in Y : (x,y) \in G\} \quad \text{and} \quad G|^{y} := \{x \in X : (x,y) \in G\} \end{displaymath}
for all $x \in X$ and $y \in Y$. We note that if $G \subset \R^{d} \times \R^{d}$ is a Borel set, then $G|_{x}$ and $G|^{y}$ are also Borel for every $x,y \in \R^{d}$ (this is because $\mathrm{Bor}(\R^{d} \times \R^{d}) = \mathrm{Bor}(\R^{d}) \times \mathrm{Bor}(\R^{d})$).
		
	\begin{definition}[Thin tubes]
		Let $K,t \geq 0$ and $c \in (0,1]$. Let $\mu, \nu\in \mathcal{P}(\mathbb{R}^d)$ with $\spt(\mu) =: X$ and $\spt(\nu) =: Y$. We say that $(\mu, \nu)$ has \emph{$(t, K, c)$-thin tubes} if there exists a Borel set $G \subset X \times Y$ with $(\mu \times \nu)(G) \geq c$ with the following property. If $x \in X$, then
		\begin{equation}\label{form73} \nu(T\cap G|_{x}) \leq K \cdot r^{t} \quad \text{for all } r > 0 \text{ and all $r$-tubes $T$ containing $x$.} \end{equation}
		We also say that $(\mu,\nu)$ has $t$-thin tubes if $(\mu,\nu)$ has $(t,K,c)$-thin tubes for some $K,c > 0$.
			\end{definition}
			
Here, and below, an \emph{$r$-tube} is the $r$-neighbourhood of some line. The terminology of thin tubes was introduced in \cite{2021arXiv211209044S}, and there it was in fact called \emph{strong thin tubes}. We have no use for the notion of (weak) thin tubes, so we prefer the simpler terminology.

\begin{remark}\label{thinTubesAndDimension} Assume that $\mu,\nu \in \mathcal{P}(\R^{d})$ are such that the pair $(\mu,\nu)$ has $t$-thin tubes for some $t > 0$. Then there exists $x \in \spt(\mu)$ such that $\mathcal{H}^{t}(\pi_{x}(Y \, \setminus \, \{x\})) > 0$. To see this, pick $x \in X$ such that $\nu(G|_{x}) > 0$. Then also $\nu(G|_{x} \, \setminus \, \{x\}) > 0$, since otherwise \eqref{form73} is not possible with exponent $t > 0$. Therefore, we may pick a compact set $K \subset G|_{x} \, \setminus \, \{x\} \subset Y \, \setminus \, \{x\}$ with $\nu(K) > 0$. Now \eqref{form73} implies that the push-forward $\pi_{x}(\nu|_{K})$ satisfies a $t$-dimensional Frostman condition, and therefore $\mathcal{H}^{t}(\pi_{x}(K)) > 0$. \end{remark}

We are now prepared to state \cite[Theorem B.1]{Shmerkin20}, although we do so directly using the terminology of thin tubes (see \cite[Proposition 5.10]{2021arXiv211209044S}).

\begin{proposition}\label{ShWaProp} For every $C,\delta,\epsilon,s > 0$, there exist
\begin{displaymath} \beta := \beta(s) \in (0,s), \quad \tau := \tau(\epsilon,s) > 0 \quad \text{and} \quad K := K(C,\delta,\epsilon,s) > 0,\end{displaymath}
all depending continuously on the parameters, such that the following holds. Assume that $\mu,\nu \in \mathcal{P}(B^2)$  satisfy $\dist(\spt(\mu),\spt(\nu)) \geq C^{-1}$ and the $s$-dimensional Frostman condition $\mu(B(x,r)) \leq Cr^{s}$ and $\nu(B(x,r)) \leq Cr^{s}$ for all $x \in \R^{2}$ and $r > 0$. Assume further that $\nu(T) \leq \tau$ for all $\delta$-tubes $T \subset \R^{2}$.

Then $(\mu,\nu)$ has $(\beta,K,1 -\epsilon)$-thin tubes. \end{proposition}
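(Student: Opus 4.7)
The plan is to reduce the statement to a Frostman/energy estimate for radial projection pushforwards, and then extract the thin tubes conclusion by a pigeonholing/Chebyshev step. Because $\spt(\mu)$ and $\spt(\nu)$ are separated by at least $C^{-1}$, for every $x\in\spt(\mu)$ the radial projection $\pi_x$ restricted to $\spt(\nu)$ is bilipschitz with constants controlled by $C$. Consequently, the thin-tubes estimate $\nu(T\cap G|_x)\le Kr^\beta$ for every $r$-tube $T$ through $x$ is equivalent, up to a factor depending only on $C$, to requiring that the pushforward $\pi_x(\nu|_{G|_x})$ satisfy a uniform $\beta$-dimensional Frostman condition on $S^1$.

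With this reduction in mind, my first step would be to define the function
\[
h(x,y):=\sup_{0<r\le 1}\frac{\nu(T_r(x,y))}{r^\beta},
\]
where $T_r(x,y)$ denotes the $r$-neighbourhood of the line through $x,y$, and aim for an integrated bound of the form $\iint h(x,y)^p\,d\mu(x)\,d\nu(y)\le M$ for some $p>0$ and some $M=M(C,\delta,\epsilon,s)$. Once such a bound is in hand, Chebyshev's inequality applied on $X\times Y$ supplies a Borel set $G$ with $(\mu\times\nu)(G)\ge 1-\epsilon$ on which $h\le K$, and this is exactly the thin-tubes condition.

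The core analytic work lies in producing the integrated estimate with a $\beta\in(0,s)$ depending only on $s$. My plan here is to split the supremum in the definition of $h$ at the scale $\delta$. For $r<\delta$ the integral $\iint\nu(T_r(x,y))\,d\mu(x)\,d\nu(y)$ is controlled purely by the $s$-Frostman conditions via a standard Marstrand/Kaufman-type energy computation: under the separation hypothesis one can integrate $|x-y|^{-\beta}|\text{angle}(x,y,z)|^{-\beta}$ against $d\mu(x)\,d\nu(y)\,d\nu(z)$, and finiteness is guaranteed whenever $\beta<s$. For $r\ge \delta$, the supremum in $h$ is bounded by the tube-non-concentration hypothesis $\nu(T)\le\tau$, so this regime contributes a quantity that tends to $0$ as $\tau\to 0$. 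Choosing $\tau$ small enough, depending on $\epsilon,s$, the combined bound gives the desired $L^p$ control.

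The main obstacle, and the reason for the delicate hierarchy of parameters in the statement, is that the thin-tubes estimate must hold \emph{uniformly for every} $r>0$ (not just in an averaged sense), while the Frostman condition on $\nu$ only controls scales below $\delta$ and the non-concentration hypothesis only controls scales above $\delta$. The two must be interpolated to produce a single exponent $\beta<s$ that works at all scales, and it is this interpolation that fixes the continuous dependence of $\beta$ on $s$ alone, while forcing $\tau$ to depend on $\epsilon, s$ and $K$ on all of $C,\delta,\epsilon,s$. A subtlety I expect to have to address is that the $L^p$ bound above yields, a priori, an exceptional set in $X\times Y$ which might project badly to $X$ or $Y$; handling this would involve a two-step pigeonholing, first extracting $x$'s for which $h(x,\cdot)$ is well-behaved in $\nu$-measure, and then further shrinking to ensure the uniform bound $K$ on the thin tubes.
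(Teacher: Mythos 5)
The paper does not prove Proposition \ref{ShWaProp}; it imports it as a black box from Shmerkin's earlier work (\cite[Theorem B.1]{Shmerkin20}, restated as \cite[Proposition 5.10]{2021arXiv211209044S}), where the argument ultimately rests on Bourgain's discretized projection theorem, i.e.\ on discretized sum-product estimates. Your proposal instead tries to prove it by a Kaufman-type energy computation, and that route cannot work here.

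The core analytic estimate is simply not available. Unwinding your triple integral, the Kaufman computation requires a bound on $\int \mathrm{dist}(x,\ell)^{-\beta}\,d\mu(x)$ for lines $\ell$ (equivalently, power decay of $\mu$ on tubes). But an $s$-Frostman measure in the plane with $s\le 1$ --- which is precisely the regime where this proposition is later applied in Corollary \ref{cor:radial} --- can give full mass to an arbitrarily thin tube, so there is no nontrivial estimate for $\mu$ of a tube and the integral can diverge. The hypothesis $\nu(T)\le\tau$ is a non-concentration condition on $\nu$, not on $\mu$, and it is a fixed-scale hypothesis at scale $\delta$, not the scale-dependent power decay the energy argument needs. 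Note also that you assign this hypothesis to the large-scale regime $r\ge\delta$, but that regime is trivial anyway: since $K$ is allowed to depend on $\delta$, one has $\nu(T_r)/r^\beta \le \delta^{-\beta}$ for $r\ge\delta$ with no hypothesis at all. The tube non-concentration is in fact needed for the nontrivial small-scale estimate, which your scheme does not use it for.

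Independently of these details, the conclusion you claim --- that finiteness holds whenever $\beta<s$, so that $\beta$ can be taken arbitrarily close to $s$ --- is much stronger than what the proposition asserts (some unspecified $\beta(s)\in(0,s)$, possibly very small), and it would render the rest of Section \ref{s:new} vacuous. The entire point of Lemma \ref{lem: radial2} and Corollary \ref{cor:radial}, and of the appeal to the $\epsilon$-improvement on Furstenberg sets in Theorem \ref{thm:improved-Furstenberg}, is to bootstrap a possibly tiny initial exponent $\beta$ up to any $\sigma<s$. If a single Kaufman energy integral gave $\beta<s$ outright, the paper would have nothing left to prove. This internal inconsistency with the structure of the paper is itself decisive evidence that the proposed argument has a gap: the small $\beta$ in Proposition \ref{ShWaProp} comes from a Bourgain/sum-product mechanism, not from an $L^p$ energy estimate.
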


Next, we discuss the discretised Furstenberg set estimate from \cite{2021arXiv210603338O}. It is essentially \cite[Theorem 1.3]{2021arXiv210603338O}, but again we already state it in a language suitable for our later application. We need to introduce the notion of \emph{$(\delta,s)$-sets}:

\begin{definition}[$(\delta,s,C)$-sets]\label{deltaSSet} Let $(X,d)$ be a metric space, let $P \subset X$ be a set, and let $\delta,C > 0$, and $s \geq 0$. We say that $P$ is a \emph{$(\delta,s,C)$-set} if
\begin{displaymath} |P \cap B(x,r)|_{\delta} \leq C r^{s} \cdot |P|_{\delta}, \qquad x \in X, \, r \geq \delta. \end{displaymath}
\end{definition}

The definition of a $(\delta,s,C)$-set, above, is slightly different from a more commonly used notion in the area, introduced by Katz and Tao \cite{MR1856956}, see Definition \ref{def:KatzTao} below. However, Definition \ref{deltaSSet} is not new either: these variants of $(\delta,s,C)$-sets were for example used in \cite{2021arXiv210603338O}. It is worth noting that a $(\delta,s,C)$-set is not required to be $\delta$-separated to begin with; however, it is easy to check that every $(\delta,s,C)$-set contains a $\delta$-separated $(\delta,s,C')$-set with $C' \sim C$. Another remark, to be employed without further mention, is that if $P \subset \R^{d}$ is a $(\delta,s,C)$-set, and $P' \subset P$ satisfies $|P'|_{\delta} \geq c|P|_{\delta}$, then $P'$ is a $(\delta,s,C/c)$-set.


\begin{definition}[$(\delta,s,C)$-sets of lines and tubes] Let $\mathcal{A}(2,1)$ be the metric space of all (affine) lines in $\R^{2}$ equipped the metric
\begin{displaymath} d_{\mathcal{A}(2,1)}(\ell_{1},\ell_{2}) := \|\pi_{L_{1}} - \pi_{L_{2}}\| + |a_{1} - a_{2}|. \end{displaymath}
Here $\pi_{L_{j}} \colon \R^{2} \to L_{j}$ is the orthogonal projection to the subspace $L_{j}$ parallel to $\ell_{j}$, and $\{a_{j}\} = L_{j}^{\perp} \cap \ell_{j}$. A set $\mathcal{L} \subset \mathcal{A}(2,1)$ is called a $(\delta,s,C)$-set if $\mathcal{L}$ is a $(\delta,s,C)$-set in the metric space $(\mathcal{A}(2,1),d_{\mathcal{A}(2,1)})$, in the sense of Definition \ref{deltaSSet}.

As before, a $\delta$-tube stands for the $\delta$-neighbourhood of some line $\ell \in \mathcal{A}(2,1)$. A family of $\delta$-tubes $\mathcal{T} = \{\ell(\delta) : \ell \in \mathcal{L}\}$ is called a $(\delta,s,C)$-set if the line family $\mathcal{L}$ is a $(\delta,s,C)$-set. Similarly, we define the $\delta$-covering number $|\mathcal{T}|_\delta$ to be $|\mathcal{L}|_\delta$.
\end{definition}

We can now state \cite[Theorem 1.3]{2021arXiv210603338O} in our language:
\begin{thm} \label{thm:improved-Furstenberg}
Given $s\in (0,1)$ and $t\in (s,2)$ there exists $\e=\e(s,t)>0$ such that the following holds for all $0<\delta\le \delta_0(s,t)$: if $X\subset B^2$ is an $(\delta,t,\delta^{-\e})$-set, and for each $x\in X$ there is an $(\delta,s,\delta^{-\e})$-set $\mathcal{T}'_x \subset\mathcal{T}^\delta$ of tubes passing through $x$, then
\[
\Big| \bigcup_{x\in X} \mathcal{T}'_x \Big|_\delta \ge \delta^{-2s-\e}.
\]
Moreover, $\e$ can be taken uniform in any compact subset of $\{ (s,t): s\in (0,1), t\in (s,2)\}$.
\end{thm}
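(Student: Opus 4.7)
The plan is to follow the scheme of \cite{2021arXiv210603338O}, of which this theorem is essentially a restatement in the $(\delta,s,C)$-set language. I would argue by contradiction: suppose that for some fixed $s\in(0,1)$, $t\in(s,2)$, and arbitrarily small $\delta,\epsilon>0$, there exist a $(\delta,t,\delta^{-\epsilon})$-set $X\subset B^2$ and $(\delta,s,\delta^{-\epsilon})$-sets $\mathcal{T}'_x$ of $\delta$-tubes through each $x\in X$ with $|\mathcal{T}|_\delta \le \delta^{-2s-\epsilon}$, where $\mathcal{T}:=\bigcup_{x\in X}\mathcal{T}'_x$. An elementary double-counting argument (fix $x_0\in X$ and for each $T\in\mathcal{T}'_{x_0}$ import structure from a second point $x_T\in T\cap X$) already yields $|\mathcal{T}|_\delta \gtrsim \delta^{-2s+O(\epsilon)}$, so only an $\epsilon$-improvement is at stake.

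The engine of the proof is a discretized projection / $L^q$-flattening theorem of the second author: any $(\delta,t,C)$-regular Frostman measure on $\R^2$ with $t>s$ has the property that, outside a direction set $E\subset S^1$ of Hausdorff dimension at most $s-\eta(s,t)$ with $\eta>0$, the orthogonal projection remains $(\delta,s,C')$-regular. I would first fix a Frostman probability measure $\mu$ on $X$ witnessing the $(\delta,t)$-hypothesis, and then read each set $\mathcal{T}'_x$ as a $(\delta,s)$-set of directions in $S^1$ seen from $x$. Under the contradiction hypothesis, a positive fraction of these direction sets is forced to concentrate inside the exceptional set predicted by the projection theorem applied to translates of $\mu$.

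To bridge the gap between the $\delta$-scale input and the projection theorem I would insert a dyadic pigeonholing at an intermediate scale $\Delta\in(\delta,1)$, extracting a uniform two-scale structure in which the $\Delta$-thickenings of $\mathcal{T}$ form a $(\Delta,s,\Delta^{-O(\epsilon)})$-set and each $\Delta$-thickening contains approximately $(\Delta/\delta)^{2s}$ many $\delta$-tubes from $\mathcal{T}$. Passing to this intermediate scale converts the global low covering number assumption into a concentration statement for many projections of $\mu$ at scale $\Delta$, directly violating the exceptional-set bound above as soon as $\epsilon<\eta(s,t)/C$ for a bookkeeping constant $C$ that absorbs the multi-scale losses.

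The hard part is precisely this bookkeeping: one must guarantee that the pigeonholed $\Delta$-scale subconfiguration still inherits a $(\Delta,t,\Delta^{-O(\epsilon)})$-set hypothesis strong enough to invoke the projection theorem, while simultaneously preserving enough $\delta$-scale mass to transport the $\Delta$-scale gain back down to $\delta$. Uniformity of $\epsilon=\epsilon(s,t)$ on compact subsets of $\{(s,t):s\in(0,1),\, t\in(s,2)\}$ would then follow from the analogous continuous dependence on $(s,t)$ of the exceptional-set estimate, together with the fact that every pigeonholing loss incurred along the way is polynomial in $\delta^\epsilon$.
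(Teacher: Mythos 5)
The paper does not re-prove this theorem: it states it as a translation of \cite[Theorem 1.3]{2021arXiv210603338O} from the dyadic formulation used there to the $(\delta,s,C)$-set formulation used here, with the remark that the translation is routine (pointing to \cite[Theorem 3.1]{2021arXiv210603338O} for a model) and that the uniformity of $\e$ is \cite[Remark 1.4]{2021arXiv210603338O}. You instead sketch a from-scratch derivation of the underlying result, following the broad strategy of \cite{2021arXiv210603338O}: argue by contradiction, note that the classical double-counting bound already gives $|\mathcal{T}|_\delta\gtrsim\delta^{-2s+O(\e)}$ so only an $\e$-gain is needed, invoke a discretized Kaufman-type projection theorem with $\e$-improved exceptional-set estimate, and pigeonhole at an intermediate scale $\Delta$ to transport the projection gain down to a $\delta$-scale tube count. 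This is the right \emph{shape} of the argument, but it is exactly the technical heart of \cite{2021arXiv210603338O} and is very far from routine; the ``bookkeeping'' you flag as the hard part (preserving a $(\Delta,t,\Delta^{-O(\e)})$-set structure after pigeonholing while retaining enough $\delta$-scale mass to descend) is the main content of that paper, not an afterthought. Moreover, what you call the ``$L^q$-flattening theorem'' (Shmerkin's inverse theorem for $L^q$ norms of convolutions) does not directly yield the exceptional-set estimate for projections; bridging the two is itself a nontrivial step in \cite{2021arXiv210603338O}. For the purpose of verifying the theorem as used in this paper, the relevant and much shorter task is the dyadic-to-$(\delta,s,C)$ translation — cover $B^2$ by dyadic $\delta$-squares, replace arbitrary $\delta$-tubes through $x$ by $O(1)$ dyadic $\delta$-tubes, check that a $(\delta,s,C)$-set in the sense of Definition \ref{deltaSSet} corresponds to a set of dyadic squares with the analogous non-concentration property up to bounded constants absorbed into $\delta^{-\e}$ — and your proposal does not address this at all.
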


To be precise, \cite[Theorem 1.3]{2021arXiv210603338O} is stated in terms of dyadic tubes and sets of dyadic squares, but it is straightforward to deduce Theorem \ref{thm:improved-Furstenberg} from it  (see also \cite[Theorem 3.1]{2021arXiv210603338O} for a similar deduction). The uniformity of $\e$ over compact sets is \cite[Remark 1.4]{2021arXiv210603338O}.

\subsection{The key lemma}

The next lemma contains the main work needed for Theorem \ref{mainNew}.

\begin{lemma}\label{lem: radial2}
	Let $0 < s \leq 1 $ and $\e \in (0,\tfrac{1}{10})$. Let $\mu,\nu\in\mathcal{P}(B^2)$ such that
\begin{align*}
\mu(B_r) &\le C\,r^s\quad (r>0),\\
\nu(B_r) &\le C\,r^s\quad (r>0),\\
\dist(\spt&(\mu),\spt(\nu))\ge C^{-1}
\end{align*}
for some constant $ C>0$.  Let $\beta=\beta(s)>0$ be the parameter from Proposition \ref{ShWaProp}. If both $(\mu, \nu)$  and $(\nu, \mu)$ have $(\sigma, K, 1 - \epsilon)$-thin tubes for some $\beta \leq \sigma < s$ then there exist $\eta = \eta(s,\sigma) > 0$ and  $K' = \max \{ K^{1/\eta}, 2^{-\frac{\log \eta \e -2}{\eta}},  K_0(C, \sigma, s)\} > 0$  such that $(\mu, \nu)$ and $ (\nu, \mu)$ have $(\sigma+\eta, K', 1 - 5\epsilon)$-thin tubes. Moreover, the value of $\eta(s,\sigma)$ is bounded away from zero on any compact subset of $\{(\sigma,s) \in (0,1]^{2} : \beta(s) \leq \sigma < s\}$.
\end{lemma}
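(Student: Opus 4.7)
The plan is to argue by contradiction. Assume the conclusion fails for the putative $\eta=\eta(s,\sigma)>0$ and $K'$; by symmetry, suppose it fails for the pair $(\mu,\nu)$. Let $G$ be the Borel set witnessing the $(\sigma,K,1-\epsilon)$-thin tubes hypothesis for $(\mu,\nu)$. The negation of the conclusion says that the set of pairs $(x,y)\in G$ which lie on some \emph{bad} $r$-tube at some dyadic scale $r$---that is, an $r$-tube $T\ni x$ with $\nu(T\cap G|_x)>K'r^{\sigma+\eta}$---has $\mu\times\nu$-measure at least $4\epsilon$. A double dyadic pigeonhole (in $r$ and in $x$) would then produce a single scale $r\ll 1$, a set $X_r\subset\spt\mu$ with $\mu(X_r)\gtrsim \epsilon\,|\log r|^{-2}$, and for each $x\in X_r$ a bad $r$-tube $T_x$ with $\nu(T_x\cap G|_x)\ge K' r^{\sigma+\eta}$. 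Standard Frostman regularization of $\mu$ on $X_r$ and of $\nu$ on $T_x\cap G|_x$ (using $s$-Frostman on each side and the mass lower bound in $T_x$) then yields a $(r,s_0,r^{-\e_0})$-set of viewpoints with $s_0$ close to $s$, and, through each $x$, a $(r,s-\sigma-\eta,r^{-\e_0})$-set $\mathcal{T}'_x$ of $r$-tubes obtained by radially projecting a regularized subset of $T_x\cap G|_x$ from $x$.

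The proof then runs a two-ends dichotomy with a parameter $\kappa\in(0,1)$ to be chosen in terms of $\eta$ and $s$. For each $x\in X_r$, declare $T_x$ \emph{concentrated} if $\nu|_{T_x\cap G|_x}$ gives at least half of its mass to some $r^\kappa$-ball $B_x\subset T_x$, and \emph{spread} otherwise. In the spread case, after rescaling each $T_x$ by a factor $r^{-\kappa}$, the family $\{\mathcal{T}'_x\}_{x\in X_r}$ becomes a bona fide Furstenberg-type configuration at the new scale $\delta=r^{1-\kappa}$, with both the viewpoint side and the tube-through-viewpoint side satisfying the $(\delta,\cdot,\delta^{-\e'})$-set hypotheses required by Theorem \ref{thm:improved-Furstenberg}. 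The $\e$-improvement then forces the total tube count to exceed $\delta^{-2\sigma-\eta'}$ for some $\eta'=\eta'(s,\sigma)>0$; this contradicts the natural upper bound on $|\bigcup_x\mathcal{T}'_x|_r$ produced by applying the given thin-tubes hypothesis on the \emph{reversed} pair $(\nu,\mu)$, provided $\eta$ has been chosen small compared to $\eta'$.

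In the concentrated case, a positive $\mu$-fraction of $X_r$ yields $r^\kappa$-balls $B_x$ with $\nu(B_x)\gtrsim r^{\sigma+\eta}$. A double count using the $s$-Frostman property of $\mu$ (which forces $x\mapsto B_x$ to be substantially many-to-one) shows that many $y\in\spt\nu$ lie in comparably many concentrated balls $B_x$. Summing the masses over balls $B_x\ni y$ and absorbing the union into a single larger ball $B(y,r^\xi)$ for a suitable $\xi\in(\kappa,1)$ yields $\nu(B(y,r^\xi))\gg r^{\xi s}$, contradicting the $s$-Frostman hypothesis on $\nu$. The exponent $\kappa$ is chosen to balance the two contradictions against the prescribed $\eta$. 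The analogous scheme, with $\mu$ and $\nu$ interchanged, upgrades the pair $(\nu,\mu)$ with the same parameters.

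The hard part is the spread case: one must convert the raw bad-tube family, organised only by the hypothesis on $(\mu,\nu)$ at scale $r$, into a clean Furstenberg configuration satisfying the two-sided $(\delta,\cdot)$-set hypotheses of Theorem \ref{thm:improved-Furstenberg} at a common rescaled scale $\delta$, while retaining a sharp lower bound on the total tube count and accessing the reversed thin-tubes hypothesis for the matching upper bound. The uniformity of $\eta(s,\sigma)$ on compact subsets of $\{(\sigma,s)\in(0,1]^2:\beta(s)\le\sigma<s\}$ is inherited directly from the uniformity of $\e$ in Theorem \ref{thm:improved-Furstenberg}.
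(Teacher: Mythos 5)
Your overall architecture matches the paper's: argue by contradiction, pigeonhole a single dyadic scale $r$ and a $\mu$-substantial set of viewpoints, regularize to $(r,\cdot)$-sets, run a two-ends (concentrated/spread) dichotomy with a tunable parameter $\kappa$, feed the spread case into Theorem \ref{thm:improved-Furstenberg}, and derive a Frostman contradiction from the concentrated case. But both branches of your dichotomy contain gaps where the crucial geometric input is misattributed or missing.

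In the spread branch you rescale by $r^{-\kappa}$ and claim the upper bound on the total tube count comes from the reversed thin-tubes hypothesis on $(\nu,\mu)$. That hypothesis controls $\mu$-mass in narrow tubes through a point of $Y$; it does not bound how many $r$-tubes can carry $\nu$-mass $\gtrsim r^{\sigma+O(\eta)}$. In the paper's proof the upper bound on $|\mathcal{T}'|$ comes directly from the non-concentration itself: each spread tube $T$ contains two $r^\kappa$-separated subsets $Y_{T,1},Y_{T,2}\subset\spt\nu$ each of mass $\gtrsim r^{\sigma+4\eta}$, so $T$ is essentially determined by a pair $(y_1,y_2)$ with $|y_1-y_2|\ge r^\kappa$, and at most $O(r^{-\kappa})$ tubes of the grid pass through such a pair; double counting gives $|\mathcal{T}'|\lesssim r^{-2\sigma-8\eta-\kappa}$. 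Without this two-ends double count there is no upper bound to contradict the lower bound from Theorem \ref{thm:improved-Furstenberg}, and the rescaling you describe (applied to tubes whose anchor points $x$ are scattered over $B^2$) is not a well-defined Furstenberg configuration in the sense required. The paper applies Theorem \ref{thm:improved-Furstenberg} directly at scale $r$, no rescaling needed.

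In the concentrated branch the crucial step -- "summing the masses over balls $B_x\ni y$ and absorbing the union into a single larger ball $B(y,r^\xi)$" -- is exactly where you need, and do not have, directional separation. The slabs $T_x\cap B_x$ all pass near $y$; if the tubes $T_x$ point in nearly the same direction, the slabs overlap almost entirely and summing their $\nu$-masses badly overcounts, giving no lower bound on $\nu(B(y,r^\xi))$. This is precisely where the paper invokes the reversed thin-tubes hypothesis: the bound $\mu(T\cap G|^y)\le K r^\sigma$ for $(r/\xi)$-tubes $T\ni y$ forces the fibre $H''|^y$ to spread over $\gtrsim r^{O(\eta)}(\xi/r)^\sigma$ tubes with $(r/\xi)$-separated directions, and tubes with separated directions have bounded overlap outside $B(y,\xi)$, which is what makes the mass estimate $r^{O(\eta)}\xi^\sigma\lesssim\nu(B(y,2\xi))\le C(2\xi)^s$ valid. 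The $s$-Frostman property of $\mu$ alone does not force $x\mapsto B_x$ to be "substantially many-to-one" in a way that yields separated directions. So the reversed thin-tubes hypothesis is needed in your concentrated case, not your spread case -- the roles are swapped relative to the actual proof, and as written neither branch closes.
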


\begin{proof}
	We argue by contradiction. Since the roles of $\mu, \nu$ are symmetric in the proof, we can assume  that $(\mu, \nu)$ do not have $(\sigma+\eta, K', 1-5\epsilon)$--strong thin tubes for $K' = \max \{ K^{1/\eta}, 2^{-\frac{\log \eta\e -2}{\eta}}, K_0\} > 0$, with $K_0=K_0(C,\sigma, s)\geq 1$ to be determined in the course of the proof. All the implicit constants below are allowed to depend on $C$. Let $X=\spt(\mu)$, $Y=\spt(\nu)$.

Since both $(\mu, \nu)$ and $(\nu,\mu)$ have $(\sigma, K, 1-\epsilon)$--thin tubes, there is a Borel set $G \subset X \times Y$ with $\mu\times\nu(G)>1-2\e$ such that
\begin{align}
\nu(T\cap G|_x) &\le K\cdot r^{\sigma} \text{ for all } r > 0 \text{ and  all } r\text{-tubes } T \text{ containing } x\in X, \label{eq: Y_{x,1}}\\
\mu(T\cap G|^y) &\le K\cdot r^{\sigma} \text{ for all } r > 0 \text{ and  all } r\text{-tubes } T \text{ containing } y\in Y. \label{eq: X_{y,1}}
\end{align}
For every $x \in X$ and $r \in 2^{-\N}$, let $\mathcal{T}_{x,r}''$ consist of those $r$-tubes which contain $x$ and satisfy
\begin{equation}\label{eq: lowerbd} \nu(T\cap G|_{x}) \geq K' \cdot r^{\sigma + \eta}. \end{equation}
The collection $\mathcal{T}_{x,r}''$ may be infinite, and this will cause inconvenience later. We fix this as follows. Let $\mathcal{T}^{r}$ be a family of $2r$-tubes such that
\begin{enumerate}[(i)]
\item \label{it:i} $|\mathcal{T}^{r}| \sim r^{-2}$,
\item \label{it:ii} if $T \subset \R^{2}$ is an $r$-tube, then $T \cap B^{2}$ is contained in at least one and at most $O(1)$ tubes from $\mathcal{T}^{r}$,
\item \label{it:iii} $\mathcal{T}^{r}$ is $cr$-separated for some universal $c>0$.
\end{enumerate}
The family $\mathcal{T}^{r}$, is easy to find, picking $\sim r^{-1}$ tubes in each direction from an $r$-net of directions. Note that a consequence of \eqref{it:iii} is that if $x,y \in B^{2}$, then
\begin{equation} \label{eq:separated-tubes}
|\{T \in \mathcal{T}^{r} : x,y \in T\}| \lesssim |x - y|^{-1}.
\end{equation}

Let $\mathcal{T}_{x,r}' \subset \mathcal{T}^{r}$ be a minimal set with the property that each intersection $T \cap B^{2}$ with $T \in \mathcal{T}_{x,r}''$ is contained in at least one element of $\mathcal{T}_{x,r}'$. Evidently \eqref{eq: lowerbd} remains valid for all $T \in \mathcal{T}_{x,r}'$, and moreover
\begin{equation}\label{form62} \left(\cup \mathcal{T}_{x,r}''\right) \cap B^{2}\subset \cup \mathcal{T}_{x,r}' \quad \text{and} \quad |\mathcal{T}_{x,r}'| \lesssim r^{-\sigma - \eta}. \end{equation}
We also note that since all the tubes in $\mathcal{T}_{x,r}' \subset \mathcal{T}^{r}$ contain $x \in \spt(\mu)$, they have bounded overlap on $\spt(\nu)$ (this follows from $\dist(\spt(\mu),\spt(\nu)) \geq C^{-1}$ and property \eqref{eq:separated-tubes} of $\mathcal{T}^{r}$).
We define
\[
\bar{H}_{r} := \{(x,y) \in X \times Y : y \in \cup \mathcal{T}_{x,r}'\}, \quad \text{and} \quad \bar{H} := \bigcup_{r} \bar{H}_{r}.
\]
We claim that
\begin{displaymath} (\mu \times \nu)(\bar{H}) \geq 3\epsilon. \end{displaymath}
Indeed, assume to the contrary that $(\mu \times \nu)(\bar{H}) < 3\epsilon$, thus $(\mu \times \nu)(\bar{H}^{c}) \geq 1 - 3\epsilon$, and
\begin{displaymath} (\mu \times \nu)(\bar{H}^{c} \cap G) > 1 - 5\epsilon. \end{displaymath}
Since $(\mu,\nu)$ do not have $(\sigma + \eta,K',1 - 5\epsilon)$-thin tubes by assumption, we infer that there exists a point $x \in X$, and an $r$-tube $T \subset \R^{2}$ containing $x$ which satisfies
\begin{displaymath}
\nu(T\cap G|_{x}) \geq \nu(T\cap (\bar{H}^{c} \cap G)|_{x}) \geq K' \cdot r^{\sigma + \eta}.
\end{displaymath}
While a priori this holds for some $r\in (0,1]$, we may take $r$ to be dyadic by enlarging $T$ and replacing $K'$ by $4 K'$. However, this means by definition (and the inclusion in \eqref{form62}) that $T \subset \bar{H}|_{x}$, so it is absurd that $\nu(T\cap (\bar{H}^{c})|_{x}) > 0$. This contradiction completes the proof of $(\mu \times \nu)(\bar{H}) \geq 3\epsilon$. We now set
\begin{displaymath} H := G \cap \bar{H} \quad \text{and} \quad H_{r} := G \cap \bar{H}_{r}, \end{displaymath}
and we infer from a combination of $(\mu \times \nu)(G) \geq 1 - 2\epsilon$ and $(\mu \times \nu)(\bar{H}) \geq 3\epsilon$ that
\begin{displaymath} (\mu \times \nu)(H) \geq \epsilon. \end{displaymath}

Let $r_1=r_1(C,s,\sigma)$ be a small dyadic parameter whose value will be determined along the rest of proof (we will often assume that it is small enough in terms of the given parameters without explicit mention). Let $r_2$ be such that $\sum_{r_2 \ge r\in 2^{-\N}} r^\eta < \e/2$ (for example, take $r_2= 2^{\frac{\log \eta\e-2}{\eta}}$).  Let
\[
r_0 =\min\{K^{-1/\eta}, r_2,  r_1(C,s,\sigma)\}.
\]
If $K'$ is taken so large that $K' \cdot r_0^{\sigma+\eta}>1$  (note that this matches the claimed form for $K'$), then $\mathcal{T}_{x,r}' =\emptyset$ for all $r > r_0$. This in particular implies that $H_{r}|_{x} \subset \cup \mathcal{T}_{x,r}'$ is empty for all $x \in X$ and $r > r_{0}$. In other words $H_{r} = \emptyset$ for $r > r_{0}$, which implies that $H$ is contained in the union of the sets $H_{r}$ with $r \leq r_{0}$. Since $(\mu \times \nu)(H) \geq \epsilon$, it now follows from the choice of $r_{1}\ge r_{0}$ that
\begin{displaymath} (\mu \times \nu)(H_{r}) \geq 2r^{\eta} \quad \text{for some} \quad r \leq r_{0}. \end{displaymath}
We fix this value of "$r$" for the remainder of the proof. We define
\begin{displaymath} \mathbf{X} := \{x \in X : \nu(H_{r}|_{x}) \geq r^{\eta}\}, \end{displaymath}
and note that $\mu(\mathbf{X}) \geq r^{\eta}$. Recall from the definition of $\bar{H}_{r} \supset H_{r}$ that the fibres $H_{r}|_x$ are covered by the tube families $\mathcal{T}_{x,r}'$, and by \eqref{form62} that $|\mathcal{T}_{x,r}'| \lesssim r^{ -\sigma -\eta}$.  For $x\in \mathbf{X}$, define
\[
\mathcal{T}_x := \{ T\in\mathcal{T}'_{x,r}: \nu(T\cap H_{r}|_{x}) \geq r^{\sigma+3\eta}\},\quad Y_x = (H_{r}|_{x}) \cap \bigcup \mathcal{T}_x.
\]
Since $\nu(H_{r}|_{x}) \geq r^{\eta}$ for $x \in \mathbf{X}$ and to form $Y_{x}$ we are removing $\lesssim r^{-\sigma-\eta}$ tubes of mass $< r^{\sigma+3\eta}$, we have $\nu(Y_{x}) \geq r^{2\eta}$ for all $x \in \mathbf{X}$. For every $x \in \mathbf{X}$, the set $\mathcal{T}_x$ of $r$-tubes covers $Y_x$ and satisfies
\begin{equation}\label{eq: eqfraction}
r^{\sigma + 3\eta} \leq  \nu(T\cap Y_x)   \leq r^{\sigma-\eta}, \qquad T\in \mathcal{T}_x,
\end{equation}
where the upper bound follows from $Y_{x} \subset H|_{x} \subset G|_{x}$, the choice of $r_0$, and \eqref{eq: Y_{x,1}}. In fact, more generally
\[
\nu(T^{(\rho)}\cap Y_x) \leq r^{-\eta}\rho^{\sigma}, \qquad \rho\in [r, 1], \, T\in \mathcal{T}_x.
\]

Putting these facts together, we see that
\begin{equation}\label{form61} r^{-\sigma + 3\eta} \leq r^{-\sigma + \eta} \cdot \nu(Y_{x})  \lesssim |\mathcal{T}_{x}| \leq |\mathcal{T}_{x,r}'| \lesssim r^{-\sigma - \eta}, \qquad x \in \mathbf{X}, \end{equation}
and
\[
|\{ T'\in\mathcal{T}_{x}: T'\subset T^{(\rho)} \}| \leq r^{-\sigma-4\eta}
\]
for and $T\in\mathcal{T}_{x}$. Since for the axial line $\ell$ of $\mathcal{T}$ we have
\[
B(\ell,\rho) \subset \{ \ell'\in \mathcal{A}(2,1): \ell'\cap B^2 \subset \ell^{(O(\rho))}\},
\]
we deduce that $\mathcal{T}_{x}$ is an $(r, \sigma, r^{-5\eta})$--set for each $x \in \mathbf{X}$. 

Let
\[
\kappa=\frac{14\eta}{s - \sigma}.
\]
(This choice will become clear at the end of the proof.) Call a tube $T\in\mathcal{T}_x$ \emph{concentrated} if there is a ball $B_T$ of radius $r^\kappa$ with
\[
\nu(T\cap B_T\cap Y_x)\ge \tfrac{1}{3} \cdot \nu(T\cap Y_x).
\]
Suppose first that there is $\mathbf{X}'\subset \mathbf{X}$ with $\mu(\mathbf{X}')\ge \mu(\mathbf{X})/2$ such that at least half of the tubes $\mathcal{T}_x$ are not concentrated for $x\in \mathbf{X}'$. Since $\mu(\mathbf{X}')\ge r^{\eta}/2$, we get from the mass distribution principle and the Frostman assumption on $\mu$ that $\mathcal{H}^\sigma_\infty(\mathbf{X}')\gtrsim C^{-1}r^{\eta}$. By the discrete form of Frostman's Lemma \cite[Proposition A.1]{FasslerOrponen14}, and if $r_{1} \geq r$ is sufficiently small in terms of $C$, there exists an $(r,\sigma,r^{-2\eta})$-set $P \subset \mathbf{X}'$.  Let $\mathcal{T}'_x$ be the subset of non-concentrated tubes for each $x\in P$; it is a $(r,\sigma, 2 r^{-5\eta})$--set of tubes. Let $\mathcal{T}' = \bigcup_{x\in P} \mathcal{T}'_x$.

Take $\eta\le \e(s, \sigma)^2$ for the function $\e(s, \sigma)$ in Theorem \ref{thm:improved-Furstenberg}. Note that the uniformity of $\e(s,\sigma)$ over compact subsets yields the corresponding uniformity of $\eta$ claimed in the statement. Then
\begin{equation} \label{eq:T-large}
|\mathcal{T}'| \gtrsim r^{-2\sigma -\sqrt{\eta}}.
\end{equation}
Counting cardinality makes sense, because each $\mathcal{T}_{x}' \subset \mathcal{T}_{x,r}' \subset \mathcal{T}^{r}$ was defined as subset of the common finite family $\mathcal{T}^{r}$, recall below \eqref{eq: lowerbd}. Moreover, by the separation property \eqref{it:iii} of $\mathcal{T}^{r}$, cardinality is comparable to $r$-covering number in this case.  

It follows from \eqref{eq: eqfraction} and the non-concentrated property that for each $T\in\mathcal{T}'$ there are two $r^\kappa$-separated sets $Y_{T,1}, Y_{T,2}\subset T$ with $\nu(Y_{T,j})\ge r^{\sigma+4\eta}$. Thus, recalling from property \eqref{eq:separated-tubes} of the family $\mathcal{T}^{r}$ that $|\{T \in \mathcal{T}^{r} : (y_{1},y_{2}) \in T\}| \lesssim |y_{1} - y_{2}|^{-1}$, we may infer that
\begin{align*} r^{2\sigma + 8\eta}|\mathcal{T}'| & \leq \sum_{T \in \mathcal{T}'} (\nu \times \nu)(Y_{T,1} \times Y_{T,2})\\
& = \iint \sum_{T \in \mathcal{T}'} \mathbf{1}_{Y_{T,1} \times Y_{T,2}}(y_{1},y_{2}) \, d(\nu \times \nu)(y_{1},y_{2})\\
& \lesssim \iint r^{-\kappa} \, d(\nu \times \nu)(y_{1},y_{2}) = r^{-\kappa}. \end{align*}
Thus $|\mathcal{T}'| \lesssim r^{-2\sigma - 8\eta - \kappa}$, which contradicts \eqref{eq:T-large} if first $\eta$ is taken small enough in terms of $s - \sigma$ and then $r_1$ is taken small enough in terms of $\eta$.

Assume next that there is a subset $\mathbf{X}' \subset \mathbf{X}$ with $\mu(\mathbf{X}') \geq \mu(\mathbf{X})/2$ such that at least half of the tubes in $\mathcal{T}_x$ are concentrated for all $x \in \mathbf{X}'$. Let $\mathcal{T}'_x$ denote the concentrated tubes, and let $\{ B_T: T\in\mathcal{T}'_x\}$ be the corresponding ``heavy'' $r^\kappa$-balls, as in the definition. Since the family $\mathcal{T}_x$ has bounded overlap on $\spt(\nu)$, the set
\[
H' = \{ (x,y):x\in \mathbf{X}', y\in T\cap B_T \cap Y_x \text{ for some } T\in\mathcal{T}'_x\} \subset H \subset G
\]
satisfies
\begin{align*}
(\mu\times\nu)(H') &\gtrsim \mu(\mathbf{X}') \cdot \inf_{x\in \mathbf{X}'} |\mathcal{T}_x| \cdot \inf_{x\in \mathbf{X}', T\in\mathcal{T}'_x} \nu(T\cap B_T\cap Y_x)\\
&\overset{\eqref{eq: eqfraction}-\eqref{form61}}{\gtrsim} r^{\eta}\cdot r^{-\sigma + 3\eta} \cdot r^{\sigma+3\eta} = r^{7\eta}.
\end{align*}
Note that if $(x,y)\in H'$, then there is a tube $T(x,y) \in \mathcal{T}^{r}$ containing $x,y$ such that
\[
\nu\big(B(y,2r^\kappa)\cap T(x,y)\big) \gtrsim r^{\sigma+3\eta}.
\]
Further, this $\nu$ measure cannot be too concentrated near $y$, because
\[
\nu(B(y,r)) \leq C \cdot r^{s} \leq \tfrac{1}{2} \cdot \nu(B(y,2r^{\kappa} \cap T(x,y)),
\]
assuming that $3\eta < s - \sigma$ and $r_{1} \geq r$ is small enough in terms of $C$ and $\eta$. Therefore, for each $(x,y)\in H'$ we can pigeonhole a dyadic number $r \leq \xi(x,y) \leq r^\kappa$ such that
\[
\nu\big(A(y,\xi(x,y),2\xi(x,y))\cap T(x,y)\big) \geq r^{\sigma+4\eta},
\]
where $A(y,\xi,2\xi) = \{x \in \R^{2} : \xi \le |x-y| <2\xi\}$. Then, recalling that $(\mu \times \nu)(H') \gtrsim r^{7\eta}$, we can further pigeonhole a value $r \le \xi\leq r^\kappa$ such that
\[
(\mu\times\nu)(H'') \geq r^{8\eta},\quad\text{where} \quad H''=\{ (x,y)\in H' : \xi(x,y)=\xi\} \subset G.
\]
Fix $y \in Y$ such that $\mu(H''|^y) \geq r^{8\eta}$ for the rest of the proof. Observe that $H''|^{y}$ can be covered by a collection of tubes $\mathcal{T}_{y} \subset \mathcal{T}^{r}$ which contain $y$, and satisfy
\begin{equation}\label{form63} \nu(A(y,\xi,2\xi) \cap T) \geq r^{\sigma + 4\eta}, \qquad T \in \mathcal{T}_{y}. \end{equation}
We claim that $\mathcal{T}_{y}$ contains $\gtrsim r^{9\eta} \cdot (\xi/r)^{\sigma}$ elements whose directions are separated by $\geq (r/\xi)$. Indeed, if $\mathbf{T}$ is any $(r/\xi)$-tube containing $y$, then
\[
\mu(\mathbf{T} \cap H''|^{y}) \leq \mu(\mathbf{T} \cap G|^y) \stackrel{\eqref{eq: X_{y,1}}}{\le} K \cdot (r/\xi)^{\sigma} \leq r^{-\eta} \cdot (r/\xi)^\sigma.
\]
Thus, it takes $\gtrsim \mu(H''|^{y}) \cdot r^{\eta} \cdot (\xi/r)^{\sigma} \gtrsim r^{9\eta} \cdot (\xi/r)^{\sigma}$ tubes of width $(r/\xi)$ to cover $H''|^{y}$, and perhaps even more $(r/\xi)$-tubes to cover the union $\cup \mathcal{T}_{y}$.  Let $\mathcal{T}_{y}' \subset \mathcal{T}_{y}$ be a maximal subset with $(r/\xi)$-separated directions. Thus $|\mathcal{T}_{y}'| \gtrsim r^{9\eta} \cdot (\xi/r)^{\sigma}$. This proves the claim.

The usefulness of the previous claim stems from the simple geometric fact that the family $\mathcal{T}^{y}$ has bounded overlap in $\R^{2} \, \setminus \, B(y,\xi)$. Therefore, we may infer from a combination of \eqref{form63} and the Frostman condition on $\nu$ that
\begin{displaymath} r^{13\eta} \cdot \xi^{\sigma} \lesssim r^{\sigma + 4\eta} \cdot |\mathcal{T}_{y}'| \lesssim \nu(B(y,2\xi)) \leq C \cdot (2\xi)^{s}, \end{displaymath}
or in other words $r^{13\eta} \lesssim \xi^{s - \sigma} \leq   r^{\kappa(s - \sigma)}$.
This is a contradiction by the choice $\kappa=14\eta/(s - \sigma)$, if $r_0$ is taken small enough in terms of $C,\eta,s,\sigma$.
\end{proof}

\subsection{Conclusion of the proof}

Lemma \ref{lem: radial2} will be used (multiple times) to infer the following corollary:

\begin{cor}\label{cor:radial} For all $0 < \sigma < s \leq 1$ and $C,\epsilon,\delta > 0$, there exist $\tau = \tau(\epsilon,\sigma,s) > 0$ and $K = K(C,\delta,\epsilon,s,\sigma) > 0$ such that the following holds. Assume that $\mu,\nu \in \mathcal{P}(B^{2})$ satisfy $\mu(B(x,r)) \leq Cr^{s}$, $\nu(B(x,r)) \leq Cr^{s}$, $\dist(\spt(\mu),\spt(\nu)) \geq C^{-1}$, and
\begin{equation}\label{form71} \max\{\mu(T),\nu(T)\} \leq \tau \end{equation}
for all $\delta$-tubes $T \subset \R^{2}$.

Then, both $(\mu,\nu)$ and $(\nu,\mu)$ have $(\sigma,K,1 - \epsilon)$-thin tubes.
\end{cor}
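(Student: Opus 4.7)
The plan is to bootstrap from the weak initial estimate of Proposition \ref{ShWaProp} up to the desired exponent $\sigma$ by iterating Lemma \ref{lem: radial2}. Without loss of generality I assume $\sigma \geq \beta(s)$ (otherwise the conclusion will follow directly from Proposition \ref{ShWaProp} applied to both orderings, since any $(\beta(s), K_0, 1-\epsilon)$-thin tubes statement implies $(\sigma, K_0, 1-\epsilon)$-thin tubes for $\sigma \leq \beta(s)$, assuming $K_0 \geq 1$). Fix an auxiliary target $\sigma' \in (\sigma, s)$ and, using the uniform lower bound clause at the end of Lemma \ref{lem: radial2}, set
\[
\eta_0 := \inf \{ \eta(s, \sigma'') : \beta(s) \leq \sigma'' \leq \sigma' \} > 0.
\]
Let $N := \lceil (\sigma' - \beta(s)) / \eta_0 \rceil$; this will be the number of bootstrapping iterations, and it depends only on $s$ and $\sigma$.

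Each application of Lemma \ref{lem: radial2} multiplies the ``bad mass'' parameter by $5$ (going from $1-\epsilon$ to $1-5\epsilon$), so I start by setting $\epsilon_0 := \epsilon / 5^N$, and apply Proposition \ref{ShWaProp} to \emph{both} orderings $(\mu,\nu)$ and $(\nu,\mu)$ with parameter $\epsilon_0$ in place of $\epsilon$. Hypothesis \eqref{form71}, which symmetrically controls both $\mu$ and $\nu$ on $\delta$-tubes, is exactly the non-concentration input required by Proposition \ref{ShWaProp}, with the threshold $\tau := \tau(\epsilon_0, s)$ thereby determined. This yields simultaneously $(\beta(s), K_0, 1 - \epsilon_0)$-thin tubes for both $(\mu,\nu)$ and $(\nu,\mu)$.

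Next I iterate Lemma \ref{lem: radial2} exactly $N$ times: at each step both pairs satisfy the symmetric thin-tube hypothesis from the previous step, so the lemma applies and upgrades the common exponent by at least $\eta_0$ while shrinking the mass parameter by a factor of $5$. After $N$ steps the exponent is at least $\sigma'$, the bad mass is at most $5^N \epsilon_0 = \epsilon$, and the constant has grown to some $K = K(C, \delta, \epsilon, s, \sigma)$ (the dependencies all remain controlled because $N$, $\eta_0$, and the initial $K_0, \tau$ depend only on the allowed quantities). Since $\sigma' > \sigma$ and $\mu, \nu$ are probability measures, the resulting $(\sigma', K, 1-\epsilon)$-thin tubes statement immediately implies $(\sigma, K, 1-\epsilon)$-thin tubes: for $r \leq 1$ one has $r^\sigma \geq r^{\sigma'}$, while for $r \geq 1$ the bound $K r^\sigma \geq K \geq 1 \geq \nu(T \cap G|_x)$ is trivial provided $K \geq 1$, which may be arranged.

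The main bookkeeping obstacle is ensuring that the iteration count $N$ is finite and depends only on $s$ and $\sigma$; this is exactly what forces the auxiliary target $\sigma' < s$ and the use of the compactness clause in Lemma \ref{lem: radial2}. Without the latter, the per-step gain $\eta(s, \sigma'')$ could in principle collapse to zero as $\sigma''$ approaches $s$, and the bootstrap would stall. Everything else is a routine accounting of how $K$, $\tau$, and $\epsilon_0$ depend on the original parameters.
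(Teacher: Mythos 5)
Your proposal is correct and follows essentially the same bootstrapping strategy as the paper's own proof: start from Proposition \ref{ShWaProp} with a shrunken parameter $\bar{\epsilon} = \epsilon/5^N$, iterate Lemma \ref{lem: radial2} a bounded number $N = N(s,\sigma)$ of times using the uniform lower bound on $\eta(s,\cdot)$ over compact subsets, and downgrade the final exponent to $\sigma$ if needed. The auxiliary target $\sigma' \in (\sigma, s)$ you introduce is harmless but unnecessary — since $\sigma < s$ already, the interval $[\beta(s), \sigma]$ is itself a compact subset of the admissible region, so you could have taken $\sigma' = \sigma$ directly, as the paper implicitly does.
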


\begin{proof} It is immediate from Proposition \ref{ShWaProp} that both $(\mu,\nu)$ and $(\nu,\mu)$ have $(\beta,K,1 - \epsilon)$-thin tubes for some $\beta = \beta(s) \in (0,s)$ and $K = K(C,\delta,\epsilon,s) > 0$. If $\beta \geq \sigma$, the proof ends here. Otherwise $0 < \beta < \sigma$, and our task is to upgrade $\beta$ to $\sigma$. This will be done by iterating Lemma \ref{lem: radial2}.

We turn to the details. The first point is to be careful with "$\epsilon$". Indeed, let us fix $\bar{\epsilon} = \bar{\epsilon}(\epsilon,s,\sigma) > 0$ to be determined later, see \eqref{form59}. Then, instead of applying Proposition \ref{ShWaProp} directly with the parameter "$\epsilon$" given in Corollary \ref{cor:radial}, we apply Proposition \ref{ShWaProp} with the parameter "$\bar{\epsilon}$". The conclusion is the same as before: $(\mu,\nu)$ and $(\nu,\mu)$ have $(\beta,K_{1},1 - \bar{\epsilon})$-thin tubes for some $K_{1} = K_{1}(C,\delta,\bar{\epsilon},\sigma,s) > 0$, provided that
\begin{displaymath} \max\{\mu(T),\nu(T)\} \leq \tau(\bar{\epsilon},s) = \tau(\epsilon,\sigma,s) \end{displaymath}
for all $\delta$-tubes $T \subset \R^{2}$.

Now, if $\bar{\epsilon} > 0$ is sufficiently small, Lemma \ref{lem: radial2} shows that both $(\mu,\nu)$ and $(\nu,\mu)$ have $(\beta + \eta,K_{2},1 - 5\bar{\epsilon})$-thin tubes for some
\begin{displaymath} \eta = \eta(\beta,s,\sigma) = \eta(s,\sigma) > 0. \end{displaymath}
If $\beta + \eta \geq \sigma$, we are done. Otherwise, if $5\bar{\epsilon}$ remains sufficiently small, Lemma \ref{lem: radial2} says that $(\mu,\nu)$ and $(\nu,\mu)$ have $(\beta + 2\eta,K_{3},1 - 25\bar{\epsilon})$-thin tubes.

Continuing in this manner for $N \sim \eta^{-1} \sim_{s,\sigma} 1$ steps, Lemma \ref{lem: radial2} will bring us to a point where $(\mu,\nu)$ and $(\nu,\mu)$ have $(\sigma,K_{N},1 - 5^{N}\bar{\epsilon})$-thin tubes. Now, we choose $\bar{\epsilon} = \bar{\epsilon}(s,\sigma,\epsilon) > 0$ to be initially so small that
\begin{equation}\label{form59} 5^{N}\bar{\epsilon} \leq \epsilon. \end{equation}
With this choice, we have shown that $(\mu,\nu)$ and $(\nu,\mu)$ have $(\sigma,K,1 - \epsilon)$-thin tubes with constant $K = K_{N} > 0$. This completes the proof. \end{proof}

 As a corollary of the corollary, we record the following statement which is less quantitative, but more pleasant to use:
 \begin{cor}\label{cor:radial2} Let $s \in (0,1]$, and let $\mu,\nu \in \mathcal{P}(\R^{2})$ be measures which satisfy the $s$-dimensional Frostman condition $\mu(B(x,r)) \lesssim r^{s}$ and $\nu(B(x,r)) \lesssim r^{s}$ for all $x \in \R^{2}$ and $r > 0$. Assume that $\mu(\ell)\nu(\ell) < 1$ for every line $\ell \subset \R^{2}$. Then $(\mu,\nu)$ has $\sigma$-thin tubes for all $0 \leq \sigma < s$.  \end{cor}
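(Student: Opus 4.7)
The plan is to reduce Corollary \ref{cor:radial2} to the quantitative Corollary \ref{cor:radial} by extracting normalized sub-measures $\bar\mu, \bar\nu$ of $\mu, \nu$ that satisfy its hypotheses. Fixing $\sigma < s$ and choosing $\epsilon = 1/2$, Corollary \ref{cor:radial} provides a threshold $\tau = \tau(1/2, \sigma, s) > 0$. After reducing WLOG to $\mu, \nu \in \mathcal{P}(B^{2})$ (by restricting to a large ball and rescaling, which affects thin tubes only by changing constants), I would look for Borel sets $X^{*} \subset \spt(\mu)$ and $Y^{*} \subset \spt(\nu)$ with $\mu(X^{*}), \nu(Y^{*}) > 0$, $\dist(X^{*}, Y^{*}) > 0$, such that the normalizations $\bar\mu = \mu|_{X^{*}}/\mu(X^{*})$, $\bar\nu = \nu|_{Y^{*}}/\nu(Y^{*})$ are Frostman-$s$ and satisfy $\max\{\bar\mu(T), \bar\nu(T)\} \leq \tau$ for all $\delta$-tubes, for some $\delta > 0$. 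Corollary \ref{cor:radial} then produces $G \subset X^{*} \times Y^{*}$ witnessing $(\sigma, K, 1/2)$-thin tubes for $(\bar\mu, \bar\nu)$, and since $G|_{x} \subset Y^{*}$ gives $\nu(T \cap G|_{x}) = \nu(Y^{*})\bar\nu(T \cap G|_{x}) \leq K\nu(Y^{*}) r^{\sigma}$, the same $G$ witnesses $\sigma$-thin tubes for $(\mu, \nu)$.

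To construct $X^{*}, Y^{*}$, I decompose $\mu = \mu_{A} + \mu_{D}$ into the line-atomic part $\mu_{A} = \sum_{\ell : \mu(\ell) > 0}\mu|_{\ell}$ and the line-diffuse part $\mu_{D}$ (satisfying $\mu_{D}(\ell) = 0$ for every line $\ell$), and decompose $\nu$ analogously. The essential observation is that $\sup_{T}\mu_{D}(T) \to 0$ as the tube width $\delta \to 0$, and similarly for $\nu_{D}$; this follows from upper semicontinuity of $\ell \mapsto \mu_{D}(\ell^{(\delta)})$ together with compactness of the set of lines meeting $B^{2}$, using the line-diffuseness $\mu_{D}(\ell) = 0$. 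In the main case when $\mu_{D}(\R^{2}), \nu_{D}(\R^{2}) > 0$, I choose disjoint closed balls $B_{1}, B_{2} \subset B^{2}$ with $\mu_{D}(B_{1}), \nu_{D}(B_{2}) > 0$ (possible because $\mu_{D}, \nu_{D}$ are non-atomic under $s > 0$) and set $X^{*} = B_{1} \setminus \bigcup_{\ell : \mu(\ell) > 0}\ell$ and $Y^{*} = B_{2} \setminus \bigcup_{\ell : \nu(\ell) > 0}\ell$. Excising the countable union of atomic lines ensures $\mu|_{X^{*}} = \mu_{D}|_{X^{*}}$ and $\nu|_{Y^{*}} = \nu_{D}|_{Y^{*}}$, so the tube condition on $\bar\mu, \bar\nu$ then follows from the line-diffuseness by taking $\delta$ small enough.

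The main obstacle is the degenerate case $\mu_{D} \equiv 0$ (or symmetrically $\nu_{D} \equiv 0$), where $\mu$ is supported on the countable union of its atomic lines and the construction above gives $\mu(X^{*}) = 0$. I would handle this via the projective reduction of Remark \ref{rem1}: pick an atomic line $\ell^{*}$ with $\mu(\ell^{*}) > 0$; the hypothesis $\mu(\ell^{*})\nu(\ell^{*}) < 1$ gives $\nu(\R^{2} \setminus \ell^{*(\rho)}) > 0$ for some $\rho > 0$. The factorization $\pi_{x} = P_{e(x)} \circ F_{\ell^{*}}$ on $\ell^{*} \times (\R^{2} \setminus \ell^{*})$ converts the radial-projection problem for the pair $(\mu|_{\ell^{*}}, \nu|_{\R^{2} \setminus \ell^{*(\rho)}})$ into an orthogonal-projection problem for the Frostman-$s$ measure $F_{\ell^{*}}(\nu|_{\R^{2} \setminus \ell^{*(\rho)}})$, where the Marstrand-Kaufman framework (cf.\ equation \eqref{kaufman}) delivers the desired Frostman behavior on the pushforward, that is, the thin-tubes property. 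The witness so constructed gives $\sigma$-thin tubes for $(\mu, \nu)$, and the symmetric case $\nu_{D} \equiv 0$ is analogous.
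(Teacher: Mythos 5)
Your main case ($\mu_D,\nu_D>0$: excise the countably many atomic lines, restrict to disjoint balls, verify the small-tube-mass hypothesis of Corollary~\ref{cor:radial} via compactness/upper semicontinuity, conclude) is sound and is essentially the paper's third case, packaged with a slightly broader case split: the paper only invokes Corollary~\ref{cor:radial} when $\mu(\ell)=\nu(\ell)=0$ for \emph{every} line, whereas you use it whenever both measures have nonzero diffuse part, so some configurations the paper routes through Lemma~\ref{l:kaufman} you instead route through Corollary~\ref{cor:radial}. That is a legitimate alternative organization. The degenerate cases, however, contain two gaps that need attention.

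First, the implication ``$\mu(\ell^*)\nu(\ell^*)<1$ gives $\nu(\R^2\setminus\ell^{*(\rho)})>0$'' is not correct as stated: it presupposes $\nu(\ell^*)<1$, which only follows from the hypothesis when $\mu(\ell^*)=1$. When you are in the doubly degenerate subcase $\mu_D=\nu_D=0$ and $\mu$ has several atomic lines (each necessarily of mass $<1$), it is perfectly possible that $\nu(\ell^*)=1$ for your chosen $\ell^*$. The fix is a short re-choice argument: if $\nu(\ell^*)=1$ then $\mu(\ell^*)<1$, so $\mu$ charges a different line $\ell'$, and since $\nu$ gives full mass to $\ell^*$ and is non-atomic, $\nu(\ell')=0<1$; replace $\ell^*$ by $\ell'$. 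In the subcase $\nu_D>0$ the issue does not arise, since then $\nu(\ell)<1$ for every line.

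Second, the claim that ``the symmetric case $\nu_D\equiv 0$ is analogous'' is misconceived, because the thin-tubes condition is \emph{not} symmetric in $\mu$ and $\nu$: the measure $\mu$ supplies the viewpoints $x$, and the bound in \eqref{form73} controls $\nu(T\cap G|_x)$, not $\mu$. Carrying out the projective/Kaufman step with roles swapped — viewpoints on a line $\ell^*$ charged by $\nu$, projecting $\mu$ off $\ell^*$ — would produce thin tubes for $(\nu,\mu)$, which is not what the statement requires. The correct treatment of $\nu_D\equiv 0$, $\mu_D>0$ is different and in fact simpler: pick $\ell^*$ with $\nu(\ell^*)>0$; since $\mu_D>0$ one has $\mu(\ell^*)<1$, so choose a compact $K\subset\R^2\setminus\ell^*$ with $\mu(K)>0$, and observe that $\pi_x|_{\ell^*}$ is locally bilipschitz for $x\in K$, so the pushforward $\pi_x(\nu|_{\ell^*})$ inherits the $s$-Frostman condition directly, giving $(\mu|_K,\nu|_{\ell^*})$ $s$-thin tubes with $G$ the full product. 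This is precisely the paper's ``easy to check'' first subcase and uses no Kaufman-type estimate. Once these two fixes are inserted, your argument closes.
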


 \begin{proof} Assume first that $\nu(\ell) > 0$ for some line $\ell \subset \R^{2}$. Then either $\mu(\ell) = 1$ or $\mu(\ell) < 1$. The second case is easy: then $\mu(\R^{2} \, \setminus \, \ell) > 0$, so there exists a compact set $K \subset \R^{2} \, \setminus \, \ell$ with $\mu(K) > 0$. Now, it is easy to check that $(\mu|_{K},\nu|_{\ell})$ has $s$-thin tubes. This implies that $(\mu,\nu)$ has $s$-thin tubes. Assume then that $\mu(\ell) = 1$. Then $\nu(\ell) < 1$ by assumption, so there exists a compact set $K \subset \R^{2} \, \setminus \, \ell$ such that $\nu(K) > 0$. Now it follows from Lemma \ref{l:kaufman} below that $(\mu|_{\ell},\nu|_{K})$ (and hence $(\mu,\nu)$) has $\sigma$-thin tubes for all $0 \leq \sigma < s$.

We may assume in the sequel that $\nu(\ell) = 0$ for all lines $\ell \subset \R^{2}$. Assume next that $\mu(\ell) > 0$ for some line $\ell \subset \R^{2}$. Since $\nu(\ell) = 0$, there exists a compact set $K \subset \R^{2} \, \setminus \, \ell$ such that $\nu(K) > 0$. Then $(\mu|_{\ell},\nu|_{K})$ has $\sigma$-thin tubes for all $0 \leq \sigma < s$ by Lemma \ref{l:kaufman} below, which formally implies that $(\mu,\nu)$ has $\sigma$-thin tubes for all $0 \leq \sigma < s$.

 Assume finally that $\mu(\ell) = 0 = \nu(\ell)$ for all lines $\ell \subset \R^{2}$. Pick two disjoint closed balls $B_{\mu},B_{\nu}$ such that $\mu(B_{\mu}) > 0$ and $\nu(B_{\nu}) > 0$, and restrict (and renormalise) $\mu$ and $\nu$ to $B_{\mu}$ and $B_{\nu}$, respectively. After this, we may assume that $\mu,\nu$ have disjoint supports. Fix $\sigma < s$, and let $\tau = \tau(\tfrac{1}{2},\sigma,s) > 0$ be the parameter given by Corollary \ref{cor:radial}. By \cite[Lemma 2.1]{MR3892404}, there exists $\delta > 0$ such that $\max\{\mu(T),\nu(T)\} \leq \tau$ for all $\delta$-tubes $T \subset \R^{2}$. Now, it follows from Corollary \ref{cor:radial} that $(\mu,\nu)$ have $(\sigma,K,\tfrac{1}{2})$-thin tubes for some $K > 0$, which depends on $\delta,s,\sigma$, the Frostman constants of $\mu,\nu$, and the distance between their supports. In particular, $(\mu,\nu)$ has $\sigma$-thin tubes. This concludes the proof. \end{proof}

In the proof above, we needed the following lemma:
\begin{lemma}\label{l:kaufman} Let $s \in (0,1]$, and let $\mu,\nu \in \mathcal{P}(\R^{2})$ be measures with separated supports satisfying the $s$-dimensional Frostman condition $\mu(B(x,r)) \lesssim r^{s}$ and $\nu(B(x,r)) \lesssim r^{s}$ for all $x \in \R^{2}$ and $r > 0$. Assume, moreover, that there exists a line $\ell \subset \R^{2}$ such that $\spt(\mu) \subset \ell$ and $\spt(\nu) \subset \R^{2} \, \setminus \, \ell$. Then $(\mu,\nu)$ have $\sigma$-thin tubes for all $0 \leq \sigma < s$.
\end{lemma}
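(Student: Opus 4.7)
The plan is to reduce directly to Kaufman's projection theorem (in energy form) via the projective transformation $F_\ell$ from Remark \ref{rem1}. Because $\spt(\mu)\subset \ell$ and $\spt(\nu)\subset\R^2\setminus\ell$ are separated, $F_\ell$ is bilipschitz on $\spt(\mu)\cup\spt(\nu)$, and it sends radial projection from $x\in\ell$ to orthogonal projection $P_{e(x)}$, with $x\mapsto e(x)$ also bilipschitz. First I would push forward to $\mu^\ast:=e_\ast\mu\in\mathcal{P}(S^1)$ and $\nu':=F_\ell\nu\in\mathcal{P}(\R^2)$. Both retain the $s$-Frostman property on their supports, so $I_\sigma(\mu^\ast),I_\sigma(\nu')<\infty$ for every $\sigma<s\le 1$.

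Fix $\sigma\in[0,s)$. Kaufman's classical $L^2$ estimate (see \cite[Chapter 5]{MR3617376}) gives
\[
\int_{S^1} I_\sigma(P_e\nu')\,d\mu^\ast(e) \lesssim_\sigma I_\sigma(\mu^\ast)\,I_\sigma(\nu')<\infty.
\]
Pulling back through $F_\ell$ and using that bilipschitz maps preserve finiteness of $\sigma$-energy, this yields $I_\sigma(\pi_x\nu)<\infty$ for $\mu$-a.e.\ $x\in\spt(\mu)$.

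From this pointwise energy bound I would extract thin tubes by a standard two-step pigeonholing. By Markov's inequality, there exist $M<\infty$ and a Borel set $X_0\subset\spt(\mu)$ with $\mu(X_0)\ge \tfrac{3}{4}$ such that $I_\sigma(\pi_x\nu)\le M$ for all $x\in X_0$. For each such $x$ the routine "Frostman restriction" argument, applied to the potential $e\mapsto\int|e-e'|^{-\sigma}d\pi_x\nu(e')$, yields a Borel set $E_x\subset S^1$ with $\pi_x\nu(E_x)\ge \tfrac{3}{4}$ and a uniform bound $\pi_x\nu|_{E_x}(B(e,r))\le K\,r^\sigma$ for all $e\in S^1$, $r>0$, with $K=K(M)$. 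Setting $G|_x:=\pi_x^{-1}(E_x)\cap\spt(\nu)$ for $x\in X_0$ and $G|_x:=\emptyset$ otherwise, the set $G:=\{(x,y):y\in G|_x\}$ is Borel (the map $(x,y)\mapsto \sup_{r>0}r^{-\sigma}\pi_x\nu(B(\pi_x y,r))$ is Borel since $(x,e,r)\mapsto \pi_x\nu(B(e,r))$ is), and $(\mu\times\nu)(G)\ge\tfrac{1}{2}$.

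Finally, to verify the thin-tube inequality, I would fix $x\in X_0$ and any $r$-tube $T$ through $x$. Since $|y-x|\ge C^{-1}$ for every $y\in\spt(\nu)$, elementary geometry shows that $\pi_x(T\cap\spt(\nu))$ is contained in an arc of length $\lesssim C r$, so
\[
\nu(T\cap G|_x)\le \pi_x\nu|_{E_x}(\text{arc of length }\lesssim Cr)\le K\,(Cr)^\sigma \lesssim_C K r^\sigma,
\]
which is precisely the $(\sigma,K',\tfrac{1}{2})$-thin tubes condition for an appropriate $K'$. The only mildly delicate steps are the joint Borel measurability of $G$ (routine, via measurable dependence of $\pi_x\nu$ on $x$) and keeping track of bilipschitz distortion across the projective change of coordinates; the substantive input is Kaufman's $L^2$ estimate, which replaces the need for any "thin tubes"-type input beyond the classical projection theorem on $S^1$.
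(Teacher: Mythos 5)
Your proposal is correct and follows essentially the same route the paper itself takes: reduce radial projections from $\ell$ to orthogonal projections via the projective transformation of Remark \ref{rem1}, invoke Kaufman's $L^2$ energy estimate to get $I_\sigma(\pi_x\nu)<\infty$ for $\mu$-a.e.\ $x$, and then extract the thin-tubes set by the routine Markov/Frostman-restriction argument (which the paper leaves implicit). The only minor imprecision is writing the Kaufman bound with $I_\sigma(\mu^\ast)$ as a factor; what the standard proof actually uses is the $\sigma$-Frostman constant of $\mu^\ast$ (available here since $\mu^\ast$ inherits the $s$-Frostman condition and $\sigma<s$), not its $\sigma$-energy.
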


\begin{proof} Fix $0 \leq \sigma < s$. Then $I_{\sigma}(\nu) < \infty$, and the standard proof of \emph{Kaufman's projection theorem}, see \cite[p. 56]{MR3617376}, shows that
\begin{equation}\label{form76} \int_{\ell} I_{\sigma}(\pi_{x}\nu) \, d\mu(x) \lesssim I_{\sigma}(\nu) < \infty. \end{equation}
Indeed, the only estimate needed to prove this inequality is
\begin{displaymath} \mu(\{x \in \ell : |\pi_{x}(y_{1}) - \pi_{x}(y_{2})| \leq \delta\}) \lesssim \left(\frac{\delta}{|y_{1} - y_{2}|}\right)^{s}, \quad y_{1},y_{2} \in \spt(\nu), \, y_{1} \neq y_{2}, \end{displaymath}
for all $\delta > 0$ small enough, and this is easy to verify by hand under the assumptions of the lemma. Alternatively, \eqref{form76} follows by applying a projective transformation that sends $\ell$ to the line at infinity, under which the radial projections $\pi_x$, $x\in\ell$ become orthogonal projections, and then one can literally apply the calculation in \cite[p. 56]{MR3617376} before undoing the projective transformation.


 From \eqref{form76} it follows that for $\mu$ almost every $x \in \ell$, the measure $\pi_{x}\nu$ restricted to a subset of positive measure satisfies a $\sigma$-dimensional Frostman condition. This implies that $(\mu,\nu)$ has $\sigma$-thin tubes, as claimed. \end{proof}

We are then prepared to prove Theorem \ref{mainNew}, whose statement we repeat here:

\begin{thm} Let $X,Y \subset \R^{2}$ be non-empty Borel sets, where $X$ is not contained on any line. Then, $\sup_{x \in X} \Hd \pi_{x}(Y \, \setminus \, \{x\}) \geq \min\{\Hd X,\Hd Y,1\}$. \end{thm}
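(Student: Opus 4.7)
The plan is to deduce Theorem~\ref{mainNew} from the two pieces of machinery already established in this section: the thin-tubes criterion Corollary~\ref{cor:radial2}, and the implication ``thin tubes imply a large radial projection'' recorded in Remark~\ref{thinTubesAndDimension}. I may assume $\Hd X, \Hd Y > 0$, and it is enough to exhibit, for each fixed $0 < s < \min\{\Hd X,\Hd Y,1\}$, a point $x \in X$ with $\Hd \pi_x(Y \setminus \{x\}) \geq s$; the theorem follows by letting $s$ approach $\min\{\Hd X,\Hd Y,1\}$. The argument splits along a natural dichotomy: either \textbf{(A)} there exists a line $\ell_* \subset \R^2$ with $\Hd(X \cap \ell_*) \geq s$ \emph{and} $\Hd(Y \cap \ell_*) \geq s$, or \textbf{(B)} every line $\ell \subset \R^2$ satisfies $\Hd(X \cap \ell) < s$ or $\Hd(Y \cap \ell) < s$.

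In Case (A), the hypothesis that $X$ is not contained on any line lets me pick a point $p \in X \setminus \ell_*$. Since $p \notin \ell_*$, the restriction $\pi_p|_{\ell_*} \colon \ell_* \to S^1$ has nowhere-vanishing derivative (its magnitude at $q \in \ell_*$ is $\sim |q - p|^{-1}$) and is therefore locally bilipschitz, so it preserves Hausdorff dimension on compact subsets. Hence
\begin{displaymath}
\Hd \pi_p(Y \setminus \{p\}) \geq \Hd \pi_p(Y \cap \ell_*) = \Hd(Y \cap \ell_*) \geq s,
\end{displaymath}
and $x = p$ does the job.

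In Case (B), Frostman's lemma supplies Borel probability measures $\mu, \nu$, supported on compact subsets of $X$ and $Y$ respectively, each obeying an $s$-dimensional Frostman condition. A standard covering argument using $\mu(B_r) \leq C r^s$ shows that any such measure assigns zero mass to every Borel set of Hausdorff dimension strictly less than $s$; since $\spt(\mu) \subset X$ and $\spt(\nu) \subset Y$, this forces $\mu(\ell) = 0$ whenever $\Hd(X \cap \ell) < s$, and symmetrically $\nu(\ell) = 0$ whenever $\Hd(Y \cap \ell) < s$. The Case (B) assumption therefore guarantees $\mu(\ell)\nu(\ell) = 0 < 1$ for \emph{every} line $\ell$, which is precisely the input required by Corollary~\ref{cor:radial2}. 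That corollary then produces $\sigma$-thin tubes for $(\mu,\nu)$ for every $\sigma < s$, and Remark~\ref{thinTubesAndDimension} supplies, for each such $\sigma$, a point $x \in \spt(\mu) \subset X$ with $\Hd \pi_x(Y \setminus \{x\}) \geq \sigma$. Sending $\sigma \nearrow s$ and then $s \nearrow \min\{\Hd X, \Hd Y, 1\}$ finishes the argument.

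The only real subtlety is the need to peel off Case (A) rather than hoping Corollary~\ref{cor:radial2} handles everything: in that configuration a Frostman pair $(\mu,\nu)$ may \emph{a priori} be simultaneously supported on $\ell_*$, giving $\mu(\ell_*)\nu(\ell_*) = 1$, and the corollary does not apply. The bilipschitz argument from a point of $X \setminus \ell_*$ is exactly what sidesteps this obstruction, and this is the only place where the ``$X$ is not contained on any line'' hypothesis is used in an essential way.
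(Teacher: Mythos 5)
Your proof is correct and takes essentially the same approach as the paper: a dichotomy between a ``line-concentration'' case (handled by the elementary observation that $\pi_p|_{\ell_*}$ is locally bi-Lipschitz from any $p \in X \setminus \ell_*$) and a ``non-concentrated'' case reduced to Corollary~\ref{cor:radial2} via Frostman measures. The paper's dichotomy is phrased in terms of $Y$ alone ($\sup_\ell \Hd(Y\cap\ell)\ge t$ versus a uniform $\epsilon_0$-defect), followed by a further sub-case split on whether $\mu(\ell)=1$ for some line that invokes Lemma~\ref{l:kaufman} directly; you avoid the sub-case by observing that in Case~(B) one of $\mu(\ell),\nu(\ell)$ must vanish for every line, which lets Corollary~\ref{cor:radial2} absorb that discussion — a minor streamlining, but the underlying machinery is identical.
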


\begin{proof} Abbreviate $t := \min\{\Hd X,\Hd Y,1\}$. We may assume that $t > 0$, since otherwise there is nothing to prove. We start by disposing of a special case where
\begin{equation}
\label{form77} \sup_{\ell\in\mathcal{A}(2,1)} \Hd(Y \cap \ell) \geq t.
\end{equation}
If the above holds, then for every $\epsilon > 0$, there exists a line $\ell_{\epsilon} \subset \R^{2}$ such that $\Hd(Y \cap \ell_{\epsilon}) \geq t - \epsilon$. Since we assumed that $X$ does not lie on a line, we may choose points $x_{\epsilon} \in X \, \setminus \, \ell_{\epsilon}$, and then
\begin{displaymath} \sup_{x \in X} \Hd \pi_{x}(Y \, \setminus \, \{x\}) \geq \sup_{\epsilon > 0} \Hd \pi_{x_{\epsilon}}(Y \cap \ell_{\epsilon}) = \sup_{\epsilon > 0} \Hd (Y \cap \ell_{\epsilon}) \geq t. \end{displaymath}
Next, we assume the opposite of \eqref{form77}: there exists $\epsilon_{0} > 0$ such that
\begin{equation}\label{form58} \Hd (Y \cap \ell) \leq t - \epsilon_{0}, \quad \ell\in\mathcal{A}(2,1).\end{equation}
Fix $t - \epsilon_{0} < s < t \leq \min\{\Hd X,\Hd Y\}$, and let $\mu,\nu \in \mathcal{P}(\R^{2})$ with $\spt(\mu) \subset X$ and $\spt(\nu) \subset Y$ such that $\mu(B(x,r)) \leq Cr^{s}$ and $\nu(B(x,r)) \leq Cr^{s}$ for all $x \in \R^{2}$ and $r > 0$. Assume, first, that $\mu(\ell) = 1$ for some line $\ell \subset \R^{2}$. In this case, we infer from \eqref{form58} that $\nu(\ell) = 0$ for this particular line $\ell$, and therefore $\nu(K) > 0$ for some compact set $K \subset \R^{2} \, \setminus \, \ell$. Now it follows from Lemma \ref{l:kaufman} that $(\mu|_{\ell},\nu|_{K})$ has $\sigma$-thin tubes for all $\sigma < s$, and in particular $\sup_{x \in X} \Hd \pi_{x}(Y \, \setminus \, \{x\}) \geq s$.

Finally, assume that $\mu(\ell) < 1$ for all lines $\ell \subset \R^{2}$. In this case it follows directly from Corollary \ref{cor:radial2} that $(\mu,\nu)$ has $\sigma$-thin tubes for all $\sigma < s$, so $\sup_{x \in X} \Hd \pi_{x}(Y \, \setminus \, \{x\}) \geq s$. Since $s < t$ was arbitrary, this completes the proof. \end{proof}

\section{Proof of Theorem \ref{main}}\label{deltaReduction}

\subsection{A discretised version of Theorem \ref{main}}

The purpose of this section is to prove Theorem \ref{main}. The main work consists of establishing a $\delta$-discretised version, stated in Theorem \ref{main2}. This theorem will discuss $(\delta,s,C)$-sets of $\delta$-tubes $\mathcal{T}_{x}$, $x \in B^2$, with the special property that $x \in T$ for all $T \in \mathcal{T}_{x}$. In this case, it is easy to check that the $(\delta,s,C)$-set property of $\mathcal{T}_{x}$ is equivalent to the statement that the directions of the tubes (as a subset of $S^{1}$) form a $(\delta,s,C')$-set for some $C' \sim C$.

\begin{thm}\label{main2}
For every $t \in (1,2]$, $\sigma \in [0,1)$, and $\zeta > 0$,
there exist $\epsilon = \epsilon(\sigma,t,\zeta) > 0$ and $\delta_{0} = \delta_{0}(\sigma,t,\zeta) > 0$
such that the following holds for all $\delta \in (0,\delta_{0}]$.

Let $s \in [0,2]$. Let $P_{K} \subset B^{2}$ be a $\delta$-separated $(\delta,t,\delta^{-\epsilon})$-set, and let $P_{E} \subset B^{2}$ be a $\delta$-separated $(\delta,s,\delta^{-\epsilon})$-set. Assume
that for every $x \in P_{E}$, there exists a
$(\delta,\sigma,\delta^{-\epsilon})$-set of tubes
$\mathcal{T}_{x}$ with the properties $x \in T$ for all $T \in
\mathcal{T}_{x}$, and
\begin{displaymath} |T \cap P_{K}| \geq \delta^{\sigma + \epsilon}|P_{K}|, \qquad T \in \mathcal{T}_{x}. \end{displaymath}
Then $\sigma \geq s + t - 1 - \zeta$.
\end{thm}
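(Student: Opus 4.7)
The plan is to proceed by contradiction. Suppose $\sigma < s + t - 1 - \zeta$, and choose $\epsilon = \epsilon(s,t,\sigma,\zeta)>0$ small (much smaller than $\zeta$). The goal is to derive a violation of the Fu--Ren incidence estimate \cite{2021arXiv211105093F}, which is the sharp tool available in the present range $t \in (1,2]$. As preliminary bookkeeping, from the $\delta$-separated $(\delta,s,\delta^{-\epsilon})$-set condition one gets $|P_E|\gtrsim \delta^{\epsilon-s}$, and likewise $|P_K|\gtrsim \delta^{\epsilon-t}$; and $|\mathcal{T}_x| \gtrsim \delta^{\epsilon-\sigma}$, so that each tube $T\in\mathcal{T}_x$ contains $\geq \delta^{\sigma+\epsilon}|P_K| \gtrsim \delta^{-(t-\sigma)+2\epsilon}$ points of $P_K$. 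Summing, the total incidence count between $P_K$ and the multiset $\{(x,T):x\in P_E,\,T\in\mathcal{T}_x\}$ is at least $\delta^{2\epsilon}|P_E||P_K|$.

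The central task is then to replace the indexed family $\{\mathcal{T}_x\}_{x\in P_E}$ by a single $(\delta,s+\sigma,\delta^{-C\epsilon})$-set of tubes $\mathcal{T}'$ in $\mathcal{A}(2,1)$ such that each $T\in\mathcal{T}'$ still carries $\gtrsim \delta^{\sigma+C\epsilon}|P_K|$ points of $P_K$. The heuristic is that a ball $B(\ell_0,r)\subset\mathcal{A}(2,1)$ captures tubes passing within $\sim r$ of $\ell_0$; the tubes of $\mathcal{T}_x$ falling in such a ball are a $(\delta,\sigma)$-subset of cardinality $\lesssim \delta^{-\epsilon}r^{\sigma}|\mathcal{T}_x|$, but only those $x$ with $x \in \ell_0^{(O(r))}$ contribute, so
\[
|\mathcal{T}\cap B(\ell_0,r)|_\delta \lesssim \delta^{-2\epsilon}\, r^{\sigma}\, \delta^{-\sigma}\, |P_E\cap \ell_0^{(O(r))}|.
\]
If we could upgrade the $(\delta,s)$-set hypothesis on $P_E$ to tube non-concentration, $|P_E\cap \ell_0^{(r)}|\lesssim \delta^{-C\epsilon}r^{s}|P_E|$, then the non-concentration of the merged family as a $(\delta,s+\sigma,\delta^{-C\epsilon})$-set would follow. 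To secure this, I would run a multi-scale pigeonhole over dyadic $r\in[\delta,1]$: at each scale identify the worst offender $\ell_r\in\mathcal{A}(2,1)$ and discard the (small) sub-family of tubes concentrated near $\ell_r$; since only $O(|\log \delta|)$ scales matter and each removal costs a $\delta^{C\epsilon}$ factor, one retains a positive-proportion subfamily $\mathcal{T}'$ with the desired tube structure, together with the corresponding refined $P_E'\subset P_E$ and preserved tube richness in $P_K$.

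With $\mathcal{T}'$ a $(\delta,s+\sigma,\delta^{-C\epsilon})$-set and $P_K$ a $(\delta,t,\delta^{-C\epsilon})$-set, I would then invoke the Fu--Ren incidence estimate. In the regime at hand, their bound translates to a sharp Furstenberg-type lower bound on the ambient set of points cut out by a $(\delta,s+\sigma)$-set of tubes each carrying $\gtrsim \delta^{-(t-\sigma-C\epsilon)}$ points: namely
\[
|P_K|_\delta \geq \delta^{-\min(\,s+t,\,1+t-\sigma\,)+\zeta/2}.
\]
Combined with $|P_K|\lesssim \delta^{-t-C\epsilon}$ (from the richness inequality $|T\cap P_K|\leq |P_K|$ and the average tube richness enforced by the $(\delta,t)$-condition) and with the incidence lower bound $I\gtrsim \delta^{-s-t+2\epsilon}$, one of the two terms of the min is violated: in the regime $s+\sigma\leq 1$ one forces $s\leq C\zeta$, while for $s+\sigma>1$ one must instead pre-restrict $P_E$ to an $s'$-dimensional subset with $s'=1-\sigma$, after which the $1+t-\sigma$ branch gives $\sigma\geq 1 - C\zeta$. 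In either case the negation of $\sigma\geq s+t-1-\zeta$ is excluded.

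The main obstacle is the reshape step: arranging the aggregate family to be a genuine $(\delta,s+\sigma,\delta^{-C\epsilon})$-set in $\mathcal{A}(2,1)$. The difficulty is that the $(\delta,s,\delta^{-\epsilon})$-property controls the concentration of $P_E$ in balls but not in tubes, and so a multi-scale removal argument (with a careful accounting of epsilons across $O(|\log \delta|)$ levels) is required to pass to a sub-configuration with the tube-level non-concentration that Fu--Ren needs as input. A secondary, more bookkeeping-heavy obstacle is verifying the exact exponent match: the gain from $+1/2$ (Szemer\'edi--Trotter level) to $+1$ in $\sigma\geq s+t-1-\zeta$ is precisely the improvement that the Fu--Ren estimate, via its underlying Guth--Solomon--Wang input \cite{GSW19}, provides in the range $t\in(1,2]$.
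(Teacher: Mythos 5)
Your overall outline (collapse the pointwise tube families $\{\mathcal{T}_x\}_{x\in P_E}$ into a single $(\delta,\cdot)$-set of tubes, then play the incidence lower bound from the richness hypothesis against the Fu--Ren upper bound) is the same strategy the paper follows, and your final numerics once you have a $(\delta,s+\sigma)$-set of tubes are correct: comparing $\delta^{\sigma+\epsilon}|P_K||\mathcal{T}'| \le |\mathcal{I}(P_K,\mathcal{T}')|$ with the Fu--Ren bound $|P_K||\mathcal{T}'|\,\delta^{\kappa(t+(s+\sigma)-1)-O(\epsilon)}$ forces $\sigma\ge s+t-1-\zeta$ in the $\kappa=1/2$ branch and a contradiction with $\sigma<1$ in the other.

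The gap is in the intermediate step. You cannot arrange $\mathcal{T}':=\bigcup_{x\in P_E}\mathcal{T}_x$ (nor any positive-proportion subfamily) to be a $(\delta,s+\sigma,\delta^{-C\epsilon})$-set via a multi-scale pruning: that statement is \emph{false} when $s>\sigma$. By duality the union is exactly a discretised Furstenberg configuration --- a $(\delta,s)$-set of dual points each carrying a $(\delta,\sigma)$-set of dual lines --- and the sharp covering-number lower bound is $\gamma(\sigma,s)=\sigma+\min\{s,\sigma\}$, not $s+\sigma$ (Wolff's bound $2\sigma$ is tight when $s\ge\sigma$). Thus for $s>\sigma$ the union may have as few as $\approx\delta^{-2\sigma}\ll\delta^{-(s+\sigma)}$ tubes and no amount of removal can fix that: once you discard the concentrated part, what remains is simply too small to be a $(\delta,s+\sigma)$-set. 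Your back-of-the-envelope inequality $|\mathcal{T}\cap B(\ell_0,r)|_\delta \lesssim \delta^{-2\epsilon}\,r^\sigma\,\delta^{-\sigma}\,|P_E\cap\ell_0^{(O(r))}|$ is fine, but turning it into the $(\delta,s+\sigma)$-set property also requires the matching lower bound $|\mathcal{T}|\gtrsim\delta^{-(s+\sigma)}$, which is exactly the Furstenberg set estimate and is not a consequence of pigeonholing out bad scales. Even in the regime $s\le\sigma$, where $\gamma(\sigma,s)=s+\sigma$ is the right exponent, the lower bound $|\mathcal{T}|\gtrsim\delta^{-(s+\sigma)}$ is the content of the H\'era--Shmerkin--Yavicoli / Lutz--Stull Furstenberg bound and cannot be replaced by a removal argument (otherwise one would prove the full Furstenberg conjecture this way).

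What the paper does instead: it invokes a Furstenberg set lemma (Lemma~\ref{lemma4}, dualised as Lemma~\ref{lemma5}, resting on~\cite{HSY21}) to extract from $\bigcup_{x}\mathcal{T}_x$ a genuine $(\delta,\gamma(\sigma,s),\delta^{-\xi})$-set. It then splits the final analysis into four cases, $s\le\sigma$ versus $s>\sigma$ crossed with the two $\kappa$ branches in Fu--Ren. When $s\le\sigma$ the branch $\kappa=1/2$ yields the wanted inequality; when $s>\sigma$ (so $\gamma=2\sigma$), both branches produce an outright contradiction, e.g.\ $1\le\delta^{(t-1)/2-O(\xi,\epsilon)}$, showing that the hypotheses cannot hold at all in that regime. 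Your proof needs this import from the Furstenberg literature (rather than an ad hoc pruning) to make the $(\delta,\cdot)$-set of tubes correct, and it needs the case distinction on $\min\{s,\sigma\}$ to land on the right exponent.

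Two smaller issues. First, your last paragraph's case split (``$s+\sigma\le 1$ forces $s\le C\zeta$''; ``pre-restrict $P_E$ to $s'=1-\sigma$'') doesn't compute to the right conclusion and isn't needed once the $(\delta,\gamma(\sigma,s))$-set is in hand --- the clean dichotomy is on whether $\kappa(t,\gamma)=1/2$ or $1/(t+\gamma-1)$. Second, there is a bookkeeping point that the Fu--Ren theorem as cited in the paper is stated for Katz--Tao $(\delta,s)$-sets; the paper includes a decomposition lemma (Lemma~\ref{lemma3}) to convert between the two notions, which you would also need.
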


\begin{remark} It is easy to decipher from the proof the value of $\epsilon(\sigma,t,\zeta) > 0$ is bounded away from zero on compact subsets of $[0,1) \times (1,2] \times (0,1]$. \end{remark}

\begin{remark} The lower bound $\sigma \geq s + t - 1 - \zeta$ may appear odd if $s + t > 2$. In this case the lemma simply says that the hypotheses cannot hold for any $\sigma \in [0,1)$ (and for $\delta,\epsilon > 0$ sufficiently small). This is consistent with the fact that if $\mu,\nu$ are disjointly supported Frostman probability measures with exponents $s \in [0,2]$ and $t \in (1,2]$, respectively, and $s + t > 2$, then $\pi_{x}(\nu) \ll \mathcal{H}^{1}|_{S^{d - 1}}$ for $\mu$ almost every $x \in \R^{2}$, see \cite[Theorem 1.11]{MR3892404}. \end{remark}

Theorem \ref{main2} will be derived from a recent incidence theorem of
Fu and Ren \cite[Theorem 4.8]{2021arXiv211105093F} concerning $(\delta,s)$-sets of points and $(\delta,t)$-sets of tubes. The theorem of Fu and Ren is formulated in terms of a slightly different (and more classical) notion of $(\delta,s)$-sets. We start by stating this definition, and then we explore the connection to (our) $(\delta,s)$-sets.

\begin{definition}[Katz-Tao $(\delta,s)$-set]\label{def:KatzTao} Let $(X,d)$ be a metric space. We say that a $\delta$-separated set $P \subset X$ is a \emph{Katz-Tao $(\delta,s,C)$-set} if
\begin{displaymath} |P \cap B(x,r)| \leq C\left(\frac{r}{\delta} \right)^{s}, \qquad x \in \R^{d}, \, r \geq \delta. \end{displaymath}
\end{definition}

As the name suggests, the Katz-Tao $(\delta,s)$-sets were introduced by Katz and Tao \cite{MR1856956}. The next lemma shows that $(\delta,t)$-sets can be decomposed into Katz-Tao $(\delta,t)$-sets:

\begin{lemma}\label{lemma3} Let $(X,d)$ be a doubling metric space with constant $D \geq 1$.\footnote{Every ball of radius $r > 0$ can be covered by $D \in \N$ balls of radius $r/2$, with $D$ independent of $r$.} For every $\epsilon,t > 0$, there exists $\delta_{0} = \delta_{0}(\epsilon,D,t) > 0$ such that the following holds for all $\delta \in (0,\delta_{0}]$. Let $P \subset B(x_{0},1) \subset X$ be a $\delta$-separated $(\delta,t,C)$-set. Then $P$ can be written as a disjoint union
\begin{displaymath} P = P_{1} \cup \ldots \cup P_{N} \end{displaymath}
where each $P_{j}$ is a Katz-Tao $(\delta,t,1)$-set, and $N \leq C|P|\delta^{t-\epsilon}$.
\end{lemma}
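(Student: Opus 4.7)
My plan is to prove Lemma \ref{lemma3} by direct greedy sequential coloring. Order the points of $P$ arbitrarily as $p_1,\ldots,p_m$, and process them one by one: assign each $p_i$ to the smallest-indexed among the currently open classes $P_j$ for which the augmented set $P_j\cup\{p_i\}$ remains a Katz-Tao $(\delta,t,1)$-set, and otherwise open a new class containing only $p_i$. The lemma then reduces to bounding the final number of classes $N$ by $C'(t)\cdot C\cdot |P|\delta^{t-\epsilon}$ for $\delta$ small.

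The heart of the argument is the following double count. Suppose that processing some $p\in P$ forces the opening of class $N+1$. Then for every $j\in[N]$ the assignment of $p$ to $P_j$ failed, so there exists a witness ball $B_j=B(x_j,r_j)$ with $r_j\ge\delta$, $p\in B_j$, and $|(P_j\cup\{p\})\cap B_j|>(r_j/\delta)^t$. Since $P_j$ was Katz-Tao $(\delta,t,1)$ prior to the attempt, this forces
\[
|P_j\cap B_j|\;\ge\;\max\{1,\ (r_j/\delta)^t-1\}.
\]
Group the failed labels by the dyadic scale of $r_j$: $I_k:=\{j\in[N]:r_j\in[2^{-k-1},2^{-k})\}$. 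Because $p\in B_j$ the inclusion $B_j\subset B(p,2^{-k+1})$ holds for $j\in I_k$, so the disjointness of the classes $\{P_j\}$ combined with the $(\delta,t,C)$-set hypothesis yields
\[
\sum_{j\in I_k}|P_j\cap B_j|\;\le\;|P\cap B(p,2^{-k+1})|\;\le\;C\,(2^{-k+1})^t|P|.
\]

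Dividing the upper bound by the per-witness lower bound handles the regime $(r_j/\delta)^t\ge 2$ cleanly, giving $|I_k|\lesssim_t C|P|\delta^t$. In the remaining regime $r_j\in[\delta,2^{1/t}\delta]$ (only an $O_t(1)$ range of dyadic scales), the witness lower bound degenerates to $\ge 1$, but the upper bound $C(2^{-k+1})^t|P|$ is itself $\lesssim_t C|P|\delta^t$ at these scales, yielding the same conclusion $|I_k|\lesssim_t C|P|\delta^t$. Summing over the $O(\log(1/\delta))$ relevant values of $k$,
\[
N\;\le\;\sum_k|I_k|\;\lesssim_t\;C|P|\delta^t\log(1/\delta),
\]
which is bounded by $C'|P|\delta^{t-\epsilon}$ as soon as $\delta\le\delta_0(\epsilon,t)$ is chosen so that $\log(1/\delta)\le\delta^{-\epsilon}$.

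The main delicate point I expect is the handling of witnesses near the minimum scale $\delta$, where the witness lower bound $(r_j/\delta)^t-1$ degenerates to the trivial estimate $\ge 1$; this is resolved by the fact that such witnesses are confined to an $O_t(1)$ window of dyadic scales on which the companion $(\delta,t,C)$-upper bound is already $\lesssim_t C|P|\delta^t$. Note that the doubling constant $D$ enters only through the abstract assumption that $P\subset B(x_0,1)$; the greedy argument itself uses only the $(\delta,t,C)$-set property, so in fact one may take $\delta_0=\delta_0(\epsilon,t)$.
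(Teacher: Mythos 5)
Your greedy sequential coloring gives a correct proof, and the route is genuinely different from the paper's. The paper builds, at every dyadic scale $r$, a cover of $B(x_0,1)$ by balls of radius $r$ with $O_D(1)$ overlap, chops $P$ inside each cover ball into groups of size $H = 4^{t+1}C|P|\delta^t$, declares any two points in a common group adjacent, bounds the maximum degree of the resulting graph by $O_D(H\log(1/\delta))$, and finally invokes Brook's theorem to colour with that many colours; the Katz--Tao property of each colour class is then verified after the fact. You instead never build a graph: you run greedy assignment and, at the moment a new class is forced open, you extract a witness ball for each prior class, sort the witnesses by dyadic scale, lower-bound the mass of $P_j$ in its witness ball (taking care of the degenerate low scales $r_j\in[\delta,2^{1/t}\delta)$ where the bound $(r_j/\delta)^t-1$ collapses), and upper-bound their disjoint union via the $(\delta,t,C)$-set property; summing over $O(\log(1/\delta))$ scales gives $N\lesssim_{D,t}C|P|\delta^t\log(1/\delta)$, which the $\epsilon$-loss absorbs. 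Both proofs pay the same $\log(1/\delta)$ factor and both ultimately bound the number of colour classes by the number of dyadic scales times $C|P|\delta^t$; your version avoids the auxiliary graph, the ball-cover construction, and Brook's theorem, which is a cleaner and more self-contained argument for this purpose.

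One small inaccuracy: your closing remark that $\delta_0$ can be taken independent of $D$ is not quite right. The $(\delta,t,C)$-set hypothesis is phrased in terms of $\delta$-covering numbers, $|P\cap B(x,r)|_\delta \le C r^t|P|_\delta$, while the double count needs the cardinality estimate $|P\cap B(p,2^{-k+1})|\le C'(2^{-k+1})^t|P|$; converting from $|\cdot|_\delta$ to $|\cdot|$ for a $\delta$-separated set loses a multiplicative constant equal to the maximum number of $\delta$-separated points in a $\delta$-ball, which depends on the doubling constant $D$. That constant is harmless (it is eaten by the $\delta^{-\epsilon}$ just like the $\log(1/\delta)$), but it does mean the threshold $\delta_0$ should be allowed to depend on $D$ as in the statement of the lemma.
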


\begin{proof} The method is the same as in the proof of \cite[Proposition 4.5]{2021arXiv211105093F}. For each dyadic rational $r \in [\delta,2]$, let $\mathcal{B}_{r}$ be a cover of $B(x_{0},1)$ by balls of radius $r$ with overlap $\lesssim_{D} 1$, and with the property that every ball of radius $\leq r/2$ intersecting $B(x_0,1)$ is contained in at least one of the balls in $\mathcal{B}_{r}$. To construct $\mathcal{B}_{r}$, choose a maximal $(r/2)$-separated set $X_{r} \subset B(x_0,1)$, and set $\mathcal{B}_{r} := \{B(x,r) : x \in X_{r}\}$. The bounded overlap of $\mathcal{B}_{r}$ follows from the doubling hypothesis of $X$. Indeed, if some $x \in B(x_0,1)$ lies in $N$ distinct balls of $\mathcal{B}_{r}$, then the centres of these balls form an $(r/2)$-separated subset of $B(x,2r)$ of cardinality $N$. By the doubling hypothesis, $N \lesssim_{D} 1$.

We then begin the proof in earnest. Assume that $P \neq \emptyset$, otherwise there is nothing to prove. Set
\begin{equation}\label{defH} H := 4^{t+1} C|P|\delta^{t} \geq 1. \end{equation}
The lower bound "$\geq 1$" follows from the assumption that $P \neq \emptyset$ is a $(\delta,t,C)$-set. Fix $r \in 2^{-\N}$ with $r \in [\delta,1]$ and $B \in \mathcal{B}_{r}$. We divide the points in $P \cap B$ into $m(B) = \ceil{|P \cap B|/H}$ groups $G^{r}_{1}(B),\ldots,G^{r}_{m(B)}(B)$ by forming as many groups of size exactly "$H$" as possible, and then one remainder group of size $\leq H$. It is of course possible that $|P \cap B| < H$: in this case $m(B) = 1$, and we only have the remainder group.

We then form a graph $G = (P,E)$, whose edge set $E \subset P \times P$ is defined as follows. For every group $G_{j}^{r}(B)$, $1 \leq j \leq m(B)$, we connect all the members of the group to each other by an edge. Then, we do this for every $B \in \mathcal{B}_{r}$, and for every dyadic rational $r \in [\delta,1]$.

What is the maximum degree of $G$? For every $x \in P$ and $r \in [\delta,1]$ fixed, $x$ is connected to every other point in its own group $G^{r}_{j}(B)$. This statement holds whenever $B \in \mathcal{B}_{r}$ contains $x$, and this may happen for $\lesssim_{D} 1$ different choices $B \in \mathcal{B}_{r}$. So, every $r \in [\delta,1]$ yields $\lesssim_{D} H$ edges incident to $x$. The number of dyadic scales $r \in [\delta,1]$ is $\sim \log (1/\delta)$, so $\max_{x \in P} \deg_{G}(x) \lesssim_{D} H\log (1/\delta)$.

As in \cite[Lemma 4.7]{2021arXiv211105093F}, we may now deduce from Brook's Theorem (see \cite{MR12236} or \cite{MR396344}) that the graph $G$ admits a colouring of the vertices $P$ with $N \lesssim_{D} H\log(1/\delta)$ colours with the property that no two adjacent vertices share the same colour. The colouring induces a partition $P = P_{1} \cup \ldots \cup P_{N}$, where
\begin{displaymath} N \lesssim_{D} H\log(1/\delta) \stackrel{\eqref{defH}}{=} 4^{t+1} C|P|\delta^{t} \log(1/\delta). \end{displaymath}
In particular, $N \leq C|P|\delta^{t - \epsilon}$ if $\delta > 0$ is small enough, depending only on $\epsilon,D,t$.

It remains to check that each $P_{j}$ is a Katz-Tao $(\delta,t,1)$-set. Fix $1 \leq j \leq N$. Let $x\in X$ and $\delta \leq \rho \leq 1$. If $B(x,\rho) \cap B(x_{0},1) = \emptyset$, then $|P_{j} \cap B(x,\rho)| = 0$. Otherwise $B(x,\rho)$ is contained in one of the balls $B \in \mathcal{B}_{r}$ for some $r/4 \leq \rho \leq r/2$. By the $(\delta,t,C)$-property of $P$, we have
\begin{displaymath} |P \cap B| \leq Cr^{t}|P| \leq 4^{t}(C\rho^{t}|P|). \end{displaymath}
This implies that the number "$m(B)$" of groups $G_{1}^{r}(B),\ldots,G_{m(B)}^{r}(B)$ is at most
\begin{displaymath} m(B) = \ceil{|P \cap B|/H} \leq \ceil{4^{t}4^{-t-1}(\rho/\delta)^{t}} \leq (\rho/\delta)^{t}. \end{displaymath}
 The set $P_{j}$ contains at most one point in each of these groups, therefore $|P_{j} \cap B(x,\rho)| \leq |P_{j} \cap B| \leq (\rho/\delta)^{t}$. This completes the proof of the lemma. \end{proof}




We next state the incidence theorem of Fu and Ren \cite[Theorem 4.8]{2021arXiv211105093F}. Given a set of points $P$ and a set of tubes $\mathcal{T}$,
we let $\mathcal{I}(P,\mathcal{T}) := \{(p,T) : p \in T\}$ be the set of \emph{incidences} between $P$ and $\mathcal{T}$.
 The original version was stated for Katz-Tao $(\delta,s)$-sets, but the version below will follow by combining the original statement with Lemma \ref{lemma3}:
\begin{thm}\label{FuRen} Let $0 \leq s,t \leq 2$. Then, for every $\epsilon > 0$, there exist $\delta_{0} = \delta_{0}(\epsilon) > 0$ such that the following holds for all $\delta \in (0,\delta_{0}]$. If $P \subset B^{2}$ is a  $\delta$-separated $(\delta,s,\delta^{-\epsilon})$-set, and $\mathcal{T}$ is a $\delta$-separated $(\delta,t,\delta^{-\epsilon})$-set, then
\begin{displaymath} |\mathcal{I}(P,\mathcal{T})| \leq |P||\mathcal{T}| \cdot \delta^{\kappa(s + t - 1) - 5\epsilon}, \end{displaymath}
where $\kappa = \kappa(s,t) = \min\{1/2,1/(s + t - 1)\}$. \end{thm}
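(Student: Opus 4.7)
The plan is to reduce the statement to the Katz--Tao version of the Fu--Ren incidence theorem (the original formulation in \cite{2021arXiv211105093F}) by decomposing both the point set $P$ and the tube family $\mathcal{T}$ into Katz--Tao pieces via Lemma \ref{lemma3}. The critical observation that makes this reduction essentially automatic is that the target bound $|\mathcal{I}|\le|P||\mathcal{T}|\,\delta^{\kappa(s+t-1)-5\epsilon}$ is \emph{multiplicative} in the cardinalities of $P$ and $\mathcal{T}$, so a disjoint partition on either side introduces no combinatorial loss beyond the $\epsilon$-slack already built into the target exponent.

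The first step is to verify that Lemma \ref{lemma3} applies in both settings of interest: $(\R^{2},|\cdot|)$ for the point side, and $(\mathcal{A}(2,1),d_{\mathcal{A}(2,1)})$ for the axial lines of the tubes. The former is classically doubling; for the latter, the portion of $\mathcal{A}(2,1)$ consisting of lines meeting $B^{2}$ is, via a bilipschitz change of coordinates, the product $S^{1}\times[-1,1]$ (direction together with signed distance to the origin), which is doubling with an absolute constant. Applying Lemma \ref{lemma3} with parameter $\epsilon$ to $P$ and to the axial line set of $\mathcal{T}$ then yields disjoint decompositions
\[
P=P_{1}\sqcup\cdots\sqcup P_{N_{P}}, \qquad \mathcal{T}=\mathcal{T}_{1}\sqcup\cdots\sqcup\mathcal{T}_{N_{T}},
\]
where each $P_{i}$ is a Katz--Tao $(\delta,s,1)$-set and each $\mathcal{T}_{j}$ is a Katz--Tao $(\delta,t,1)$-set, valid provided $\delta\le\delta_{0}(\epsilon)$.

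The second step is to invoke the Katz--Tao version of the Fu--Ren incidence theorem on each pair $(P_{i},\mathcal{T}_{j})$. This yields an estimate of the shape
\[
|\mathcal{I}(P_{i},\mathcal{T}_{j})| \le |P_{i}||\mathcal{T}_{j}|\,\delta^{\kappa(s+t-1)-\epsilon_{0}},
\]
where $\epsilon_{0}$ can be taken as small as desired at the cost of further shrinking $\delta_{0}$. Summing over $i,j$ and using the disjointness of the partitions to write $\sum_{i}|P_{i}|=|P|$ and $\sum_{j}|\mathcal{T}_{j}|=|\mathcal{T}|$, the product shape of the bound closes the estimate:
\[
|\mathcal{I}(P,\mathcal{T})| = \sum_{i,j}|\mathcal{I}(P_{i},\mathcal{T}_{j})| \le |P||\mathcal{T}|\,\delta^{\kappa(s+t-1)-\epsilon_{0}}.
\]
Choosing the initial $\epsilon$ so that the total loss (Katz--Tao slack $\epsilon_{0}$, plus the losses absorbed in the decomposition lemma, plus the implicit constant $C_{\epsilon}$ absorbed into a further $\delta^{-\epsilon}$ for $\delta$ small) is at most $5\epsilon$ in the exponent concludes the argument.

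There is essentially no conceptual obstacle: the only delicate step is bookkeeping the various $\epsilon$-losses and choosing $\delta_{0}$ so that they all fit under $\delta^{-5\epsilon}$. The conceptually crucial point, that the partition-and-sum strategy is lossless, is forced by the multiplicative shape $|P||\mathcal{T}|\cdot\delta^{\text{power}}$ of the Fu--Ren bound.
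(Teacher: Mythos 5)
Your overall strategy---decompose both $P$ and $\mathcal{T}$ into Katz--Tao pieces via Lemma~\ref{lemma3} and then apply the original Fu--Ren theorem to each pair---is exactly the paper's route. But the central step of the argument, the claimed estimate
\[
|\mathcal{I}(P_{i},\mathcal{T}_{j})| \le |P_{i}||\mathcal{T}_{j}|\,\delta^{\kappa(s+t-1)-\epsilon_{0}},
\]
is not what the Katz--Tao version of \cite[Theorem 4.8]{2021arXiv211105093F} provides, and in fact such a multiplicative bound is \emph{false} for Katz--Tao sets. A Katz--Tao $(\delta,s,1)$-set is only bounded above in size (by $\delta^{-s}$); it can be as small as a single point. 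Take $s=t=1$, $P_{i}$ a single point $p$, and $\mathcal{T}_{j}$ a bush of $\sim\delta^{-1}$ tubes through $p$ with $\delta$-separated directions (this is a Katz--Tao $(\delta,1,1)$-set). Then $|\mathcal{I}(P_{i},\mathcal{T}_{j})|\sim\delta^{-1}$, while your claimed bound would give $1\cdot\delta^{-1}\cdot\delta^{1/2-\epsilon_{0}}=\delta^{-1/2-\epsilon_{0}}$, a contradiction. The multiplicative shape of the bound in Theorem~\ref{FuRen} is legitimate precisely because the sets there are $(\delta,s,\delta^{-\epsilon})$-sets in the paper's normalized sense, which forces $|P|\gtrsim\delta^{-s+\epsilon}$ and $|\mathcal{T}|\gtrsim\delta^{-t+\epsilon}$; that ``largeness'' is lost when passing to Katz--Tao pieces.

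What the Katz--Tao Fu--Ren theorem actually gives is the absolute bound $|\mathcal{I}(P_{i},\mathcal{T}_{j})|\le\delta^{-s-t}\cdot\delta^{\kappa(s+t-1)-\epsilon}$, and because this has no factor of $|P_{i}||\mathcal{T}_{j}|$, summing over $i,j$ produces $MN$ rather than $|P||\mathcal{T}|$. This is where the second conclusion of Lemma~\ref{lemma3}---which you apply but never use---becomes essential: it controls the \emph{number} of pieces by $M\le |P|\delta^{s-2\epsilon}$ and $N\le|\mathcal{T}|\delta^{t-2\epsilon}$. Plugging this in gives
\[
|\mathcal{I}(P,\mathcal{T})|\le MN\cdot\delta^{-s-t}\cdot\delta^{\kappa(s+t-1)-\epsilon}\le |P||\mathcal{T}|\cdot\delta^{\kappa(s+t-1)-5\epsilon},
\]
which is the desired estimate. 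So the gap is concrete: you need the absolute Katz--Tao incidence bound together with the count on the number of pieces, not a multiplicative bound (which would be false) summed telescopically.
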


\begin{proof}[Proof of Theorem \ref{FuRen}] By Lemma \ref{lemma3} applied in both $\R^{2}$ and $\mathcal{A}(2,1)$, we may write
\begin{displaymath} P = P_{1} \cup \ldots \cup P_{M} \quad \text{and} \quad \mathcal{T} = \mathcal{T}_{1} \cup \ldots \cup \mathcal{T}_{N}, \end{displaymath}
where $M \leq |P|\delta^{s - 2\epsilon}$ and $N \leq |\mathcal{T}|\delta^{t - 2\epsilon}$, each $P_{j}$ is a Katz-Tao $(\delta,s,1)$-set, and each $\mathcal{T}_{j}$ is a Katz-Tao $(\delta,t,1)$-set.
 By the original version of \cite[Theorem 4.8]{2021arXiv211105093F}, we have
\begin{displaymath} |\mathcal{I}(P_{i},\mathcal{T}_{j})| \leq \delta^{-s - t} \cdot \delta^{\kappa(s + t - 1) - \epsilon}, \qquad 1 \leq i \leq M, \, 1 \leq j \leq N, \end{displaymath}
assuming that $\delta = \delta(\epsilon) > 0$ is small enough. Therefore,
\begin{displaymath} |\mathcal{I}(P,\mathcal{T})| \leq \sum_{i = 1}^{M} \sum_{j = 1}^{N} |\mathcal{I}(P_{i},\mathcal{T}_{j})| \leq MN \cdot \delta^{-s - t} \cdot \delta^{\kappa(s + t - 1) - \epsilon} \leq |P||\mathcal{T}| \cdot \delta^{\kappa(s + t - 1) - 5\epsilon}. \end{displaymath}
This concludes the proof. \end{proof}

The next lemma allows us to find $(\delta,s)$-sets inside $\delta$-discretised Furstenberg sets.

\begin{lemma}\label{lemma4} For every $\xi > 0$, there exists $\delta_{0} = \delta_{0}(\xi) > 0$ and $\epsilon = \epsilon(\xi) > 0$ such that the following holds for all $\delta \in (0,\delta_{0}]$. Let $s \in [0,1]$ and $t \in [0,2]$. Assume that $\mathcal{T}$ is a non-empty $(\delta,t,\delta^{-\epsilon})$-set of $\delta$-tubes in $\R^{2}$. Assume that for every $T \in \mathcal{T}$ there exists a non-empty $(\delta,s,\delta^{-\epsilon})$-set $P_{T} \subset T \cap B^{2}$.
Then, the union
\begin{equation}\label{form32a} P := \bigcup_{T \in \mathcal{T}} P_{T} \end{equation}
contains a non-empty $(\delta,\gamma(s,t),\delta^{-\xi})$-set, where
\begin{equation}\label{form31} \gamma(s,t) = s + \min\{s,t\}. \end{equation}
\end{lemma}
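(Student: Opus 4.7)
My plan is to prove the lemma by a combination of dyadic pigeonholing and a double-counting incidence-style argument exploiting both the $(\delta,s)$-set structure of each fibre $P_T$ and the $(\delta,t)$-set structure of $\mathcal{T}$. The intuition behind $\gamma(s,t)=s+\min\{s,t\}$ is that $P$ enjoys two sources of non-concentration — intra-tube (from each $P_T$) and inter-tube (from the variety of tube directions) — and that the inter-tube contribution effectively caps at $s$ when $s\leq t$, reflecting the possibility of many nearly parallel tubes.

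\smallskip

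First I would perform a uniformization by dyadic pigeonholing on the values $|P_T|_\delta$, replacing $\mathcal{T}$ by a sub-$(\delta,t,\delta^{-O(\epsilon)})$-set $\mathcal{T}'$ on which $|P_T|_\delta\sim N_0$ is essentially constant (the $\log(1/\delta)$ losses are absorbed into $\delta^{-O(\epsilon)}$ provided $\epsilon$ is small in terms of $\xi$). Setting $P':=\bigcup_{T\in\mathcal{T}'}P_T$, the main estimate for an arbitrary ball $B(x,r)$ with $r\geq\delta$ is
\begin{displaymath}
|P'\cap B(x,r)|_\delta \;\leq\; \sum_{T\in\mathcal{T}'\,:\,T\cap B\ne\emptyset}|P_T\cap B|_\delta \;\lesssim\; \delta^{-O(\epsilon)}\,r^s N_0\cdot\bigl|\{T\in\mathcal{T}'\,:\,T\cap B\ne\emptyset\}\bigr|_\delta,
\end{displaymath}
using that $T\cap B$ has diameter $\leq 2r$ together with the $(\delta,s,\delta^{-\epsilon})$-set property of $P_T$. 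The number of tubes of $\mathcal{T}'$ hitting $B(x,r)$ is then controlled by the $(\delta,t)$-set property of $\mathcal{T}'$: these tubes are contained in an $r$-thickening, in the metric $d_{\mathcal{A}(2,1)}$, of the one-parameter curve of lines through $x$, so they can be covered by $\lesssim 1/r$ tube-balls of radius $r$, each containing $\lesssim\delta^{-O(\epsilon)}r^t|\mathcal{T}'|_\delta$ tubes; multiplication yields the bound $\lesssim\delta^{-O(\epsilon)}r^{t-1}|\mathcal{T}'|_\delta$.

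\smallskip

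To convert these estimates into the desired $(\delta,\gamma(s,t),\delta^{-\xi})$-set property, I would lower-bound $|P'|_\delta$ by a symmetric multiplicity argument — each point of $P'$ lies in at most $\lesssim\delta^{t-1-O(\epsilon)}|\mathcal{T}'|_\delta$ tubes, obtained by covering, in tube space, the one-parameter family of tubes through a fixed point — divide the total incidence count $\sum_T|P_T|_\delta\sim|\mathcal{T}'|_\delta N_0$ by this multiplicity, and then apply a standard \emph{bad-ball removal} pigeonhole (iteratively restricting to any ball which violates the target non-concentration) to extract a non-empty $(\delta,\gamma,\delta^{-\xi})$-subset. The main obstacle is the regime $t<1$: there the tube-through-a-point and tubes-hitting-a-ball counts are essentially trivial, so one cannot naively expect dimension $s+t$, and it is precisely the possibility of many nearly parallel tubes in $\mathcal{T}$ that forces the answer $\gamma=2s$ (rather than $s+t$) when $s\leq t$. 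Handling this case cleanly requires splitting on $t\geq 1$ versus $t<1$ and combining the multiplicity and covering bounds asymmetrically (in whichever of physical space or tube space is richer), which is where the $\min\{s,t\}$ cap in $\gamma$ genuinely emerges.
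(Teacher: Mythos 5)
Your proposed double-counting argument does not reach the exponent $\gamma(s,t) = s + \min\{s,t\}$: it only recovers the weaker, ``trivial'' Furstenberg bound $s + \max\{t-1,0\}$, and the gap between the two is exactly where the substance of the lemma lies. Write $N := |\mathcal{T}'|_\delta$ and suppose $N_0 \sim \delta^{-s}$ (the minimal possible value after your pigeonholing). Your point-multiplicity bound is $M \lesssim \delta^{\max\{t-1,0\}-O(\epsilon)} N$, so dividing the incidence count $I \sim N_0 N$ by $M$ gives only $|P'|_\delta \gtrsim \delta^{-s - \max\{t-1,0\} + O(\epsilon)}$. Meanwhile, plugging your two non-concentration bounds into the ratio yields
\begin{displaymath}
\frac{|P' \cap B(x,r)|_\delta}{|P'|_\delta} \lesssim \delta^{-O(\epsilon)}\, r^{s} \cdot \min\{r^{t-1},1\}\cdot N \cdot \delta^{\max\{t-1,0\}},
\end{displaymath}
in which $N$ does not cancel; since the hypotheses give $N \gtrsim \delta^{-t+\epsilon}$ but no useful upper bound, this can be arbitrarily far above $\delta^{-\xi} r^{\gamma}$. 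Concretely, for $s = t = 1/2$ the multiplicity argument gives effective size $\delta^{-1/2}$, whereas $\gamma = 1$; the missing factor $\delta^{-1/2}$ is precisely Wolff's bush gain, which no single max-multiplicity or Cauchy--Schwarz step produces.

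The $2s$ regime ($s \leq t$) genuinely requires a two-scale bush/hairbrush argument (pigeonhole a high-multiplicity point, use that tubes through it separate outside a small ball, and run the intra-tube non-concentration at that intermediate scale), and the $s+t$ regime ($t < s$) requires a separate argument as well; neither is a consequence of the estimates you set up. Your closing remark about ``combining the bounds asymmetrically'' gestures at this but does not supply the missing step, and bad-ball removal cannot manufacture the exponent: it converts a Hausdorff-\emph{content} lower bound into a $(\delta,\gamma)$-set, whereas you only have (at best) a covering-number lower bound, which is strictly weaker. The paper's proof avoids all of this by invoking the known discretised Furstenberg estimate \cite[Theorem A.1]{HSY21}, whose proof contains exactly the two-scale arguments needed, to establish $\mathcal{H}^{\gamma(s,t)}_\infty(P(\delta)) \geq \delta^{\xi}$, and then extracts the $(\delta,\gamma(s,t),\delta^{-\xi})$-set via the discrete Frostman lemma \cite[Proposition A.1]{FasslerOrponen14}. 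If you want a self-contained proof rather than a citation, you would need to reproduce (a content-level version of) that Furstenberg argument; the outline you give is not a substitute for it.
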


\begin{remark} The constants $\delta_{0},\epsilon > 0$ can indeed be taken independent of $s,t$, the chief reason being that the lemma also holds with $s \in \{0,1\}$ and $t \in \{0,2\}$. \end{remark}


\begin{proof}[Proof of Lemma \ref{lemma4}] We only sketch the argument, since it is nearly follows from existing statements, and the full details are very standard (if somewhat lengthy). The main point is the following: it is known that the Hausdorff dimension of every $(s,t)$-Furstenberg set $F \subset \R^{2}$ satisfies $\Hd F \geq \gamma(s,t)$, where $\gamma(s,t)$ is the function defined in \eqref{form31}. The case $t \leq s$ is due to Lutz and Stull \cite{MR4179019}; they used information theoretic methods, but a more classical proof is also available, see \cite[Theorem A.1]{HSY21}. The case $t \geq s$ essentially goes back to Wolff in \cite{Wolff99}, but also literally follows from \cite[Theorem A.1]{HSY21}.

While the statement in \cite[Theorem A.1]{HSY21} only concerns Hausdorff dimension, the proof goes via Hausdorff content,
 and the following statement can be extracted from the argument. Let $P$ be the set defined in \eqref{form32a}. Then, the $\gamma(s,t)$-dimensional Hausdorff content of the $\delta$-neighbourhood $P(\delta)$ satisfies
\begin{equation}\label{form33} \mathcal{H}_{\infty}^{\gamma(s,t)}(P(\delta)) \geq \delta^{\xi}, \end{equation}
assuming that $\epsilon = \epsilon(\xi) > 0$ and the upper bound $\delta_{0} = \delta(\xi) > 0$ for the scale $\delta$ were chosen small enough. The claim in the lemma immediately follows from \eqref{form33}, and \cite[Proposition A.1]{FasslerOrponen14}. This proposition, in general, states that if $B \subset \R^{d}$ is a set with $\mathcal{H}^{s}_{\infty}(B) = \kappa > 0$, then $B$ contains a non-empty $(\delta,s,C\kappa^{-1})$-set for some absolute constant $C > 0$. In particular, from \eqref{form33} we see that $P(\delta)$ contains a $(\delta,\gamma(s,t),C\delta^{-\xi})$-set. This easily implies a similar conclusion about $P$ itself. \end{proof}

By standard point-line duality considerations (see a few details below the statement), Lemma \ref{lemma4} is equivalent to the following statement concerning tubes:
\begin{lemma}\label{lemma5} For every $\xi > 0$, there exists $\delta_{0} = \delta_{0}(\xi) > 0$ and $\epsilon = \epsilon(\xi) > 0$ such that the following holds for all $\delta \in (0,\delta_{0}]$. Let $s \in [0,1]$ and $t \in [0,2]$. Assume that $P \subset B^{2}$ is a non-empty $(\delta,t,\delta^{-\epsilon})$-set. Assume that for every $x \in P$ there exists a non-empty $(\delta,s,\delta^{-\epsilon})$-set of tubes $\mathcal{T}_{x}$ with the property that $x \in T$ for all $T \in \mathcal{T}_{x}$. Then, the union
\begin{equation}\label{form32} \mathcal{T} := \bigcup_{x \in P} \mathcal{T}_{x} \end{equation}
contains a non-empty $(\delta,\gamma(s,t),\delta^{-\xi})$-set, where $\gamma(s,t) = s + \min\{s,t\}$. \end{lemma}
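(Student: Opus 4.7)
The plan is to deduce Lemma \ref{lemma5} directly from Lemma \ref{lemma4} by invoking a projective point--line duality. Concretely, I would use the standard duality
\[
D \colon (a,b) \longmapsto \{(x,y) \in \R^{2} : y = ax - b\},
\]
which sends a point in $\R^{2}$ to a non-vertical line, and conversely sends a non-vertical line $\ell = \{y = ax - b\}$ to the point $(a,b)$. This map has the incidence-preserving property that $p \in \ell$ if and only if $D(\ell) \in D(p)$. After a preliminary localisation (restrict $P$ to a small cap where we may assume all relevant tubes have bounded slope — since $P \subset B^2$ is covered by $O(1)$ such caps, we may pigeonhole one carrying $\gtrsim |P|_{\delta}$ mass without affecting the $(\delta,t,\delta^{-\e})$-property up to a multiplicative constant), the map $D$ and its inverse are bi-Lipschitz on the relevant parameter ranges.

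The key technical step is then to check that $D$ (and its thickening to $\delta$-tubes/$\delta$-balls) transforms the hypothesis of Lemma \ref{lemma5} into the hypothesis of Lemma \ref{lemma4}. Namely, setting
\[
\widetilde{\mathcal{T}} := \{ D(x) : x \in P\}\quad\text{(a family of lines)}, \qquad \widetilde{P}_{D(x)} := \{ D(T) : T \in \mathcal{T}_{x}\}\quad\text{(a set of points on $D(x)$)},
\]
one checks that $\widetilde{\mathcal{T}}$ is a $(\delta, t, C\delta^{-\e})$-set of $\delta$-tubes (with $C$ absolute) because $P$ was a $(\delta,t,\delta^{-\e})$-set of points, and each $\widetilde{P}_{D(x)}$ is a $(\delta, s, C\delta^{-\e})$-set of points on the line $D(x)$ because $\mathcal{T}_{x}$ was a $(\delta,s,\delta^{-\e})$-set of tubes (all passing through $x$, which corresponds under duality to all the dual points lying on the single line $D(x)$). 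The metric used on $\mathcal{A}(2,1)$ in the definition of $(\delta,s,C)$-sets of tubes is, up to Lipschitz equivalence on compact pieces, exactly what is needed to make $D$ preserve the defining Frostman-type condition.

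With these dual objects in place, Lemma \ref{lemma4} (applied with the roles of $s$ and $t$ interchanged appropriately and with a slightly smaller $\e$ to absorb the bounded multiplicative losses above) yields that
\[
\widetilde{P} := \bigcup_{x \in P} \widetilde{P}_{D(x)} = \{D(T) : T \in \mathcal{T}\}
\]
contains a non-empty $(\delta, \gamma(s,t), \delta^{-\xi/2})$-set $\widetilde{P}' \subset \widetilde{P}$. Pulling back through $D^{-1}$ and using its bi-Lipschitz behaviour on the localised region again, we obtain a subfamily $\mathcal{T}' \subset \mathcal{T}$ which is a $(\delta, \gamma(s,t), \delta^{-\xi})$-set, as desired.

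The main obstacle is bookkeeping: verifying that the $(\delta, s, C)$-set condition really does transfer correctly under $D$, uniformly over the localised region, and that the loss constants can be absorbed into $\delta^{-\e}$ at the cost of shrinking $\e$ and $\delta_{0}$. These are routine but require carefully quantifying the bi-Lipschitz constants of $D$ and comparing the intrinsic metric on $\mathcal{A}(2,1)$ (defined via $\pi_{L_{1}} - \pi_{L_{2}}$ and the base point displacement) with the Euclidean metric on the dual $(a,b)$-plane. Nothing genuinely new happens beyond Lemma \ref{lemma4}.
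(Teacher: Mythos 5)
Your approach is the same as the paper's: deduce Lemma~\ref{lemma5} from Lemma~\ref{lemma4} by applying point-line duality, sending each $x \in P$ to a line and each tube $T\in\mathcal{T}_x$ to a point, so that $x\in T$ becomes the dual incidence, and then transporting the $(\delta,\cdot)$-set conditions through the locally bi-Lipschitz duality map; the paper likewise only sketches this and defers the bookkeeping (tubes vs.~lines, thickening) to \cite[Sections 6.1--6.2]{2021arXiv210704471D}. One small correction on the localisation step: since $P\subset B^2$, the dual lines $\mathbf{D}(x)$ automatically have bounded slope, so restricting $P$ to a spatial cap is not what is needed; what must be controlled is the slope of the tubes in $\mathcal{T}_x$ (equivalently, the location of the dual points $\mathbf{D}^{\ast}(T)$), and this is done by pigeonholing a fixed angular sector $I\subset S^1$ of directions such that $\gtrsim |\mathcal{T}_x|_\delta$ tubes of $\mathcal{T}_x$ have direction in $I$ for $\gtrsim |P|_\delta$ of the points $x$, then rotating so that $I$ is bounded away from the vertical; shrinking $P$ spatially does not by itself bound the tube slopes. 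This is still routine pigeonholing of the kind you anticipate, so the proof goes through.
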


If the reader is not familiar with point-line duality, then the full details in a very similar context are recorded in \cite[Sections 6.1-6.2]{2021arXiv210704471D}. Here we just describe the key ideas. To every point $(a,b) \in \R^{2}$, we associate the line $\mathbf{D}(a,b) := \{y = ax + b : x \in \R\} \in \mathcal{A}(2,1)$. Conversely, to every line $\ell = \{y = cx + d : x \in \R\}$ we associate the point $\mathbf{D}^{\ast}(\ell) = (-c,d)$.
Then, it is easy to check that
\begin{equation}\label{form34} p \in \ell \quad \Longleftrightarrow \quad \mathbf{D}^{\ast}(\ell) \in \mathbf{D}(p). \end{equation}
For $(a,b),(c,d) \in [0,1]^{2}$, say, the maps $\mathbf{D}$ and $\mathbf{D}^{\ast}$ are bilipschitz between the Euclidean metric, and the metric on $\mathcal{A}(2,1)$. Therefore the property of "being a $(\delta,s)$-set" is preserved (up to inflating the constants). Now, roughly speaking, Lemma \ref{lemma5} follows from Lemma \ref{lemma4} by first applying the transformations $\mathbf{D},\mathbf{D}^{\ast}$ to the points $P$ and the tubes $\mathcal{T}_{x}$, $x \in P$, respectively. The main technicalities arise from the fact that $\mathcal{T}_{x}$ is a set of $\delta$-tubes, and not a set of lines. Let us ignore this issue for now, and assume that $\mathcal{T}_{x} = \mathcal{L}_{x}$ is actually a $(\delta,s)$-set of lines such that $x \in \ell$ for all $\ell \in \mathcal{L}_{x}$. In this case Lemma \ref{lemma5} is simple to infer from Lemma \ref{lemma4}.

Write $P = \mathbf{D}^{\ast}(\mathcal{L})$ for some $(\delta,t)$-set of lines $\mathcal{L} \subset \mathcal{A}(2,1)$, and write also $\mathcal{L}_{x} = \mathbf{D}(P_{x})$ for some $(\delta,s)$-set of points $P_{x} \subset \R^{2}$. Now, if $\ell \in \mathcal{L}$, then $\mathbf{D}^{\ast}(\ell) = x \in \mathbf{D}(y)$ for all $y \in P_{x}$ by assumption. By \eqref{form34}, this is equivalent to $P_{x} \subset \ell$. Thus, every line $\ell = (\mathbf{D}^{\ast})^{-1}(x) \in \mathcal{L}$, $x \in P$, contains a $(\delta,s)$-set $P_{x} =: P_{\ell}$. This places us in a position to apply Lemma \ref{lemma4}.

A similar argument still works if $\mathcal{L}_{x}$ is replaced by the $(\delta,s)$-set of tubes $\mathcal{T}_{x}$. One only needs to make sure that if $x \in T \in \mathcal{T}_{x}$, then the line $\ell = (\mathbf{D}^{\ast})^{-1}(x)$ is $O(\delta)$-close to a certain $(\delta,s)$-set $P_{\ell}$; this set can be derived from $\mathcal{T}_{x}$ by using the idea above. For the technical details, we refer to \cite[Sections 6.1-6.2]{2021arXiv210704471D}, in particular \cite[Lemma 6.7]{2021arXiv210704471D}.

We are finally equipped to prove Theorem \ref{main2}:

\begin{proof}[Proof of Theorem \ref{main2}]
Fix $s \in [0,2]$, $t \in (1,2]$, $\sigma \in [0,1)$, and $\zeta > 0$. Let $P_{K},P_{E} \subset B^{2}$ be as in the statement of the theorem: thus $P_{K}$ is a $(\delta,t,\delta^{-\epsilon})$-set, and $P_{E}$ is a $(\delta,s,\delta^{-\epsilon})$-set. Recall also the $(\delta,\sigma,\delta^{-\epsilon})$-sets of tubes $\mathcal{T}_{x}$ passing through $x$, for every $x \in P_{E}$, with the property
\begin{equation}\label{form28} |T\cap P_{K}| \geq \delta^{\sigma + \epsilon}|P_{K}|, \quad T \in \mathcal{T}_{x}. \end{equation}
The claim is that
\begin{equation}\label{form30} \sigma \geq s + t - 1 - \zeta \end{equation}
if $\delta,\epsilon > 0$ are chosen small enough, depending only on $\zeta,\sigma,t$.

 By Lemma \ref{lemma5}, the union $\bigcup_{x \in P_{E}} \mathcal{T}_{x}$ contains a non-empty $(\delta,\gamma(\sigma,s),\delta^{-\xi})$-set $\mathcal{T}$, where
 \begin{displaymath} \gamma(\sigma,s) = \sigma + \min\{s,\sigma\}, \end{displaymath}
 and $\xi > 0$ can be made as small as we like by choosing $\epsilon,\delta > 0$ sufficiently small (independently of $s,\sigma$). Now we are prepared to spell out all the requirements on $\epsilon$:
 \begin{equation}\label{form74} 10\xi + 2\epsilon \leq \zeta \quad \text{and} \quad \sigma < 1 - 5\xi - \epsilon \quad \text{and} \quad (t - 1)/2 - 5\xi - \epsilon > 0. \end{equation}
 We may assume that $\xi \geq \epsilon$, so our $(\delta,t,\delta^{-\epsilon})$-set $P_{K}$ is also a $(\delta,t,\delta^{-\xi})$-set (if $\xi < \epsilon$, then both $P_{K}$ and $\mathcal{T}$ are $(\delta,u,\delta^{-\epsilon})$-sets with, and this would work even better in the sequel).

 By \eqref{form28}, we have
 \begin{equation}\label{form29} |P_{K}||\mathcal{T}| \cdot \delta^{\sigma + \epsilon} \leq \sum_{T \in \mathcal{T}} |T\cap P_{K}| = |\mathcal{I}(P_{K},\mathcal{T})|. \end{equation}
 We next compare this lower bound for $|\mathcal{I}(P_{K},\mathcal{T})|$ against the upper bounds from Theorem \ref{FuRen}. Recall the exponent "$\kappa$" from the statement of Theorem \ref{FuRen}. Since $P_{K}$ is a $(\delta,t)$-set and $\mathcal{T}$ is a $(\delta,\gamma(\sigma,s))$-set, the useful quantity for us is
  \begin{displaymath} \bar{\kappa}(s,\sigma,t) := \kappa(t,\gamma(\sigma,s)) = \min\{1/2,1/(t + \gamma(\sigma,s) - 1)\}. \end{displaymath}
The remainder of the proof splits into four cases:
 \begin{enumerate}[(i)]
 \item Assume first that $s \leq \sigma$. Thus $\gamma(\sigma,s) = s + \sigma$, so $\mathcal{T}$ is a $(\delta,s + \sigma,\delta^{-\xi})$-set.
 \begin{itemize}
  \item[(a)] Assume that $\bar{\kappa}(s,\sigma,t) = 1/2$. Then, by Theorem \ref{FuRen},
  \begin{displaymath} |\mathcal{I}(P_{K},\mathcal{T})| \leq |P_{K}||\mathcal{T}| \cdot \delta^{(t + (s + \sigma) - 1)/2 - 5\xi}. \end{displaymath}
  Comparing this against \eqref{form29} yields $\delta^{2\sigma+2\epsilon} \leq \delta^{t + s + \sigma - 1 - 10\xi}$, and therefore $\sigma \geq s + t - 1 - 10\xi - 2\epsilon$. This yields \eqref{form30}, since we assumed that $10\xi + 2\epsilon \leq \zeta$.
  \item[(b)] Assume that $\bar{\kappa}(s,\sigma,t) = 1/(t + s + \sigma - 1)$. Then,
  \begin{displaymath} |\mathcal{I}(P_{K},\mathcal{T})| \leq |P_{K}||\mathcal{T}| \cdot \delta^{1 - 5\xi}. \end{displaymath}
  Comparing against \eqref{form29} yields $\delta^{\sigma} \leq \delta^{1 - 5\xi - \epsilon}$, contradicting \eqref{form74}.
 \end{itemize}
 \item Assume second that $s > \sigma$. Thus $\gamma(\sigma,s) = 2\sigma$, so $\mathcal{T}$ is a $(\delta,2\sigma,\delta^{-\xi})$-set.
 \begin{itemize}
 \item[(a)] Assume that $\bar{\kappa}(s,\sigma,t) = \tfrac{1}{2}$. Then,
 \begin{displaymath} |\mathcal{I}(P_{K},\mathcal{T})| \leq |P_{K}||\mathcal{T}| \cdot \delta^{(t + 2\sigma - 1)/2 - 5\xi}. \end{displaymath}
 Comparing this against \eqref{form29} yields $1 \leq \delta^{(t - 1)/2 - 5\xi - \epsilon}$, contradicting \eqref{form74}.
 \item[(b)] Assume finally that $\bar{\kappa}(s,\sigma,t) = 1/(t + 2\sigma - 1)$. Then,
 \begin{displaymath} |\mathcal{I}(\mathcal{P}_{K},\mathcal{T})| \leq |P_{K}||\mathcal{T}| \cdot \delta^{1 - 5\xi}. \end{displaymath}
 As in case (i)(b) above, this leads to the impossible situation $\delta^{\sigma} \leq \delta^{1 - 5\xi - \epsilon}$.
 \end{itemize}
 \end{enumerate}
 We have now seen that the cases (i)(b) and (ii)(a)-(b) are not possible for $\delta,\epsilon$ small enough, depending only on $\zeta > 0$, $\sigma < 1$ and $t > 1$. Case (i)(a), on the other hand, yields the desired inequality \eqref{form30} for $10\xi + 2\epsilon \leq \zeta$. This completes the proof of Theorem \ref{main2}. \end{proof}

 \subsection{Proof of Theorem \ref{main}}

Theorem \ref{main} follows immediately from the following "thin tubes version", whose proof is further based on Theorem \ref{main2} from the previous section.

 \begin{thm}\label{mainThinTubes} Let $s \in [0,2]$, $t \in (1,2]$, $0 \leq \sigma < \min\{s + t - 1,1\}$, $C > 0$ and $\epsilon \in (0,1]$. Then, there exists $K = K(C,\epsilon,s,\sigma,t) > 0$ such that the following holds. Assume that $\mu,\nu \in \mathcal{P}(B^{2})$ satisfy $\mu(B(x,r)) \leq Cr^{s}$ and $\nu(B(x,r)) \leq Cr^{t}$, or alternatively $I_{s}(\mu) \leq C$ and $I_{t}(\nu) \leq C$. Then $(\mu,\nu)$ has $(\sigma,K,1 - \epsilon)$-thin tubes.

In particular, whenever $s \in [0,2]$, $t \in (1,2]$, $I_{s}(\mu) < \infty$ and $I_{t}(\nu) < \infty$, then $(\mu,\nu)$ has $\sigma$-thin tubes for every $0 \leq \sigma < \min\{s + t - 1,1\}$.
\end{thm}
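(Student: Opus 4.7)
My plan is to argue by contradiction, using Theorem \ref{main2} as the main engine. First, a routine reduction: in the energy case $I_s(\mu), I_t(\nu) \le C$, one extracts compact subsets $K_\mu \subset \spt(\mu)$, $K_\nu \subset \spt(\nu)$ of mass $\ge 1 - \epsilon/4$ on which the restrictions satisfy Frostman conditions, and observes that thin tubes for the restrictions implies thin tubes for the original pair (with $K$ inflated). So I may assume the Frostman hypothesis throughout. Next, fix $\zeta > 0$ small enough that $\sigma + 2\zeta < \min\{s + t - 1, 1\}$, and let $\epsilon_0 = \epsilon(\sigma, t, \zeta) > 0$ and $\delta_0 = \delta_0(\sigma, t, \zeta)$ be the constants furnished by Theorem \ref{main2}.

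The candidate good set is built by excising heavy tubes. For each dyadic $\delta \le 1$, let
\[
H_{x,\delta} := \bigcup \{ T : T \text{ is a } \delta\text{-tube with } x \in T,\ \nu(T) > K\delta^\sigma\}, \qquad B_\delta := \{(x,y) : y \in H_{x,\delta}\},
\]
and set $G := (X \times Y) \setminus \bigcup_\delta B_\delta$. A direct check shows $G$ has the desired thin tubes property (for any $r$-tube $T \ni x$, one dichotomises according to whether $\nu(T) > Kr^\sigma$ or not). Since $\nu(T) \le 1$, the collection $H_{x,\delta}$ is empty for $\delta > \delta_{\max} := K^{-1/\sigma}$, so the union runs over $O(\log(1/\delta_{\max}))$ many scales $\delta \in [0, \delta_{\max}]$. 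Choosing $K$ large forces $\delta_{\max} \le \delta_0$. The goal then reduces to proving a \emph{per-scale} bound
\[
(\mu \times \nu)(B_\delta) \le \delta^{\eta_0}, \qquad \delta \in (0, \delta_{\max}] \cap 2^{-\N},
\]
with $\eta_0 = \eta_0(s, t, \sigma, \zeta, C) > 0$; summing a geometric series in $\delta$ gives $(\mu \times \nu)\big(\bigcup_\delta B_\delta\big) \lesssim \delta_{\max}^{\eta_0}/\eta_0 \lesssim K^{-\eta_0/\sigma}$, which is $< \epsilon$ once $K$ is taken sufficiently large in terms of $\epsilon, \eta_0, \sigma$.

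The per-scale bound is where Theorem \ref{main2} is invoked, and this extraction step is the main obstacle. Assume for contradiction that $(\mu \times \nu)(B_\delta) > \delta^{\eta_0}$ for some $\delta \in (0, \delta_{\max}]$. By Fubini, on a set of $\mu$-measure $\gtrsim \delta^{\eta_0}$ the fibre $B_\delta|_x$ has $\nu$-mass $\gtrsim \delta^{\eta_0}$, and this fibre is covered by the heavy tubes through $x$. From this data I plan to extract: a $\delta$-separated $(\delta, s, \delta^{-\epsilon_0})$-set $P_E \subset X$ from the heavy-fibre set, and a $\delta$-separated $(\delta, t, \delta^{-\epsilon_0})$-set $P_K \subset Y$ from $\spt \nu$, using the discrete Frostman lemma \cite[Proposition A.1]{FasslerOrponen14} after standard uniformisation of $\mu$ and $\nu$ at scale $\delta$. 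For each $x \in P_E$, I will select a $(\delta, \sigma, \delta^{-\epsilon_0})$-set of heavy tubes $\mathcal{T}_x$: non-concentration of the directions comes from a multi-scale/arc pigeonhole, since any arc-concentrated family of heavy tubes through $x$ would aggregate too much $\nu$-mass in a narrow wedge, violating the Frostman bound on $\nu$ once $\eta_0$ is chosen much smaller than $\epsilon_0$. Finally, the heaviness $\nu(T) > K\delta^\sigma$ translates (via the Frostman normalisation of $\nu$ on $P_K$) into the incidence lower bound $|T \cap P_K| \ge \delta^{\sigma + \epsilon_0}|P_K|$ required in Theorem \ref{main2}.

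Applying Theorem \ref{main2} to this configuration yields $\sigma \ge s + t - 1 - \zeta$, contradicting the choice $\sigma + 2\zeta < s + t - 1$ when $s + t \le 2$, and contradicting $\sigma < 1 - \zeta$ (which is forced by $\sigma < 1$ and $s + t > 2$) when $s + t > 2$. The final ``in particular'' statement then follows by the standard approximation: if $I_s(\mu) < \infty$ and $I_t(\nu) < \infty$, then for any $\epsilon > 0$ we can find compact sets of mass $\ge 1-\epsilon$ on which Frostman holds, apply the quantitative version just proved, and take $\epsilon \to 0$ along a sequence (with $K$ possibly diverging). The key technical difficulty I anticipate is step 7: making sure the pigeonholed tube families $\mathcal{T}_x$ genuinely satisfy the $(\delta, \sigma, \delta^{-\epsilon_0})$-set property after all the extractions, which is likely the content of the auxiliary Lemma \ref{lemma7} alluded to in the outline.
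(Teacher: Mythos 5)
There is a genuine gap, located exactly where you flag ``the key technical difficulty.'' Your plan is to prove a single per-scale bound $(\mu \times \nu)(B_\delta) \le \delta^{\eta_0}$ directly, by extracting from the heavy tube family through each $x$ a $(\delta,\sigma,\delta^{-\epsilon_0})$-set $\mathcal{T}_x$ and invoking Theorem \ref{main2}. This extraction cannot be made to work from the Frostman conditions alone. To even be a $(\delta,\sigma,\delta^{-\epsilon_0})$-set, the family $\mathcal{T}_x$ must contain at least $\delta^{-\sigma+\epsilon_0}$ tubes, and the directions must be non-concentrated at the $\sigma$-scale. But the only upper bound you have on the $\nu$-mass of a single $\delta$-tube is the Frostman estimate $\nu(T) \lesssim \delta^{t-1}$, and a tube can realize this. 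So $x$ may have a \emph{single} heavy tube $T$ with $\nu(T) \sim \delta^{t-1} \gg K\delta^\sigma$ (this is exactly the regime $\sigma > t-1$, which is the whole content of the theorem beyond the trivial bound), whence $\nu(B_\delta|_x)$ is already $\sim \delta^{t-1}$ while $|\mathcal{T}_x|$ is a bounded number, nowhere near a $(\delta,\sigma)$-set. The arc-pigeonhole you propose controls concentration of heavy tube directions only at the $(t-1)$-dimensional level (since the Frostman bound on wedges is $\nu(\text{$\rho$-wedge}) \lesssim \rho^{t-1}$), which is strictly weaker than the $\sigma$-dimensional non-concentration Theorem \ref{main2} requires. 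Consequently the per-scale bound, as you set it up, is not accessible by a single application of Theorem \ref{main2}.

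The paper resolves this by bootstrapping: Lemma \ref{lemma7} is not an auxiliary verification step but the engine of the whole proof, and Theorem \ref{mainThinTubes} is obtained by iterating it $O(1/\eta)$ times. The inductive input is that $(\mu,\nu)$ already has $(\sigma,K,1-\epsilon)$-thin tubes; this provides the crucial \emph{upper} bound $\nu(T \cap G|_x) \le Kr^{\sigma}$ on the mass of any tube through $x$, which in turn forces the lower bound $|\mathcal{T}_x| \gtrsim r^{-\sigma + O(\eta)}$ (one needs many tubes to cover the mass $r^{O(\eta)}$ of $Y_x$ when each tube holds only $\lesssim r^{\sigma - \eta}$) and the $(r,\sigma,r^{-O(\eta)})$-set property via $\nu(T^{(\rho)} \cap Y_x) \lesssim r^{-\eta}\rho^\sigma$. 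The iteration starts from the free Frostman base case $(t-1,K_0,1)$-thin tubes and climbs in increments of $\eta$. Your framework, in which the good set $G$ is fixed once and for all as the complement of heavy tubes at all scales, has no place for this inductive hypothesis, and the ``per-scale bound'' you isolate is simply not provable without it; any fix would effectively reintroduce the iteration. I would also note a minor point: the per-scale bound with exponent $\eta_0$ would have to have $\eta_0 \le t-1-\sigma + O(1)$ in the worst case, which can be negative when $\sigma > t-1$; that is another sign that the one-shot strategy is structurally too coarse.
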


The proof of Theorem \ref{mainThinTubes} is similar to the proof of Corollary \ref{cor:radial}. We establish an "$\epsilon$-improvement" version of the result, Lemma \ref{lemma7} below, which can then be iterated multiple times to derive Theorem \ref{mainThinTubes}.

 \begin{lemma}\label{lemma7}
	Let $s \in [0,2]$, $t \in (1,2]$, and $0 \leq \sigma < \min\{s + t - 1,1\}$. Let $\epsilon \in (0,\tfrac{1}{10})$ and $C,K > 0$. Let $\mu,\nu\in\mathcal{P}(B^2)$ such that $\mu(B(x,r)) \leq Cr^{s}$ and $\nu(B(y,r)) \leq Cr^{t}$ for all $x,y \in \R^{2}$ and $r > 0$. If $(\mu, \nu)$  has $(\sigma, K, 1 - \epsilon)$-thin tubes, then there exist $\eta = \eta(s,\sigma,t) > 0$ and  $K' = K'(C,K,\epsilon,\sigma, s,t) > 0$ such that $(\mu, \nu)$ has $(\sigma+\eta, K', 1 - 4\epsilon)$-thin tubes. Moreover, $\eta(s,\sigma,t)$ is bounded away from zero on any compact subset of
\begin{equation}\label{form75} \Omega := \{(s,\sigma,t) \in [0,2] \times [0,1) \times (1,2] : \sigma < \min\{s + t - 1,1\}\}. \end{equation}
\end{lemma}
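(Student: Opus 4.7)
The plan is to argue by contradiction and reduce to the $\delta$-discretised incidence estimate of Theorem \ref{main2}. Suppose that $(\mu,\nu)$ does not have $(\sigma+\eta, K', 1-4\epsilon)$-thin tubes, where $\eta=\eta(s,\sigma,t)>0$ and $K'=K'(C,K,\epsilon,\sigma,s,t)>0$ will be fixed at the end.

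The first phase mimics the opening of the proof of Lemma \ref{lem: radial2}. Let $G \subset X \times Y$ be the good set from the $(\sigma,K,1-\epsilon)$-thin tubes assumption, so that $(\mu \times \nu)(G) \geq 1-\epsilon$ and $\nu(T \cap G|_x) \leq K\rho^{\sigma}$ for every $\rho$-tube $T \ni x$. For each $x \in X$ and dyadic $r \in 2^{-\N}$, define $\mathcal{T}''_{x,r}$ to be the collection of $r$-tubes through $x$ with $\nu(T \cap G|_x) \geq K' r^{\sigma+\eta}$, and extract the discretised subfamily $\mathcal{T}'_{x,r} \subset \mathcal{T}^r$ against a fixed covering family $\mathcal{T}^r$ with the usual separation properties. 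Negating the $(\sigma+\eta,K',1-4\epsilon)$-thin tubes conclusion and repeating the mass-balancing of Lemma \ref{lem: radial2} yields $(\mu \times \nu)(H) \geq \epsilon$ with $H := G \cap \bar H$, where $\bar H$ records the pairs $(x,y)$ realising a heavy tube. Choosing $K'$ large enough forces the relevant scales below a threshold $r_0$, and a dyadic pigeonhole fixes one scale $r \le r_0$ with $(\mu \times \nu)(H_r) \geq 2 r^{\eta}$, a set $\mathbf{X} \subset X$ of $\mu$-mass $\geq r^{\eta}$, and for each $x \in \mathbf{X}$ a cleaned family $\mathcal{T}_x \subset \mathcal{T}'_{x,r}$ whose tubes all satisfy $\nu(T \cap G|_x) \geq K' r^{\sigma+3\eta}$.

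The second phase is the only substantive technical step, and in my view the main obstacle: verifying that each $\mathcal{T}_x$ is an $(r,\sigma,r^{-A\eta})$-set of tubes through $x$, for some absolute constant $A$. The upper density estimate is inherited from the thin-tubes hypothesis on $(\mu,\nu)$ itself: for every $\rho$-tube $T_0 \ni x$ with $\rho \geq r$, one has $\nu(T_0 \cap G|_x) \leq K\rho^{\sigma}$, which caps the number of $T \in \mathcal{T}_x$ with $T \subset T_0^{(\rho)}$, provided one knows that these tubes contribute their mass in a bounded-overlap fashion. In Lemma \ref{lem: radial2} this is free because the supports are separated; here it has to be engineered. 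Using the $t$-Frostman condition on $\nu$ with $t>1$, pick $\kappa$ with $\kappa t > \sigma+3\eta$; then $\nu(B(x,r^{\kappa})) \leq Cr^{\kappa t}$ is negligible compared with $K' r^{\sigma+3\eta}$, so at least half of each heavy tube's mass lives in $T \setminus B(x,r^{\kappa})$, where tubes through $x$ have bounded overlap via \eqref{eq:separated-tubes}. Combined with the lower bound $|\mathcal{T}_x| \gtrsim r^{-\sigma+O(\eta)}$ that comes from $\nu(H_r|_x) \geq r^{O(\eta)}$ divided by the per-tube mass ceiling, this yields the desired $(r,\sigma,r^{-A\eta})$-set property. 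It is precisely at this step that the hypothesis $t>1$ enters the proof.

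In the third phase I extract the discretised objects required by Theorem \ref{main2}. An $r$-separated $(r,s,r^{-A\eta})$-set $P_E \subset \mathbf{X}$ comes from the discrete Frostman lemma \cite[Proposition A.1]{FasslerOrponen14} applied to $\mu|_{\mathbf{X}}$, and an $r$-separated $(r,t,O(1))$-set $P_K \subset \spt \nu$ of cardinality $\sim r^{-t}$ arises from a maximal $r$-separated cover, the cardinality bounds and the $(\delta,t)$-set property both following from the $t$-Frostman condition. The per-tube bound $\nu(T \cap G|_x) \geq K' r^{\sigma+3\eta}$, combined with $\nu(B(p,r)) \leq Cr^{t}$ on each $r$-ball around $p \in P_K$, translates into the incidence bound $|T \cap P_K| \gtrsim K' r^{\sigma+3\eta-t} \gtrsim r^{\sigma+A\eta}|P_K|$. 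Fixing $\zeta = \tfrac{1}{2}(\min\{s+t-1,1\}-\sigma) > 0$, Theorem \ref{main2} supplies $\epsilon_{\mathrm{Thm}} = \epsilon_{\mathrm{Thm}}(s,\sigma,t,\zeta) > 0$; choosing $\eta$ so small that $A\eta \leq \epsilon_{\mathrm{Thm}}$ and then $K'$ large enough for the pigeonholing in the first phase, the extracted data meets the hypotheses of Theorem \ref{main2}, and its conclusion gives $\sigma \geq s+t-1-\zeta > \sigma$, the desired contradiction. The uniformity of $\eta(s,\sigma,t)$ on compact subsets of $\Omega$ follows from the analogous uniformity of $\epsilon_{\mathrm{Thm}}$ together with the continuous dependence of $\zeta$ on $(s,\sigma,t)$.
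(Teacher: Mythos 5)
Your high-level strategy is the same as the paper's: argue by contradiction, run the thin-tubes pigeonholing to extract a scale $r$, a set $\mathbf{X}$ of viewpoints, and for each $x\in\mathbf{X}$ a family $\mathcal{T}_x$ of heavy $r$-tubes; then feed discretised versions of $\mathbf{X}$, $\spt\nu$, and $\mathcal{T}_x$ into Theorem~\ref{main2}. However, there are two places where the execution breaks down, and both are precisely the places you identify as delicate.

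\textbf{The overlap estimate.} You claim that after discarding $B(x,r^\kappa)$ (where $\kappa$ is chosen so that $\kappa t>\sigma+3\eta$), the tubes of $\mathcal{T}_x$ through $x$ ``have bounded overlap via \eqref{eq:separated-tubes}.'' This is not what \eqref{eq:separated-tubes} gives. That estimate says that the tubes of $\mathcal{T}^r$ through two points $x,y$ number $\lesssim |x-y|^{-1}$, so outside $B(x,r^\kappa)$ the overlap of tubes through $x$ is only $\lesssim r^{-\kappa}$. Because $\kappa t>\sigma+3\eta$ forces $\kappa$ to be a fixed positive constant of the order $\sigma/t$ (not $O(\eta)$), the factor $r^{-\kappa}$ is a genuine negative power of $r$ that cannot be absorbed into the $r^{-O(\eta)}$ budget. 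Plugging this overlap into the upper-density estimate for arcs of length $\rho$ gives $|\mathcal{T}_x\cap I|\lesssim r^{-\kappa}\cdot r^{-\sigma-O(\eta)}\rho^\sigma$, which after dividing by $|\mathcal{T}_x|\gtrsim r^{-\sigma+O(\eta)}$ yields a constant $r^{-\kappa-O(\eta)}$ rather than $r^{-O(\eta)}$, so the $(r,\sigma,r^{-O(\eta)})$-set property does \emph{not} follow from this argument. (Choosing $\kappa=O(\eta)$ instead would make the near-$x$ mass $\nu(B(x,r^\kappa))\le Cr^{\kappa t}$ comparable to or larger than $r^{\sigma+3\eta}$, so the dichotomy is genuine; your two requirements on $\kappa$ are incompatible.) The fact that $t>1$ is indeed what makes the cardinality bound $|\mathcal{T}'_{x,r}|\lesssim r^{-\sigma-\eta}$ work without disjoint supports, but it does not repair the arc-localised overlap problem you are trying to solve.

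\textbf{The construction of $P_K$.} Taking a maximal $r$-separated subset of $\spt\nu$ does not produce a set of cardinality $\sim r^{-t}$, nor an $(r,t,O(1))$-set: the Frostman bound $\nu(B(y,r))\le Cr^t$ only gives the \emph{lower} bound $|P_K|\gtrsim r^{-t}$, and in general a maximal $r$-net in $\spt\nu$ can be as large as $r^{-2}$. Consequently the final step ``$|T\cap P_K|\gtrsim K'r^{\sigma+3\eta-t}\gtrsim r^{\sigma+A\eta}|P_K|$'' is not justified, since it needs the matching upper bound $|P_K|\lesssim r^{-t+O(\eta)}$. The paper's proof handles exactly this issue by pigeonholing the level sets
\[
Y_j:=\{y\in Y:\, 2^{-j-1}Cr^t<\nu(B(y,r))\le 2^{-j}Cr^t\},
\]
fixing a single index $j$ such that a positive-density subset of $P_X$ satisfies $\nu(Y_x\cap Y_j)\ge r^{3\eta}$, and then taking a maximal $r$-separated set $P_Y\subset Y_j$. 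On $Y_j$ the per-$r$-ball mass is controlled \emph{from both sides} up to a factor $2$, which is what delivers $|P_Y|\sim 2^{j}r^{-t+O(\eta)}$, the $(r,t,r^{-O(\eta)})$-set property, and the incidence lower bound $|P_Y\cap 2T|\gtrsim r^{\sigma+O(\eta)}|P_Y|$ simultaneously. This pigeonholing step is essential and is missing from your argument.

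In summary: the scaffolding, the role of Theorem~\ref{main2}, and the choice $\zeta=\tfrac12(\min\{s+t-1,1\}-\sigma)$ are all on target, and you correctly flag the $(r,\sigma,\cdot)$-set property of $\mathcal{T}_x$ as the place where separated supports would otherwise be needed. But the specific repair you propose (bounded overlap outside a small ball) does not work, and the discretisation of $Y$ into $P_K$ skips a pigeonholing that the paper needs and that your version cannot do without.
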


Before proving Lemma \ref{lemma7}, we complete the proof of Theorem \ref{mainThinTubes}:

\begin{proof}[Proof of Theorem \ref{mainThinTubes} assuming Lemma \ref{lemma7}] The starting point is that $(\mu,\nu)$ has $(t - 1,K_{0},1)$-thin tubes for some $K_{0} \sim C$ by the Frostman condition on $\nu$ alone: $\nu(T) \lesssim C \cdot r^{t - 1}$ for all $r$-tubes $T \subset \R^{2}$. In particular, $(\mu,\nu)$ has $(t - 1,K_{0},1 - \bar{\epsilon})$-thin tubes for every $\bar{\epsilon} \in (0,\tfrac{1}{10})$. If $s = 0$ or $t = 2$, we are done. Otherwise $\min\{s + t - 1,1\} > t - 1$, and we need to apply Lemma \ref{lemma7} a few times. Fix $0 \leq \sigma < \min\{s + t - 1,1\}$, and let
\begin{displaymath} \eta := \eta(s,\sigma,t) := \inf \{\eta(s,\sigma',t) : t - 1 \leq \sigma' \leq \sigma\}, \end{displaymath}
where $\eta(s,\sigma',t)$ is the function in Lemma \ref{lemma7}. We have $\eta > 0$, since $\eta(s,\sigma',t)$ is bounded away from zero on compact subsets of $\Omega$, as in \eqref{form75}. We also choose
\begin{displaymath} \bar{\epsilon} := \epsilon \cdot 4^{-1/\eta}/100. \end{displaymath}
where $\epsilon > 0$ is the constant given in the statement.

The first application of Lemma \ref{lemma7} implies that $(\mu,\nu)$ has $(t - 1 + \eta,K_{1},1 - 4\bar{\epsilon})$-thin tubes for some $K_{1} = K_{1}(C,\epsilon,s,\sigma,t) > 0$.\footnote{The upper bound for "$K_{1}$" in Lemma \ref{lemma7} also depends on the lower bound for $\epsilon > 0$, so the argument here does not show that $(\mu,\nu)$ has $(t - 1 + \eta,K_{1},1)$-thin tubes. This would indeed be false in general.} If $t - 1 + \eta > \sigma$, we are done. Otherwise, a second application of Lemma \ref{lemma7} shows that $(\mu,\nu)$ has $(t - 1 + 2\eta,K_{2},1 - 4^{2}\bar{\epsilon})$-thin tubes for some $K_{2} > 0$. We proceed in the same manner. After $N \leq 1/\eta$ steps, we find that $(\mu,\nu)$ has $(\sigma,K_{N},1 - 4^{N}\bar{\epsilon})$-thin tubes for some $K_{N} > 0$, and the iteration terminates. At this point, notice that $4^{N}\bar{\epsilon} \leq 4^{1/\eta}\bar{\epsilon} \leq \epsilon$. Also, $K_{N}$ only depends on $C,\epsilon,s,\sigma,t$ and $N = N(\eta) = N(s,\sigma,t)$, as desired. This completes the proof. \end{proof}

Finally, we prove Lemma \ref{lemma7}:

\begin{proof}[Proof of Lemma \ref{lemma7}]
The scheme of the proof is very similar to that of Lemma \ref{lem: radial2}. The main difference lies in the geometric input, which is provided by Theorem \ref{main2}.

We argue by contradiction. Assume  that $(\mu, \nu)$ does not have $(\sigma+\eta, K', 1-4\epsilon)$-thin tubes for $K'=K'(C,K,\epsilon,\sigma,s,t) \ge 1$ and $\eta = \eta(s,\sigma,t) > 0$ to be determined in the course of the proof. In fact, we explain immediately the dependence of $\eta$ on $s,t,\sigma$. Since $\sigma < s + t - 1$, we may choose $\zeta = \zeta(s,\sigma,t) > 0$ such that
\begin{equation}\label{form69} \sigma < s + t - 1 - \zeta. \end{equation}
The proof will be concluded by applying Theorem \ref{main2} with the parameters $s,t$, and this $\zeta(s,\sigma,t) > 0$. Theorem \ref{main2} gives us a parameter $\eta' = \eta'(\sigma,t,\zeta) = \eta'(s,\sigma,t) > 0$ associated with these constants. Our choice of $\eta = \eta(s,t,\sigma)$ needs to be so small that $C\eta < \eta'$ for a certain absolute constant $C > 0$. Since the value of $\eta'(\sigma,t,\zeta)$ is bounded away from zero on compact subsets of $[0,1) \times (1,2] \times [0,1)$, the value of $\eta$ will be (or can be taken to be) bounded away from zero on compact subsets of the set $\Omega$ in \eqref{form75}.

We then begin the proof in earnest. Write $X=\spt(\mu)$, $Y=\spt(\nu)$. Since $(\mu, \nu)$ has $(\sigma, K, 1-\epsilon)$--thin tubes, there is a Borel set $G \subset X \times Y$ with $(\mu\times\nu)(G)>1- \epsilon$ such that
\begin{equation}\label{form66a}
\nu(T\cap G|_x) \le K\cdot r^{\sigma} \text{ for all } r > 0 \text{ and  all } r\text{-tubes } T \text{ containing } x\in X. \end{equation}
For every $x \in X$ and $r \in 2^{-\N}$, let $\mathcal{T}_{x,r}''$ consist of those $r$-tubes, with $r \in 2^{-\N}$, which contain $x$ and satisfy
\begin{equation}\label{form67} \nu(T\cap G|_{x}) \geq \tfrac{K'}{2} \cdot r^{\sigma + \eta}. \end{equation}
We pick a maximal subset of $\mathcal{T}_{x,r}''$ with $r$-separated angles, and inflate each element of this subset by a factor of $10$. Let $\mathcal{T}_{x,r}'$ be the family of $10r$-tubes so obtained. Evidently,
\begin{equation}\label{form62a} (\cup \mathcal{T}_{x,r}'') \cap B^{2} \subset \cup \mathcal{T}_{x,r}' \quad \text{and} \quad |\mathcal{T}_{x,r}'| \lesssim r^{-\sigma - \eta}. \end{equation}
We define $\bar{H}_{r} := \{(x,y) \in X \times Y : y \in \cup \mathcal{T}_{x,r}'\}$ and $\bar{H} := \bigcup_{r} \bar{H}_{r}$. We claim that
\begin{displaymath} (\mu \times \nu)(\bar{H}) \geq 2\epsilon. \end{displaymath}
Indeed, assume to the contrary that $(\mu \times \nu)(\bar{H}) < 2\epsilon$, thus $(\mu \times \nu)(\bar{H}^{c}) > 1 - 2\epsilon$, and
\begin{displaymath} (\mu \times \nu)(\bar{H}^{c} \cap G) \geq 1 - 3\epsilon. \end{displaymath}
Since $(\mu,\nu)$ does not have $(\sigma + \eta,K',1 - 4\epsilon)$-thin tubes by assumption, we infer that there exists a point $x \in X$, and an $r$-tube $T \subset \R^{2}$, $r \in 2^{-\N}$, containing $x$ which satisfies
\begin{displaymath} \nu(T\cap G|_{x}) \geq \nu(T\cap (\bar{H}^{c} \cap G)|_{x}) \geq \tfrac{K'}{2} \cdot r^{\sigma + \eta}. \end{displaymath}
However, this means by the definition of $\mathcal{T}_{x,r}''$, and the inclusion in \eqref{form62a}, that $T \cap B^{2} \subset \bar{H}|_{x}$, so it is absurd that $\nu(T\cap (\bar{H}^{c})|_{x}) > 0$. This contradiction completes the proof of $(\mu \times \nu)(\bar{H}) \geq 2\epsilon$.

We now set
\begin{displaymath} H := G \cap \bar{H} \quad \text{and} \quad H_{r} := G \cap \bar{H}_{r}, \end{displaymath}
and we infer from a combination of $(\mu \times \nu)(G) \geq 1 - \epsilon$ and $(\mu \times \nu)(\bar{H}) \geq 2\epsilon$ that
\begin{displaymath} (\mu \times \nu)(H) \geq \epsilon. \end{displaymath}
Fix $r_0=r_0(C,K,\epsilon,s,\sigma)\in 2^{-\N}$ such that $\sum_{r\le r_0} r^\eta < \epsilon/2$ (recall that $\eta$ was determined by $s,\sigma,t$). Later we will impose further upper bounds on $r_0$, always depending on $C,K,\epsilon,s,\sigma,t$ only. If $K'$ is taken so large that $(K'/2) \cdot r_0^{\sigma+\eta}>1$ (hence in terms of $C,K,\epsilon,s,\sigma,t$ only), then  we see from \eqref{form67} that $\mathcal{T}_{x,r}' =\emptyset$ for all $r > r_0$. This in particular implies that $H_{r}|_{x} \subset \cup \mathcal{T}_{x,r}'$ is empty for all $x \in X$ and $r > r_{0}$. In other words $H_{r} = \emptyset$ for $r > r_{0}$, which implies that $H$ is contained in the union of the sets $H_{r}$ with $r \leq r_{0}$. Since $(\mu \times \nu)(H) \geq \epsilon$, it now follows from the choice of $r_{0}$ that
\begin{displaymath} (\mu \times \nu)(H_{r}) \geq 2r^{\eta} \quad \text{for some} \quad r \leq r_{0}. \end{displaymath}
We will fix this value of "$r$" for the remainder of the proof. We define
\begin{displaymath} \mathbf{X} := \{x \in X : \nu(H_{r}|_{x}) \geq r^{\eta}\}, \end{displaymath}
and note that $\mu(\mathbf{X}) \geq r^{\eta}$. Recall from the definition of $\bar{H}_{r} \supset H_{r}$ that the fibres $H_{r}|_x$ are covered by the tube families $\mathcal{T}_{x,r}'$, and by \eqref{form62a} that $|\mathcal{T}_{x,r}'| \lesssim r^{ -\sigma -\eta}$.  For $x\in \mathbf{X}$, define
\[
\mathcal{T}_x := \{ T\in\mathcal{T}'_{x,r}: \nu(T\cap H_{r}|_{x}) \geq r^{\sigma+3\eta}\},\quad Y_x = (H_{r}|_{x}) \cap \left( \cup \mathcal{T}_x \right).
\]
Since $\nu(H_{r}|_{x}) \geq r^{\eta}$ for $x \in \mathbf{X}$, we have $\nu(Y_{x}) \geq r^{2\eta}$ for all $x \in \mathbf{X}$. For every $x \in \mathbf{X}$, the set $\mathcal{T}_x$ of $r$-tubes covers $Y_x$ and satisfies
\begin{equation}\label{form68}
r^{\sigma + 3\eta} \leq  \nu(T\cap Y_x)   \leq r^{\sigma-\eta}, \qquad T\in \mathcal{T}_x,
\end{equation}
where the upper bound follows from $Y_{x} \subset H|_{x} \subset G|_{x}$ and \eqref{form66a}. In fact, more generally
\[
\nu(T^{(\rho)}\cap Y_x) \leq r^{-\eta}\rho^{\sigma}, \qquad \rho\in [r, 1], \, T\in \mathcal{T}_x.
\]
Putting these facts together, we see that
\begin{equation} r^{-\sigma + 3\eta} \leq r^{-\sigma + \eta} \cdot \nu(Y_{x})  \lesssim |\mathcal{T}_{x}| \leq |\mathcal{T}_{x,r}'| \lesssim r^{-\sigma - \eta}, \qquad x \in \mathbf{X}, \end{equation}
and $\mathcal{T}_{x}$ is an $(r, \sigma, r^{-4\eta})$--set for $x \in \mathbf{X}$. 

We now discretise everything at scale $r$. First, apply the discrete Frostman's Lemma \cite[Proposition A.1]{FasslerOrponen14} to obtain an $(r,s,r^{-O(\eta)})$-set $P_{X} \subset \mathbf{X}$. We would also like to select an $(r,t,r^{-O(\eta)})$-set $P_{Y} \subset Y$, but we need to do this a little more carefully. For each index $j \geq 0$, let
\begin{displaymath} Y_{j} := \{y \in Y : 2^{-j - 1} \cdot Cr^{t} < \nu(B(y,r)) \leq 2^{-j} \cdot Cr^{t}\}. \end{displaymath}
Then $Y$ is contained in the union of the sets $Y_{j}$. In particular, each of the sets $Y_{x}$,  $x \in P_{X}$, is contained in this union. Therefore, for each $x \in P_{X}$, there exists $j = j(x) \geq 0$ such that $\nu(Y_{x} \cap Y_{j}) \geq r^{3\eta}$ (since there are only $\leq C\log(1/r) \leq r^{-\eta}$ choices of "$j$" which need to be considered here). By the pigeonhole principle, we may then find a subset $P_{X}' \subset P_{X}$ such that $|P_{X}'| \geq r^{\eta}|P_{X}|$, and $j(x) = j$ is constant for $x \in P_{X}'$. Since $P_{X}'$ remains an $(r,s,r^{-O(\eta)})$-set, we keep denoting $P_{X}'$ by $P_{X}$ in the sequel.

For the index "$j$" found above, let $P_{Y} \subset Y_{j}$ be a maximal $r$-separated set. Obviously $|P_{Y}| \lesssim 2^{j} \cdot C r^{-t}$, but since, for any $x\in P_X$,
\begin{displaymath} r^{3\eta} \leq  \nu(Y_{x} \cap Y_{j}) \lesssim (2^{-j} \cdot Cr^{t}) \cdot |P_{Y}|, \end{displaymath}
we also have the nearly matching lower bound $|P_{Y}| \gtrsim C^{-1} \cdot 2^{j}r^{-t + 3\eta}$. After this observation, it follows from the calculation
\begin{displaymath} |P_{Y} \cap B(y,\rho)| \cdot (2^{-j} \cdot Cr^{t}) \lesssim \nu(B(y,2\rho)) \leq C \cdot (2\rho)^{t}, \qquad y \in \R^{2}, \, \rho \geq r, \end{displaymath}
that $P_{Y}$ is an $(r,t,r^{-O(\eta)})$-set (we suppress the dependence on "$C$" by choosing $r$ smaller in a manner depending on $C$).

We next study the cardinality of $P_{Y}$ inside the tubes $2T$ for $T \in \mathcal{T}_{x}$, $x \in P_{X}$. Fix $x \in P_{X}$. Recall that
\begin{displaymath} Y_{x} \cap Y_{j} \subset Y_{x} \subset \cup \mathcal{T}_{x}, \end{displaymath}
and $|\mathcal{T}_{x}| \lesssim r^{-\sigma - \eta}$. Since $\nu(Y_{x} \cap Y_{j}) \geq r^{3\eta}$, there is a subset of $\mathcal{T}_{x}' \subset \mathcal{T}_{x}$ of cardinality $|\mathcal{T}_{x}'| \geq r^{O(\eta)} \cdot |\mathcal{T}_{x}|$ such that $\nu(Y_{x} \cap Y_{j} \cap T) \geq r^{\sigma + O(\eta)}$ for all $T \in \mathcal{T}_{x}'$. Evidently $\mathcal{T}_{x}'$ remains a $(r,\sigma,r^{-O(\eta)})$-set. Now, if $T \in \mathcal{T}_{x}'$, we have
\begin{displaymath} r^{\sigma + O(\eta)} \leq \nu(Y_{x} \cap Y_{j} \cap T) \lesssim (2^{-j} \cdot Cr^{t}) \cdot |P_{Y} \cap 2T|, \end{displaymath}
and therefore $|P_{Y} \cap 2T| \gtrsim r^{\sigma + O(\eta)} \cdot (2^{j}r^{-t}) \gtrsim r^{\sigma + O(\eta)}|P_{Y}|$. Finally, define
\begin{displaymath} \mathcal{T}_{x}'' := \{2T : T \in \mathcal{T}_{x}'\}. \end{displaymath}
Then $\mathcal{T}_{x}''$ is an $(r,\sigma,r^{-O(\eta)})$-set of $2r$-tubes for all $x \in P_{X}$, where $P_{X}$ is an $(r,s,r^{-O(\eta)})$-set. Moreover, $|T\cap P_{Y} | \geq r^{\sigma + O(\eta)}|P_{Y}|$ for all $T \in \mathcal{T}_{x}''$, where $P_{Y}$ is an $(r,t,r^{-O(\eta)})$-set. These are precisely the hypotheses of Theorem \ref{main2}. Thus, if $\eta,r > 0$ are small enough, depending on $\sigma,t,\zeta$ (therefore $s,\sigma,t$), it follows that $\sigma \geq s + t - 1 - \zeta$. This contradicts our choice of "$\zeta$" at \eqref{form69}, and completes the proof.  \end{proof}

\section{Higher dimensions}\label{s:higherDim}

In this section, we give new proofs of Theorems \ref{ganDote1}-\ref{ganDote2}, due to Dote and Gan, by reducing them to planar statements, more precisely Corollary \ref{cor:radial2} and Theorem \ref{mainThinTubes}. In fact, as discussed in the introduction, we prove a sharper version of Theorem \ref{ganDote2} that can be seen as a partial extension of Theorem \ref{mainNew} to higher dimensions. We first recall the statements:

\begin{thm}\label{gd1} Let $Y \subset \R^{d}$ be a Borel set with $\Hd Y \in (k,k + 1]$ for some $k \in \{1,\ldots,d - 1\}$. Then,
\begin{displaymath} \Hd \{x \in \R^{d} : \Hd \pi_{x}(Y \, \setminus \, \{x\}) < \sigma\} \leq \max\{k + \sigma - \Hd Y,0\}, \qquad 0 \leq \sigma < k. \end{displaymath}
 \end{thm}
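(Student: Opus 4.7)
The plan is to argue by contradiction and reduce Theorem \ref{gd1} to the planar thin-tubes estimate, Theorem \ref{mainThinTubes}, via the integralgeometric slicing machinery of \cite[Theorem 6.7]{2021arXiv211209044S}. Suppose the exceptional set $X := \{x \in \R^{d} : \Hd \pi_{x}(Y \setminus \{x\}) < \sigma\}$ has $\Hd X > \max\{k + \sigma - \Hd Y, 0\}$. I would fix Frostman measures $\mu \in \mathcal{P}(X)$ and $\nu \in \mathcal{P}(Y)$ with exponents $s_{X}$ and $s_{Y}$ chosen close enough to the respective Hausdorff dimensions (truncating $s_{X}$ below $k+1$ if necessary) so that $s_{X} + s_{Y} > k + \sigma$ and $s_{Y} > k$.

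The first step would be to reduce from $\R^{d}$ to the critical dimension $\R^{k+1}$ by projecting orthogonally to a generic $(k+1)$-dimensional linear subspace $V_{0} \subset \R^{d}$. For $V_{0}$ outside a Grassmannian null set, the pushforwards $P_{V_{0}*}\mu$ and $P_{V_{0}*}\nu$ remain Frostman on $\R^{k+1}$ with the same exponents, using $s_{X},s_{Y} \leq k+1$ and Marstrand's theorem. Moreover, writing $\tau_{V_{0}}(v) := P_{V_{0}}v/|P_{V_{0}}v|$, which is Lipschitz away from the equator $S^{d-1} \cap V_{0}^{\perp}$, the identity $\pi_{P_{V_{0}}x}(P_{V_{0}}y) = \tau_{V_{0}}(\pi_{x}y)$ yields
\[
\Hd \pi_{P_{V_{0}} x}(P_{V_{0}} Y \setminus \{P_{V_{0}} x\}) \leq \Hd \pi_{x}(Y \setminus \{x\}) < \sigma
\]
for $\mu$-a.e.\ $x$. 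Hence it suffices to handle the case $d = k + 1$.

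In $\R^{k+1}$ the next step is to slice both measures by affine $2$-planes $W \in \mathcal{A}(k+1, 2)$ using the integralgeometric tool \cite[Theorem 6.7]{2021arXiv211209044S}. For a positive-measure family of such $W$ (in the natural affine-Grassmannian measure), the sliced measures $\mu_{W}$ and $\nu_{W}$ satisfy Frostman conditions of exponents $s_{X} - (k-1)$ and $s_{Y} - (k-1) > 1$, which are precisely the hypotheses of Theorem \ref{mainThinTubes}. That theorem then yields $(\mu_{W}, \nu_{W})$-thin tubes of every exponent $\sigma_{W} < \min\{s_{X} + s_{Y} - 2k + 1, 1\}$, and hence (by Remark \ref{thinTubesAndDimension}) some $x' \in \spt \mu_{W}$ with $\Hd \pi_{x'}(Y \cap W \setminus \{x'\}) \geq \sigma_{W}$. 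In the opposite direction, slicing the image $\pi_{x'}(Y \setminus \{x'\}) \subset S^{k}$ by great circles $S^{k} \cap V$, parameterised by $V \in G(k+1, 2)$ with $W = x' + V$, gives for a.e.\ such $V$,
\[
\Hd \pi_{x'}(Y \cap W \setminus \{x'\}) \leq \max\{\Hd \pi_{x'}(Y \setminus \{x'\}) - (k-1),\, 0\} < \sigma - (k-1).
\]

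A Fubini argument matching these two conclusions on a common pair $(x', W)$ then forces $\sigma_{W} \leq \sigma - (k-1)$ for every $\sigma_{W} < \min\{s_{X} + s_{Y} - 2k + 1, 1\}$. But $s_{X} + s_{Y} > k + \sigma$ and $\sigma < k$ together yield $\min\{s_{X} + s_{Y} - 2k + 1, 1\} > \sigma - (k-1)$ in both cases of the minimum, a contradiction. The main obstacle I anticipate is the rigorous execution of this Fubini step: one must ensure that the positive-measure family of $2$-planes produced by the integralgeometric slicing of $(\mu, \nu)$ intersects, in a $\mu_{W}$-positive way, the Grassmannian set of $V$ for which the great-circle slicing of $\pi_{x'}$ is well-behaved. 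This is precisely where the quantitative thin-tubes formulation in Theorem \ref{mainThinTubes} (rather than the softer Hausdorff-dimensional statement in Theorem \ref{main}) is indispensable in the reduction.
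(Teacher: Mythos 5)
Your high-level strategy is the same as the paper's---reduce to codimension one via generic orthogonal projections, slice by affine $2$-planes, feed the sliced measures into the planar Theorem~\ref{mainThinTubes}---and the numerology (the computation $\min\{s_X+s_Y-2k+1,1\}>\sigma-(k-1)$) is correct. But the last step, where you close the argument by slicing $\pi_{x'}(Y)$ by great circles and matching with the thin-tubes lower bound on a common pair $(x',W)$, has a genuine gap, and it is precisely the gap you anticipate.

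The problem is the direction of the quantifiers. Theorem~\ref{mainThinTubes} applied to $(\mu_W,\nu_W)$ gives thin tubes for that sliced pair, and Remark~\ref{thinTubesAndDimension} then produces \emph{some} $x'\in\spt\mu_W$ (those $x'$ with $\nu_W(G|_{x'})>0$; a positive-$\mu_W$-measure set, but not a prescribed point) for which $\Hd\pi_{x'}(Y\cap W)\geq\sigma_W$. This $x'$ depends on $W$. Meanwhile your great-circle slicing bound holds, for a \emph{fixed} $x'\in X$, only for $\gamma_{k+1,2}$-a.e.\ $V$. To force a contradiction you need these two conclusions to hold simultaneously for a single $(x',W=x'+V)$, and that requires showing that, parametrising slices as $W=z+V$ with $(z,V)\sim\mu\times\gamma_{k+1,2}$, the conclusion of Remark~\ref{thinTubesAndDimension} can be taken with $x'=z$, the anchor of the slice, on a positive-measure set of $(z,V)$. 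That is not automatic: thin tubes for $(\mu_W,\nu_W)$ say nothing special about the particular point $z$.

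This Fubini interchange is exactly what Theorem~\ref{thm:thin-tubes} (which is \cite[Theorem 6.7]{2021arXiv211209044S}) is designed to do, and you cite it, but only as generic ``slicing machinery''. Its actual content is the upgrade: if the sliced pairs $(\mu_{W,z},\nu_{W,z})$ have $t$-thin tubes for $(\gamma_{d,2}\times\mu)$-a.e.\ $(W,z)$, then the ambient pair $(\mu,\nu)$ has $((d-2)+t)$-thin tubes. Once you have that, you apply Remark~\ref{thinTubesAndDimension} once, in $\R^{d}$, to produce $x'\in\spt\mu\subset X$ with $\Hd\pi_{x'}(Y)\geq\xi$ for $\xi$ as close to $\min\{s_X+s_Y-(d-1),d-1\}$ as you like, contradicting $x'\in X$ outright. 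The great-circle slicing step then becomes unnecessary. In short, you have the right ingredients and the right arithmetic, but you use \cite[Theorem 6.7]{2021arXiv211209044S} in the wrong role; its genuine role (lifting thin tubes from slices to the ambient space) is what rigorously replaces the informal Fubini matching on which your proposal currently rests.

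Two minor remarks. First, your one-shot reduction $\R^{d}\to\R^{k+1}$ should be fine, whereas the paper reduces one dimension at a time; both hinge on the inclusion $\pi_{P_{V}(x)}(P_{V}(Y))\subset\Sigma(P_{V}(\pi_{x}(Y)))$. Second, the sliced measures inherit \emph{energy} bounds rather than pointwise Frostman bounds; Theorem~\ref{mainThinTubes} is stated so as to accept either, so this is harmless, but your phrasing (``Frostman conditions of exponents $s_X-(k-1)$'') should be tightened to finite energy.
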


\begin{thm}\label{gd2}
Let $X,Y\subset\R^d$ be Borel sets with $\Hd X > k - 1$ and $\Hd Y \in (k - 1,k]$ for some $k\in\{1,\ldots,d-1\}$.
\begin{itemize}
\item[\textup{(i)}] If $\Hd X > k$, then $\sup_{x \in X} \Hd \pi_{x}(Y \, \setminus \, \{x\}) = \Hd Y$.
\item[\textup{(ii)}] If $k - 1 < \Hd X \leq k$, but $X$ is not contained on any $k$-plane, the following holds. If $\Hd Y > k - 1/k - \eta$ for a sufficiently small constant $\eta = \eta(d,k,\Hd X) > 0$, then
\begin{displaymath} \sup_{x \in X} \Hd \pi_{x}(Y \, \setminus \, \{x\}) \geq \min\{\Hd X,\Hd Y\}. \end{displaymath}
\end{itemize}
For $k = 1$, we require no lower bound from $\Hd Y$ in part \textup{(ii)}.
\end{thm}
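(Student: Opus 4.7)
The plan for Theorem \ref{gd2} is to reduce both parts to the planar radial projection theorems established earlier in the paper, via two integralgeometric moves: first an orthogonal projection to a generic $(k+1)$-plane, then Marstrand slicing by $2$-planes inside that projected ambient space. This mirrors the strategy of Section~6 of the prequel \cite{2021arXiv211209044S}, where a similar reduction appears.

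The first step reduces to the situation $d = k+1$ by orthogonal projection. Choose a generic $V \in G(d, k+1)$ and write $\pi_V \colon \R^d \to V$ for the orthogonal projection. Marstrand's theorem yields $\Hd \pi_V(X) = \min\{\Hd X, k+1\}$ and $\Hd \pi_V(Y) = \Hd Y$ (using $\Hd Y \leq k < k+1$). The assumption in part (ii) that $X$ is not on any $k$-plane of $\R^d$ transfers to $\pi_V(X)$ not being contained on any $k$-plane of $V$ for a.e.\ $V$, since $X$ then contains $k+2$ points in affine general position, a property preserved by a generic projection. The key geometric observation is that for $\tilde x \in X$ with $x := \pi_V(\tilde x)$, the map
\[
S^{d-1}\setminus (V^{\perp})^{(\epsilon)} \to S^{k}\subset V, \qquad \omega \mapsto \frac{\pi_V(\omega)}{|\pi_V(\omega)|}
\]
is locally Lipschitz and intertwines the radial projections: it sends $\pi_{\tilde x}(y)$ to $\pi_x(\pi_V(y))$. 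Hence $\Hd \pi_{\tilde x}(Y\setminus\{\tilde x\}) \geq \Hd \pi_x(\pi_V(Y)\setminus\{x\})$, so it suffices to prove the conclusion for $\pi_V(X)$ and $\pi_V(Y)$ inside $V = \R^{k+1}$.

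Working in $V$, I would take Frostman measures $\mu$ on $\pi_V(X)$ and $\nu$ on $\pi_V(Y)$ of exponents $s$ and $t$ approaching $\min\{\Hd X, k+1\}$ and $\Hd Y$, and invoke the integralgeometric tool \cite[Theorem~6.7]{2021arXiv211209044S}: if, for almost every affine $2$-plane $W \subset V$, the sliced pair $(\mu_W,\nu_W)$ has $\sigma$-thin tubes inside $W$, then $(\mu,\nu)$ has $(\sigma + (k-1))$-thin tubes in $V$. The classical slicing formulas yield that $\mu_W$ and $\nu_W$ are Frostman with exponents $s' = s - (k-1)$ and $t' = t - (k-1)$. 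Part (i) is then clean: $s > k$ forces $s' > 1$, so $\mu_W$ cannot give mass to any line; thus $\mu_W(\ell)\nu_W(\ell) = 0 < 1$ and Corollary \ref{cor:radial2} (applied with common exponent $\min\{s',t'\}=t'$) produces $\sigma$-thin tubes in $W$ for every $\sigma < t'$. Lifting with Theorem~6.7 and letting $s \uparrow \Hd X$, $t \uparrow \Hd Y$ yields $\sup_{x \in \pi_V(X)} \Hd \pi_x(\pi_V(Y)\setminus\{x\}) \geq \Hd Y$, which Step~1 transfers back to $\R^d$.

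The main obstacle is part (ii): $s > k-1$ only forces $s' > 0$, and $\mu_W$ may give positive, or even full, mass to some line in $W$, so the hypothesis $\mu_W(\ell)\nu_W(\ell) < 1$ of Corollary \ref{cor:radial2} is no longer automatic. I would cope with this by appealing to the measure-theoretic $\eta$-improvement \cite[Proposition~6.8]{2021arXiv211209044S} of Conjecture \ref{conj:cond-meas}, applied to $\nu$ in $V = \R^{k+1}$: provided $\nu$ assigns zero mass to every $k$-plane of $V$ and its Frostman exponent satisfies $t > (k+1) - 1 - \tfrac{1}{(k+1)-1} - \eta = k - \tfrac{1}{k} - \eta$, one concludes that $\nu_W$ does not give full mass to any line of $W$ for almost every slice $W$. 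This is precisely where the quantitative threshold $\Hd Y > k - 1/k - \eta$ in the statement arises; for $k=1$ the bound is vacuous, matching the absence of a lower bound on $\Hd Y$ in that case. The hypothesis that $\nu$ kills every $k$-plane of $V$ is arranged by a generic choice of $V$; in the degenerate case where $Y$ is already contained in a $(d-1)$-plane, one reduces the ambient dimension and invokes induction on $d$. Once this is in hand, $\mu_W(\ell)\nu_W(\ell) < 1$ for almost every slice, Corollary \ref{cor:radial2} produces $\sigma$-thin tubes with $\sigma$ approaching $\min\{s',t'\}$, and lifting via Theorem~6.7 and Step~1 delivers $\sup_{\tilde x \in X} \Hd \pi_{\tilde x}(Y \setminus \{\tilde x\}) \geq \min\{\Hd X, \Hd Y\}$, as desired.
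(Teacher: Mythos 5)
Your overall architecture matches the paper's: reduce the ambient dimension by a generic orthogonal projection, slice by affine $2$-planes, apply the planar results (Corollary~\ref{cor:radial2}), verify the slice hypotheses via \cite[Proposition~6.8]{2021arXiv211209044S}, and lift back with \cite[Theorem~6.7]{2021arXiv211209044S}. Your treatment of part (i), where the exceptional hypotheses are automatic because the sliced $\mu$-exponent exceeds $1$, is correct.

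There is, however, a genuine gap in your handling of part (ii), concentrated in the base case $d=k+1$ (which your reduction must eventually land on, and where there is no further projection to exploit). You claim that the measure-theoretic hypotheses of Proposition~\ref{ShWaProp2} --- namely $\mu(W)=0=\nu(W)$ for every hyperplane $W$, plus the auxiliary condition $\Hd\spt(\nu)<s+\eta$ --- can be ``arranged by a generic choice of $V$,'' with the residual degenerate case dispatched by ``induction on $d$.'' Both claims fail in the base case. First, when $V=\R^d$ there is no projection left to vary, yet a Frostman measure of exponent $<d-1$ on a set not contained in a hyperplane can perfectly well give positive mass to a hyperplane (take $X$ to be a hyperplane plus an off-plane point). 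The genericity argument only controls finitely/countably many offending planes via the condition $V^{\perp}\not\subset W_i$; it says nothing once $V=\R^d$. Second, ``reduce ambient dimension and induct on $d$'' is not available: the theorem's hypotheses pin $\Hd Y$ to $(k-1,k]$ and $\Hd Y>k-1/k-\eta$ relative to the fixed $k$, and shrinking $d$ changes the admissible range. The paper fills precisely this gap with a three-way case analysis in the $k=d-1$ case: (a) if $\Hd(Y\cap W)\geq \min\{\Hd X,\Hd Y\}$ for some hyperplane $W$, argue directly by choosing $x\in X\setminus W$ and noting $\pi_x|_{W}$ is locally bi-Lipschitz; (b) otherwise fix $\epsilon_0>0$ with $\Hd(Y\cap W)\leq u-\epsilon_0$ for all $W$, choose the Frostman exponent of $\nu$ above $u-\epsilon_0$, and deduce $\nu(W)=0$; (c) if $\mu(W)>0$ for some hyperplane $W$, pass to $\mu|_W$ and invoke the Kaufman--Mattila exceptional set estimate for orthogonal projections after a projective change of variables (Remark~\ref{rem1}) that converts radial projections from points of $W$ into orthogonal projections. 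None of these steps are present in your sketch, and step (c) in particular requires a substantive auxiliary argument that genericity cannot replace. You should adopt the paper's case decomposition (or supply an equally careful alternative) before the reduction via Proposition~\ref{ShWaProp2} and Theorem~\ref{thm:thin-tubes} becomes available.
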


We introduce some additional notation. The Grassmanian of linear $m$-planes in $\R^d$ will be denoted by $\mathcal{G}(d,m)$. We endow $\mathcal{G}(d,m)$ with the natural Borel probability measure $\gamma_{d,m}$ invariant under the action of the orthogonal group - see \cite[Chapter 3]{zbMATH01249699}. The orthogonal projection onto $V\in\mathcal{G}(d,m)$ is denoted by $P_{V}$. Similarly, the Grassmanian of \emph{affine} $m$-planes in $\R^d$ is denoted by $\mathcal{A}(d,m)$ and the natural isometry-invariant measure on it by $\lambda_{d,m}$ - see \cite[\S 3.16]{zbMATH01249699} for its definition. Given a finite Borel measure $\mu$ on $\R^d$, the sliced measures of $\mu$ on the affine planes $V+x$ (where $V\in\mathcal{G}(d,m)$ and $x\in V^\perp$) are denoted by $\mu_{V,x}$. We extend the definition to $x\in\R^d$ by setting $\mu_{V,x}:=\mu_{V,P_{V^\perp}x}$. See \cite[Chapter 10]{zbMATH01249699} for the definition and key properties of sliced measures. Finally, to lighten up notation we denote $\pi_y(X)=\pi_y(X \, \setminus \, \{y\})$.

We start by reducing Theorems \ref{gd1}--\ref{gd2} to the special case $k = d - 1$. Similar arguments, involving lifting radial projection estimates from  a random projection to a suitable lower dimensional subspace, appeared earlier in \cite{DIOWZ21, 2021arXiv211209044S}.
\begin{proposition} Theorems \ref{gd1}-\ref{gd2} follow from their special case $k = d - 1$.
\end{proposition}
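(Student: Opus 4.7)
The plan is to apply a random orthogonal projection $P_V$ for $V\in\mathcal{G}(d,k+1)$, which drops everything into $V\cong\R^{k+1}$; there the $k=d-1$ case of each theorem (for ambient dimension $k+1$) has parameter $(k+1)-1=k$, matching the one we started with. Two ingredients power the transfer. The first is the geometric identity: whenever $P_V(y-x)\neq 0$,
\[
\pi_{P_Vx}(P_Vy)=\Phi_V(\pi_x(y)),\qquad \Phi_V(u):=\frac{P_Vu}{|P_Vu|},
\]
where $\Phi_V$ is smooth (hence locally Lipschitz) on $S^{d-1}\setminus V^\perp$. Consequently $\Hd\pi_{P_Vx}(P_VY)\le\Hd\pi_x(Y)$ for every $x\in\R^d$. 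The second is Marstrand's projection theorem: for any Borel $A\subset\R^d$ and $\gamma_{d,k+1}$-a.e.\ $V$, $\Hd P_VA=\min\{\Hd A,k+1\}$.

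For Theorem \ref{gd1} I would argue by contradiction. Set $E=\{x:\Hd\pi_x(Y\setminus\{x\})<\sigma\}$ and assume $\Hd E>u:=\max\{k+\sigma-\Hd Y,0\}$; note $u<k<k+1$ because $\sigma<k$ and $\Hd Y>k$. For a generic $V$, $\Hd P_VY=\Hd Y\in(k,k+1]$ and $\Hd P_VE=\min\{\Hd E,k+1\}>u$. The Lipschitz identity forces $P_VE\subseteq\{x'\in V:\Hd\pi_{x'}(P_VY\setminus\{x'\})<\sigma\}$, so the $k=d-1$ case of \ref{gd1} applied in $V\cong\R^{k+1}$ yields $\Hd P_VE\le\max\{k+\sigma-\Hd P_VY,0\}=u$, a contradiction.

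For Theorem \ref{gd2}(i), a generic $V$ gives $\Hd P_VX\ge\min\{\Hd X,k+1\}>k$ and $\Hd P_VY=\Hd Y$. The $k=d-1$ case of \ref{gd2}(i) in $V$ produces $x'\in P_VX$ with $\Hd\pi_{x'}(P_VY)$ arbitrarily close to $\Hd Y$; any lift $x\in X$ with $P_Vx=x'$ then satisfies $\Hd\pi_x(Y)\ge\Hd\pi_{x'}(P_VY)$. For (ii) the argument is parallel, but one must additionally check that $P_VX$ lies on no hyperplane of $V$ for a.e.\ $V$. Since $X$ is not on any $k$-plane of $\R^d$, its affine hull has dimension $\ge k+1$; a dimension count on the associated linear subspace $W$ shows that for a.e.\ $V$ one has $W+V^\perp=\R^d$, hence $P_V$ sends the affine hull of $X$ onto $V$, and the affine hull of $P_VX$ equals all of $V$. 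The hypothesis $\Hd Y>k-1/k-\eta$ passes through verbatim because $\Hd P_VY=\Hd Y$, so the same $\eta$ works. Note also that, since $\Hd X\le k<k+1$, one has $\Hd P_VX=\Hd X$, so the $\min\{\Hd X,\Hd Y\}$ lower bound in $\R^d$ recovers exactly the one obtained in $V$.

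The only step beyond formal manipulation of the key identity and Marstrand is the verification for (ii) that "no $k$-plane contains $X$" descends to "no hyperplane of $V$ contains $P_VX$" for a.e.\ $V$; this reduces to the linear-algebra dimension count above and does not present a serious obstacle.
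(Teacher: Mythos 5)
Your argument is correct and follows essentially the same approach as the paper: project onto a generic lower-dimensional subspace, use the (Lipschitz) identity $\pi_{P_Vx}(P_Vy)=\Sigma(P_V(\pi_x(y)))$ to transfer radial-projection dimension bounds, invoke the Marstrand--Mattila projection theorem to preserve the dimensions of $X,Y,E$, and for part (ii) verify that the non-degeneracy hypothesis (not contained in a $k$-plane) survives projection via an affine-hull/transversality count. The only organizational difference is that you project directly onto a generic $V\in\mathcal{G}(d,k+1)$ in one step, whereas the paper reduces codimension $1$ at a time (from $\R^{d+1}$ to $\R^d$, same $k$) and then iterates; your one-shot version is slightly cleaner to state, while the paper's single-codimension induction isolates the minimal geometric input at each stage. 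Both require the same ingredients and the same verification for (ii), and the handling of the constant $\eta$ (which must be chosen compatibly with the base case $\eta(k+1,k,\Hd X)$, as you note by observing $\Hd P_VX=\Hd X$) is consistent in both.
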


\begin{proof} Let $\Sigma(x) := x/|x|$ for $x \in \R^{d} \, \setminus \, \{0\}$. We start by proving the following inclusion, valid for every $m$-plane $V \in \mathcal{G}(d,m)$, and for every $0 < m < d - 1$:
\begin{equation}\label{form64} \pi_{P_{V}(x)}(P_{V}(Y)) \subset \Sigma(P_{V}(\pi_{x}(Y))), \qquad x \in \R^{d}. \end{equation}
To see this, fix $x \in \R^{d}$ and $e \in \pi_{P_{V}(x)}(P_{V}(Y)) \in S^{d - 1}$. Thus, there exists $P_{V}(y) \in P_{V}(Y)$, with $y \in Y$, such that $P_{V}(y) \neq P_{V}(x)$ (in particular $y \neq x$), and
\begin{displaymath} e = \frac{P_{V}(y) - P_{V}(x)}{|P_{V}(y) - P_{V}(x)|} = P_{V}\left(\frac{y - x}{|P_{V}(y - x)|} \right) = \frac{1}{|P_{V}((y - x)/|y - x|)|} \cdot P_{V}\left(\frac{y - x}{|y - x|} \right). \end{displaymath}
The right hand side is an element of $\Sigma(P_{V}(\pi_{x}(Y)))$, as claimed.

We then prove the proposition. More precisely, we will establish the following: let $0 < k < d$, and assume that Theorems \ref{gd1}-\ref{gd2} hold in $\R^{d}$ for this "$k$". Then they also hold in $\R^{d + 1}$ with the same value of "$k$". In particular, if the theorems hold for the value $k = d - 1$ in some $\R^{d}$, then they also hold for $k = d - 1$ in every $\R^{D}$ for $D \geq d$.

We start by establishing the statement above for Theorem \ref{gd2}. Parts (i) and (ii) are very similar, but (ii) is slightly harder, so we spell out the details for (ii). Assume to the contrary that there exist Borel sets $X,Y \subset \R^{d + 1}$ with
\begin{displaymath} \Hd X \in (k - 1,k] \quad \text{and} \quad \Hd Y \in (k - 1/k - \eta,k] \end{displaymath}
such that $X$ is not contained on any $k$-plane, and
\begin{displaymath} \sup_{x \in X} \Hd \pi_{x}(Y) < \min\{\Hd X,\Hd Y\}. \end{displaymath}
In order to apply the "known" $\R^{d}$-version of Theorem \ref{gd2} (and then reach a contradiction), we plan to find a suitable orthogonal projection to a $d$-plane $V \in \mathcal{G}(d + 1,d)$. The Marstrand-Mattila projection theorem, \cite[Corollary 9.4]{zbMATH01249699}, shows that for $\gamma_{d+1,d}$-almost every $V \in \mathcal{G}(d + 1,d)$ we have
\begin{equation}\label{form78} \Hd P_{V}(X) = \Hd X \quad \text{and} \quad \Hd P_{V}(Y) = \Hd Y. \end{equation}
On the other hand, \eqref{form64} shows that
\begin{align*} \sup_{x \in X} \Hd \pi_{P_{V}(x)}(P_{V}(Y)) & \leq \sup_{x \in X} \Hd \pi_{x}(Y)\\
& < \min\{\Hd X,\Hd Y\}\\
& \stackrel{\eqref{form78}}{=} \min\{\Hd P_{V}(X),\Hd P_{V}(Y)\}. \end{align*}
This violates the $\R^{d}$-version of Theorem \ref{gd2} in $V \cong \R^{d}$, applied with $P_{V}(X)$ and $P_{V}(Y)$, except for one problem: $P_{V}(X)$ may be contained in a $k$-plane, even if $X$ is not.  However, we claim that the set of $V \in \mathcal{G}(d + 1,d)$ such that this happens has zero $\gamma_{d + 1,d}$ measure. Indeed, pick $x_0,x_1,\ldots,x_{k+1}\in X$ such that the plane $W$ spanned by $\{x_j-x_0\}_{j=1}^{k+1}$ is $(k+1)$-dimensional. Since $\dim W\le d<d+1$, we have $\theta\notin W$ for $\sigma^d$-almost all $\theta\in S^d\subset\R^{d+1}$ (here $\sigma^d$ is the normalized spherical measure). But $\gamma_{d+1,d}$ is the push-forward of $\sigma^{d}$ under $\theta\to \theta^{\perp}$, so $P_V|_W$ is invertible for $\gamma_{d+1,d}$-almost all $V$, and in particular the span of $P_V(X-x_0)$ contains $P_V(W)\in\mathcal{G}(d+1,k+1)$, giving the claim. The proof can now be concluded by choosing $V \in \mathcal{G}(d + 1,d)$ which satisfies \eqref{form78}, and such that $P_{V}(X)$ is not contained in a $k$-plane.



We turn to the proof of Theorem \ref{gd1}. Assume to the contrary that there exist $k \in \{1,\ldots,d - 1\}$, $0 \leq \sigma < k$, and Borel sets $X,Y \subset \R^{d + 1}$ with $\Hd Y \in (k,k + 1]$,
\begin{displaymath} \Hd X > \max\{k + \sigma - \Hd Y,0\} \end{displaymath}
such that $\pi_{x}(Y) \leq \sigma$ for all $x \in X$. Note that
\begin{displaymath} \max\{k + \sigma - \Hd Y,0\} < k < d, \end{displaymath}
since $k\ge 1$ and $\sigma<k<\Hd Y$. In particular, by the Marstrand-Mattila projection theorem, there exists a plane $V \in \mathcal{G}(d + 1,d)$ such that $\Hd P_{V}(Y) = \Hd Y$, and
\begin{equation}\label{form65} \Hd P_{V}(X) > \max\{k + \sigma - \Hd P_{V}(Y),0\}. \end{equation}
On the other hand, \eqref{form64} shows that
\begin{displaymath} \Hd \pi_{P_{V(x)}}(P_{V}(Y)) \leq \Hd \pi_{x}(Y) \leq \sigma, \qquad x \in X. \end{displaymath}
In other words $P_{V}(X) \subset \{z \in V : \Hd \pi_{z}(P_{V}(Y)) \leq \sigma\}$, and therefore
\begin{displaymath} \Hd \{z \in V : \Hd \pi_{z}(P_{V}(Y)) \leq \sigma\} \stackrel{\eqref{form65}}{>} \max\{k + \sigma - \Hd P_{V}(Y),0\}. \end{displaymath}
This contradicts Theorem \ref{gd1} in $V \cong \R^{d}$, and the proof of the proposition is complete. \end{proof}

It remains to prove the $1$-codimensional cases. The constant "$\eta > 0$" in Theorem \ref{gd2}(ii) will arise from an application of \cite[Proposition 6.8]{2021arXiv211209044S}, which we recall here:
\begin{proposition}\label{ShWaProp2} Let $t \in (d - 2,d - 1]$ and $s \in ((d - 1) - 1/(d - 1) - \eta,d - 1]$, where $\eta = \eta(d,t) > 0$ is a sufficiently small constant. Let $\mu,\nu \in \mathcal{P}(\R^{d})$ be measures with disjoint supports such that $I_{t}(\mu) < \infty$ and $I_{s}(\nu) < \infty$. Assume moreover that $\mu(W) = 0 = \nu(W)$ for all $(d - 1)$-planes $V \subset \R^{d}$. Assume also that $\Hd \spt(\nu) < s + \eta$. Then, possibly after restricting $\mu$ and $\nu$ to subsets of positive measure, we have
\begin{displaymath} (\mu \times \gamma_{d,2})\{(x,V) : \mu_{V,x}(\ell)\nu_{V,x}(\ell) = \mu_{V,x}(\R^d)\nu_{V,x}(\R^d) > 0 \text{ for some line } \ell \subset V + x\} = 0. \end{displaymath}
\end{proposition}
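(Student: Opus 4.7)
The plan is to argue by contradiction and reduce to the exceptional-set estimate for orthogonal projections of $\nu$ onto hyperplanes in $\R^d$. Suppose the conclusion fails; after restricting $\mu,\nu$ we may assume there is a Borel set $E\subset\R^d\times\mathcal{G}(d,2)$ with $(\mu\times\gamma_{d,2})(E)>0$ such that on $E$ we can measurably select a line $\ell(x,V)\subset V+x$ carrying the full positive mass of both sliced measures $\mu_{V,x}$ and $\nu_{V,x}$. Write $L(x,V)\in\mathcal{G}(d,1)$ for the direction of $\ell(x,V)$, so necessarily $L(x,V)\subset V$; this yields a Borel map $E\to\mathcal{G}(d,1)$.

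The first key step is to convert the joint concentration on a common line into a statement about orthogonal projections $P_{L^\perp}$. For any $L\in\mathcal{G}(d,1)$, the projection $P_{L^\perp}$ collapses every line of direction $L$ to a single point. Hence for $(x,V)$ in the fibre $E_L:=\{(x,V)\in E : L(x,V)=L\}$, the push-forwards $P_{L^\perp}\mu_{V,x}$ and $P_{L^\perp}\nu_{V,x}$ are atomic. Using the disintegration formula relating $\mu$ to its sliced measures $\mu_{V,x}$ (see \cite[Chapter 10]{zbMATH01249699}) and integrating over the translate parameter in $V^\perp$, this forces $P_{L^\perp}\mu$ (and $P_{L^\perp}\nu$) to be concentrated on a structured, low-dimensional set in $L^\perp$ whenever $E_L$ has positive $\mu$-marginal. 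After pigeonholing, a positive $\gamma_{d,1}$-measure set of directions $L$ has this "pathological" property simultaneously for $\mu$ and $\nu$.

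The second step is to close the contradiction via the improved exceptional-set estimate that is available precisely in the range $s>d-1-1/(d-1)-\eta$. Here one applies an $\epsilon$-improvement of the Kaufman--Peres--Schlag bound for projections onto hyperplanes in $\R^d$ (this is the higher-dimensional analogue of Proposition \ref{ShWaProp}, recorded as \cite[Theorem 6.7]{2021arXiv211209044S}): outside a set of directions $L$ of dimension strictly smaller than $d-1$, the projection $P_{L^\perp}\nu$ preserves the $s$-dimensional Frostman/energy bound, hence cannot concentrate as required by Step 1. The auxiliary hypothesis $\Hd\spt(\nu)<s+\eta$ is exactly what rigidifies $\nu$ around a nearly $s$-dimensional support, so that the quantitative projection estimate can be applied to the small positive-mass subsets produced by the pigeonholing. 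The no-hyperplane conditions $\mu(W)=0=\nu(W)$ rule out the trivial obstruction in which $\nu$ (or $\mu$) is itself concentrated on a single $(d-1)$-plane containing all the directions $L(x,V)$.

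The main obstacle is Step 1: extracting from the purely qualitative statement ``both sliced measures give full mass to the same line'' a \emph{quantitative} Frostman/energy estimate for $P_{L^\perp}\mu$ and $P_{L^\perp}\nu$, with parameters compatible with the exceptional-set theorem in Step 2. The double concentration is essential --- the one-sided analogue (only $\mu_{V,x}$ concentrated on a line) is the content of Conjecture \ref{conj:cond-meas}, which remains open in the same parameter range. Requiring $\mu_{V,x}$ and $\nu_{V,x}$ to share a common line effectively couples the two projection problems, and it is this coupling that permits the threshold $s>d-1-1/(d-1)-\eta$ rather than a stronger lower bound.
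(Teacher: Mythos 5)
Note first that the paper you are reading does \emph{not} prove Proposition~\ref{ShWaProp2}; it recalls it verbatim from \cite[Proposition~6.8]{2021arXiv211209044S} and uses it as a black box in the proof of Theorem~\ref{gd2}(ii). So there is no ``paper's own proof'' to compare against here, and the right benchmark is the argument in the cited reference.

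Your proposal is a high-level sketch rather than a proof, and --- as you acknowledge yourself --- the pivotal Step~1 is a genuine gap, not merely a deferred computation. Concretely, after selecting the direction map $L \colon E \to \mathcal{G}(d,1)$ you pass to the fibres $E_L = \{(x,V) \in E : L(x,V) = L\}$ and then invoke a pigeonhole to get a positive $\gamma_{d,1}$-measure set of ``pathological'' directions. This step does not go through as written: $\mathcal{G}(d,1)$ is a continuum, so for a generic Borel selection $L$ each fibre $E_L$ has $(\mu\times\gamma_{d,2})$-measure zero, and positivity of $(\mu\times\gamma_{d,2})(E)$ does not produce even a single direction $L$ whose fibre is non-null. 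Any workable version of this step has to coarsen to a $\delta$-neighbourhood in direction space, track the scale $\delta$ quantitatively, and then feed the resulting multi-scale information into a \emph{discretised} projection estimate; none of that is in your sketch, and it is precisely this quantification that makes the actual argument nontrivial. Relatedly, you never use the hypothesis $\Hd\spt(\nu)<s+\eta$ in any concrete way, although it is essential (it is exactly the ``almost Ahlfors-regular'' condition that makes the discretised estimate applicable to small positive-mass pieces of $\nu$).

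There is also a mis-citation in Step~2 that signals a wrong turn. You invoke ``\cite[Theorem~6.7]{2021arXiv211209044S}'' as an ``$\epsilon$-improvement of the Kaufman--Peres--Schlag bound for projections onto hyperplanes'' and as a higher-dimensional analogue of Proposition~\ref{ShWaProp}. But that theorem is the thin-tubes upgrading statement (reproduced in this paper as Theorem~\ref{thm:thin-tubes}); it takes thin-tube information on $2$-plane slices and lifts it to $\R^d$, and has nothing to do with exceptional sets of hyperplane projections. The tool that actually governs the threshold $s>(d-1)-1/(d-1)-\eta$ is a higher-dimensional Bourgain-type discretised projection theorem (W.~He's extension of Bourgain's projection theorem), which is a different ingredient and must be quoted as such. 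Finally, your closing remark that the \emph{coupling} between $\mu_{V,x}$ and $\nu_{V,x}$ is what permits the weaker threshold is a correct intuition, but your Steps~1--2 never actually use the two-sided concentration: you reduce to a one-sided statement about $P_{L^\perp}\nu$ alone, which is exactly the form that runs into the open Conjecture~\ref{conj:cond-meas}. In short, the proposal identifies some plausible landmarks but misses both the quantitative discretisation needed to make Step~1 meaningful and the correct external tool for Step~2.
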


We also recall \cite[Theorem 6.7]{2021arXiv211209044S}, which provides the mechanism to upgrade "thin tubes" information from lower to higher dimensions:
\begin{thm} \label{thm:thin-tubes}
Let $\mu,\nu\in\mathcal{P}(\R^d)$ be measures with $I_s(\mu)<\infty$, $I_s(\nu)<\infty$ with $s\in (k,k+1]$ for some $k\in \{1,\ldots,d-2\}$. Suppose that there is $t>0$ such that the sliced measures $(\mu_{W,z},\nu_{W,z})$ have $t$-thin tubes for $(\gamma_{d,d-k}\times\mu)$-almost all $(W,z)$.

Then  $(\mu,\nu)$ have $(k+t)$-thin tubes.
\end{thm}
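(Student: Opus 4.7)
The plan is to construct a good set $G \subset \R^d \times \R^d$ witnessing $(k+t)$-thin tubes for $(\mu,\nu)$ by lifting the slice good sets supplied by the hypothesis. From the sliced thin-tubes hypothesis, for $(\gamma_{d,d-k} \times \mu)$-almost every $(W, z)$ we obtain a Borel set $G^W_z \subset (W+z) \times (W+z)$ of slice mass $\ge c_0$ on which the slicewise bound $\nu_{W,z}(T' \cap G^W_z|_{z'}) \le K_0 r^t$ holds for every $z' \in \spt(\mu_{W,z})$ and every $r$-tube $T' \subset W+z$ through $z'$. By Fubini on the slice, we may further assume $\nu_{W,z}(G^W_z|_{z'}) \ge c_0/2$ for a positive-mass set of basepoints $z'$.

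Since each $G^W_z$ lives on a $(d-k)$-dimensional affine plane and hence has zero global $\nu$-mass, the lift must be thickened by integrating over $W$. I propose the following definition: for $(x,y) \in \spt(\mu)\times\spt(\nu)$ with direction $e := (y-x)/|y-x|$, declare $(x,y) \in G$ if a positive $\gamma^e$-measure of $W \in \mathcal{G}(d,d-k)$ containing $e$ satisfy $y \in G^W_x|_x$, where $\gamma^e$ is the natural probability measure on the fiber $\{W : e \in W\} \cong \mathcal{G}(d-1,d-k-1)$. To verify $(\mu \times \nu)(G) \gtrsim 1$, I would start from
\begin{displaymath}
\int \int (\mu_{W, z} \times \nu_{W, z})(G^W_z) \, d\mu(z)\, d\gamma_{d,d-k}(W) \ge c_0,
\end{displaymath}
and change order of integration from $(W, z, y)$ with $y$ in the slice $W+z$ to $(z, y, W)$ with $W$ ranging over fibers $\{W : e(z,y) \in W\}$. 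This relies on an integral-geometric identity relating the disintegration of $\nu$ by $(d-k)$-planes to the radial-directional decomposition based at $z$; the hypothesis $I_s(\mu), I_s(\nu) < \infty$ with $s > k$ is exactly what is needed to ensure the slice measures are well-defined, non-trivial, and that the requisite Jacobian factors are integrable.

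For the thin-tubes bound, fix $x \in \spt(\mu)$ and an $r$-tube $T$ of direction $e$ through $x$. By the construction of $G$, we may pick $W \in \mathcal{G}(d,d-k)$ with $e \in W$ for which $(W, x)$ is a good slice. The slicing formula gives
\begin{displaymath}
\nu(T \cap G|_x) = \int_{W^\perp} \nu_{W, u}(T \cap (W+u) \cap G|_x)\, du.
\end{displaymath}
Since $e \in W$, the tube $T$ meets $W + u$ only when $|u - P_{W^\perp}(x)| \lesssim r$, so the $u$-integration runs over a ball of $k$-dimensional volume $O(r^k)$, and on each such slice $T \cap (W+u)$ is an $r$-tube in $W+u$ of direction $e$. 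By the construction of $G$ (which couples $G|_x$ fibrewise to the slice good sets) and the slicewise $t$-thin-tubes estimate from Step~1, applied with a basepoint selected from $\spt(\mu_{W,u})$ (which exists for $u$-a.e.\ $u$ by Fubini against the good-$(W,z)$ set), we obtain
\begin{displaymath}
\nu_{W,u}(T \cap (W+u) \cap G|_x) \lesssim r^t.
\end{displaymath}
Integrating over $u$ yields $\nu(T \cap G|_x) \lesssim r^k \cdot r^t = r^{k+t}$, as required.

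The main obstacle will be the middle step, namely making precise the integral-geometric identity that converts positivity of the individual slice masses $(\mu_{W,z} \times \nu_{W,z})(G^W_z) \ge c_0$ into positive global mass $(\mu \times \nu)(G) \gtrsim 1$. This amounts to a change of variables between the slicing parameter $W \in \mathcal{G}(d, d-k)$ and the radial direction $e \in S^{d-1}$ from a basepoint, and the $s > k$ hypothesis is used here to handle the Jacobian and to guarantee that the slice measures are absolutely continuous with respect to the natural disintegration. A secondary technical point is ensuring that for $u$ away from $P_{W^\perp}(x)$ the slicewise thin-tubes bound still applies to the tube $T \cap (W+u)$ — here one either chooses a basepoint in $\spt(\mu_{W,u}) \cap T$ (generically non-empty for $T \cap G|_x$ to be non-trivial) or, when such a basepoint is unavailable, absorbs the slice's contribution into the good set's definition so that only "usable" $u$'s contribute.
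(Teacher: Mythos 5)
This theorem is not proved in the paper: it is quoted verbatim as \cite[Theorem 6.7]{2021arXiv211209044S} and used as a black box, so there is no in-paper proof to compare against. Evaluating your proposal on its own merits, the broad strategy (lift the slice good sets through the disintegration over $W\in\mathcal{G}(d,d-k)$, and recover the tube estimate by integrating the slicewise tube bound over the $\sim r^k$ volume of parallel slices hitting the tube) is the right one, and the $s>k$ hypothesis does indeed enter exactly where you say it does, to make the sliced measures well-defined and nontrivial. But there are two gaps beyond the one you flag, and both are serious.

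First, there is a mismatch in which slice good set you can actually use. Your set $G$ is defined so that $(x,y)\in G$ references $y\in G^W_x|_x$, i.e.\ the good set of the slice $(W,P_{W^\perp}x)$. But in the final tube estimate you slice $\nu(T\cap G|_x)$ by the planes $W+u$ with $|u-P_{W^\perp}x|\lesssim r$, and the only slicewise thin-tubes information available there is for the good set $G^W_u$ of the slice $(W,u)$, with basepoints $z'\in\spt(\mu_{W,u})$. There is no reason that $T\cap G|_x\cap(W+u)\subset G^W_u|_{z'}$ for any such $z'$, so the claimed inequality $\nu_{W,u}(T\cap(W+u)\cap G|_x)\lesssim r^t$ does not follow from anything you have set up. Your second fallback (``absorb the slice's contribution into the good set's definition so that only usable $u$'s contribute'') is circular as stated, because restricting $G|_x$ slice by slice would make $G$ depend on the choice of $W$, and you need a single global Borel $G$ with large $\mu\times\nu$ mass that simultaneously works for every $x$ and every tube direction.

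Second, and independently, the final integration step does not close. The slicewise thin-tubes bound is a statement about the normalized slice measures $\widehat\nu_{W,u}=\nu_{W,u}/\|\nu_{W,u}\|$. Unnormalizing, the contribution of slice $u$ is $\lesssim K\,r^t\,\|\nu_{W,u}\|$, and summing over $|u-P_{W^\perp}x|\lesssim r$ gives at best $K\,r^t\cdot P_{W^\perp}\nu\bigl(B(P_{W^\perp}x,r)\bigr)$, not $K\,r^{t+k}$. With only $I_s(\nu)<\infty$, $s>k$, the projected measure $P_{W^\perp}\nu$ has an $L^2$ density, which does not give the pointwise Frostman bound $P_{W^\perp}\nu(B_\rho)\lesssim\rho^k$; the density can be locally large, and then this factor overwhelms $r^t$. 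One has to combine the slicing with a maximal-function (or energy/Markov pigeonhole) argument showing that for $(\gamma_{d,d-k}\times\mu)$-typical $(W,x)$ the averaged density of $P_{W^\perp}\nu$ near $P_{W^\perp}x$ is bounded, and then restrict $G$ to those $(W,x)$. This is an essential step, not a technicality, and your write-up glides past it. Combined with the unproved integral-geometric identity you do flag (which is itself a genuine piece of content, not a routine Fubini), the proposal is at present a plan with the right shape but with the hard parts still open.
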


We begin with Theorem \ref{gd2}, since the numerology is slightly simpler.

\begin{proof}[Proof of Theorem \ref{gd2} for $k = d - 1$] We start by proving the following claim about thin tubes. Let $\mu,\nu$ be  measures on $\R^{d}$ with $I_{t}(\mu)<\infty$ and $I_{s}(\nu)<\infty$, where either
\begin{equation}\label{form79} t \in (d - 2,d - 1] \quad \text{and} \quad s \in \left((d - 1) - \tfrac{1}{d - 1} - \eta,d - 1\right] \end{equation}
with the constant $\eta = \eta(d,t) > 0$ from Proposition \ref{ShWaProp2}, or
\begin{equation}\label{form80} t > d - 1 \quad \text{and} \quad d - 2 < s \leq d - 1. \end{equation}
In the case \eqref{form79}, assume moreover that $\mu(W) = 0 = \nu(W)$ for all $(d - 1)$-planes $W \subset \R^{d}$, and $\Hd \spt(\nu) < s + \eta$. Then $(\mu,\nu)$ has $\sigma$-thin tubes for all $0 \leq \sigma < \min\{s,t\}$.

The case $d=2$ follows immediately from Corollary \ref{cor:radial2}. Consider now the case $d\ge 3$.  Since $t > d - 2$, by the Marstrand-Mattila projection theorem (in this application the absolutely continuous case, \cite[Theorem 9.7]{MR3617376}), the push-forward of $\mu\times\gamma_{d,2}$ under $(x,V)\to V+x$ is absolutely continuous with respect to $\lambda_{d,2}$ (for details, see \cite[Lemma 6.3]{2021arXiv211209044S}). Since also $s > d - 2$, we then deduce from the Marstrand-Mattila slicing theorem, \cite[Theorem 10.7]{zbMATH01249699}, that
\[
I_{s - (d - 2)}(\nu_{V,x})<\infty \quad \text{and} \quad I_{t - (d - 2)}(\mu_{V,x})<\infty \quad\text{for } (\mu\times\gamma_{d,2})\text{-almost all }(x,V).
\]
In the case \eqref{form79}, and under the assumptions $\mu(W) = 0 = \nu(W)$ and $\Hd \spt(\nu) < s + \eta$, we may infer the following from Proposition \ref{ShWaProp2}: for $(\mu \times \gamma_{d,2})$ almost every pair $(x,V)$ such that $\mu_{V,x} \neq 0 \neq \nu_{V,x}$, we have
\begin{displaymath} \mu_{V,x}(\ell)\nu_{V,x}(\ell) < \mu_{V,x}(\R^d)\nu_{V,x}(\R^d) \end{displaymath}
for every line $\ell \subset V + x$. The same is also true in the case \eqref{form80}, for the simpler reason that $t - (d - 2) > 1$: this implies that $\mu_{V,x}(\ell) = 0$ for all lines $\ell \subset V + x$ whenever $I_{t - (d - 2)}(\mu_{V,x}) < \infty$, and thus for $(\mu \times \gamma_{d,2})$ almost every $(x,V)$.

In both cases \eqref{form79}-\eqref{form80}, we have now checked that $\mu_{V,x}$ and $\nu_{V,x}$ satisfy the hypotheses of Corollary \ref{cor:radial2} for $(\mu \times \gamma_{d,2})$ almost every $(x,V)$. Consequently, $(\mu_{V,x},\nu_{V,x})$ has $\sigma$-thin tubes for all $0 \leq \sigma < \min\{s - (d - 2),t - (d - 2)\}$. Finally, it follows from Theorem \ref{thm:thin-tubes}, applied with $k=d-2$, that $(\mu,\nu)$ has $\sigma$-thin tubes for all $0 \leq \sigma < \min\{s,t\}$, as claimed.

We are then equipped to prove the case $k = d - 1$ of Theorem \ref{gd2}. We only spell out the details for Theorem \ref{gd2}(ii), which uses the case \eqref{form79} of the thin tubes statement above. Theorem \ref{gd2}(i) uses the case \eqref{form80} and is substantially simpler.

 Let $X,Y \subset \R^{d}$ be  Borel sets with
 \begin{displaymath} \Hd X \in (d - 2,d - 1] \quad \text{and} \quad \Hd Y \in \left((d - 1) - \tfrac{1}{d - 1} - \eta,d - 1\right]. \end{displaymath}
 Assume that $X$ is not contained in any $(d-1)$-plane, and write $u := \min\{\Hd X,\Hd Y\}$. We claim that $\sup_{x \in X} \Hd \pi_{x}(Y) \geq u$. We first dispose of a special case where
\begin{displaymath} \sup_{W\in\mathcal{A}(d,d-1)} \Hd (Y \cap W) \geq u. \end{displaymath}
Since $\pi_x|_W$ is locally bi-Lipschitz for any $(d-1)$-plane $W$ and $x\in \R^d\setminus W$, and since for any such plane $W$ we may by assumption find $x\in X\setminus W$, we see that $\sup_{x \in X} \Hd \pi_{x}(Y) \geq u$ in this case.

Assume next that there exists $\epsilon_{0} > 0$ such that $\Hd (Y \cap W) \leq u - \epsilon_{0}$ for all $(d - 1)$-planes $W \subset \R^{d}$. Then, pick $d - 2 < t < \Hd X$ and $u - \epsilon_{0} < s < \Hd Y$ satisfying
\begin{equation}\label{form81} s > (d - 1) - \tfrac{1}{d - 1} - \eta \quad \text{and} \quad \Hd Y < s + \eta. \end{equation}
Let $\mu,\nu \in \mathcal{P}(\R^{d})$ with $I_{t}(\mu) < \infty$, $I_{s}(\nu) < \infty$ and $\spt(\mu) \subset X$ and $\spt(\nu) \subset Y$. Note that $\nu(W) = 0$ for all $(d - 1)$-planes $W \subset \R^{d}$, because otherwise
\[
\Hd (Y \cap W) \geq \Hd(\spt(\nu) \cap W) \geq s > u - \epsilon_{0},
\]
contrary to our assumption. Also, $\Hd \spt(\nu) \leq \Hd Y < s + \eta$.

These observations nearly place us in a position where we can apply the case \eqref{form79} of the first part of the proof. We are only missing the information that $\mu(W) = 0$ for all $(d - 1)$-planes $W \subset \R^{d}$. So, let us now treat the special case where $\mu(W) > 0$ for some $(d - 1)$-plane $W \subset \R^{d}$.

Since $\nu(W) = 0$, there exists a compact set $K \subset Y \, \setminus \, W$ such that $\nu(K) > 0$. Now, since $W$ is a $(d - 1)$-plane, the family of radial projections $\{\pi_{x}\}_{x \in W}$ is "morally the same" as the family of orthogonal projections $\{P_{V}\}_{V \in \mathcal{G}(d,d - 1)}$. More precisely, there exist (dimension preserving) projective maps $F \colon \R^{d} \to \R^{d}$ and $x \mapsto V(x)$ from $W$ to $\mathcal{G}(d,d - 1)$ such that
\begin{equation}\label{form82} \Hd \pi_{x}(K) = \Hd P_{V(x)}(F(K)), \qquad x \in W. \end{equation}
We outsource the justification to Remark \ref{rem1}. By the Kaufman-Mattila exceptional set estimate for orthogonal projections \cite[Theorem 5.10]{MR3617376}, combined with \eqref{form82}, we have
\begin{displaymath} \Hd \{x \in W : \Hd \pi_{x}(K) < \sigma\} = \Hd \{V \in \mathcal{G}(d,d - 1) : \Hd P_{V}(F(K)) < \sigma\} \leq \sigma \end{displaymath}
for all $0 \leq \sigma < \Hd K$. Since $\Hd (X \cap W) \geq \Hd (\spt(\mu) \cap W) \geq s$, we therefore get
\begin{displaymath} \sup_{x \in X} \Hd \pi_{x}(Y) \geq \sup_{x \in X \cap W} \Hd \pi_{x}(K) \geq \Hd K \geq s. \end{displaymath}
This proves our claim by letting $s \nearrow \Hd Y$.

We have now reduced the proof to the case where $\mu(W) = 0 = \nu(W)$ for all $(d - 1)$-planes $W \subset \R^{d}$. Since also $I_{t}(\mu) < \infty$ and $I_{s}(\nu) < \infty$, where $s$ satisfies \eqref{form81}, we may apply the "thin tubes" statement (case \eqref{form79}) at the beginning of the proof. The conclusion is that $(\mu,\nu)$ has $\sigma$-thin tubes for all $0 \leq \sigma < \min\{s,t\}$. Therefore $\sup_{x \in X} \Hd \pi_{x}(Y) \geq \min\{s,t\}$, and the proof is completed by letting $s \nearrow \Hd Y$ and $t \nearrow \Hd X$.
\end{proof}

\begin{remark}\label{rem1} Let $W := \R^{d - 1} \times \{0\} \subset \R^{d}$. The purpose of this remark is to justify the (well-known) fact that the family of radial projections $\{\pi_{x}\}_{x \in W}$ is "morally the same" as the family of orthogonal projections $\{P_{V}\}_{V \in \mathcal{G}(d,d - 1)}$, and in particular \eqref{form82}. To see this, let $F \colon \R^{d} \, \setminus \, W \to \R^{d}$ be the projective transformation
\begin{displaymath} F(\bar{x},x_{d}) := \frac{(\bar{x},1)}{x_{d}}, \qquad \bar{x} \in \R^{d - 1}, \, x_{d} \neq 0. \end{displaymath}
For $w \in \R^{d - 1}$ and $e \in S^{d - 1} \, \setminus \, W$, let $\ell_{w}(e) := (w,0) + \spa(e)$. The family
\begin{displaymath} \mathcal{L}(w) := \{\ell_{w}(e) : e \in S^{d - 1} \, \setminus \, W\}, \qquad w \in \R^{d - 1}, \end{displaymath}
then contains all the lines passing through $(w,0) \in W$ which are not contained in $W$. These lines are the fibres of the radial projections $\pi_{(w,0)}$, restricted to $\R^{d} \, \setminus \, W$. Now, a straightforward calculation shows that
\begin{displaymath} F(\ell_{w}(e)) = L_{e}(w), \qquad w \in \R^{d - 1}, \, e \in S^{d - 1} \, \setminus \, W, \end{displaymath}
where $L_{e}(w) = \spa(w,1) + (\bar{e}/e^{d},0)$. Therefore, $F$ transforms the lines in $\mathcal{L}(w)$ passing through $(w,0) \in W$ into lines parallel to the vector $(w,1)$. Since $(\bar{e}/e^{d},0)$ can take any value in $W$, in fact $\{F(\ell) : \ell \in \mathcal{L}(w)\}$ consists of all lines parallel to $(w,1)$. These lines are the fibres of the orthogonal projection $\pi_{V(w)}$ to $V(w) := (w,1)^{\perp} \in \mathcal{G}(d,d - 1)$. It follows from these observations (with a little more effort) that
\begin{displaymath} \Hd \pi_{(w,0)}(K) = \Hd \pi_{V(w)}(F(K)), \qquad w \in \R^{d - 1}, \, K \subset \R^{d} \, \setminus \, W, \end{displaymath}
as we claimed in \eqref{form82}. \end{remark}

Finally, we prove the case $k = d -1$ of Theorem \ref{gd1}.

\begin{proof}[Proof of Theorem \ref{gd1} for $k = d - 1$] We first claim the following. Let $d - 1 < t \leq d$ and $d - 2 < s \leq d$. Let $\mu,\nu$ be finite Borel measures on $\R^{d}$ with disjoint supports, $I_{s}(\mu) < \infty$, and $I_{t}(\nu) < \infty$. Then $(\mu,\nu)$ has $\sigma$-thin tubes for
\begin{displaymath} 0 \leq \sigma < \min\{s + t - (d - 1),d - 1\}. \end{displaymath}
Since $s,t > d - 2$, the same argument as in the previous proof (applying the Marstrand-Mattila projection and slicing theorems) yields that
\[
I_{s - (d - 2)}(\nu_{V,x})<\infty, \quad I_{t - (d - 2)}(\mu_{V,x})<\infty \quad\text{for } (\mu\times\gamma_{d,2})\text{-almost all }(x,V).
\]
We then apply the planar case, Theorem \ref{mainThinTubes} to the measures $\mu_{V,x}$ and $\nu_{V,x}$ (the hypotheses are evidently valid). The conclusion is that $(\mu_{V,x},\nu_{V,x})$ has $\sigma$-thin tubes for all
\begin{displaymath} 0 \leq \sigma < \min\{[s - (d - 2)] + [t - (d - 2)] - 1,1\}. \end{displaymath}
Now, observe that
\begin{displaymath} [s - (d - 2)] + [t - (d - 2)] - 1 = (s + t - (d - 1)) - (d - 2). \end{displaymath}
Consequently, it follows from Theorem \ref{thm:thin-tubes} that $(\mu,\nu)$ have $\sigma$-thin tubes for all $0 \leq \sigma \leq \min\{s + t - (d - 1),d - 1\}$, as we claimed.

The rest is standard, but let us spell out the details anyway. Assume, to reach a contradiction, that there exists a Borel set $Y \subset \R^{d}$ with $d - 1 < \Hd Y \leq d$ such that
\begin{displaymath} \Hd \{x \in \R^{d} : \Hd \pi_{x}(Y) < \sigma - \epsilon_{0}\} > \max\{(d - 1) + \sigma - \Hd Y,0\} \end{displaymath}
for some $\sigma \in [0,d - 1)$ and $\epsilon_{0} > 0$. We may infer from this counter assumption that $\sigma > \Hd Y - 1$ because $\{x \in \R^{d} : \Hd \pi_{x}(Y) < \Hd Y - 1\} = \emptyset$. Indeed, $Y\setminus\{x\}\subset \pi_{x}(Y)\times (0,\infty)$ when viewed in polar coordinates centred at $x$, and so $\Hd Y\le \Hd(\pi_{x}Y)+1$.   Therefore
\begin{equation}\label{form83} s := (d - 1) + \sigma - \Hd Y > d - 2. \end{equation}
Let $\mu \in \mathcal{P}(\R^{d})$ with $\spt(\mu) \subset \{x : \Hd \pi_{x}(Y) < \sigma - \epsilon_{0}\}$ with $I_{s}(\mu) < \infty$. Let
\begin{displaymath} \max\{d - 1,\Hd Y - \epsilon_{0}/2\} < t < \Hd Y, \end{displaymath}
and let $\nu \in \mathcal{P}(\R^{d})$ with $\spt(\nu) \subset Y$ with $I_{t}(\nu) < \infty$. As a small detail, we may arrange at this point that $\spt(\mu) \cap \spt(\nu) = \emptyset$ by restricting $\mu,\nu$ to disjoint balls with positive measure. By the claim established in the first part of the proof, it follows that $(\mu,\nu)$ has $\xi$-thin tubes for all $0 \leq \xi < \min\{s + t - (d - 1),d - 1\}$. We pick
\begin{displaymath} \xi > \min\{s + t - (d - 1) - \epsilon_{0}/2,d - 1 - \epsilon_{0}\} \stackrel{\eqref{form83}}{>} \sigma - \epsilon_{0}, \end{displaymath}
so that $(\mu,\nu)$ in particular has $(\sigma - \epsilon_{0})$-thin tubes. This implies that there exists a point $x \in \spt(\mu)$ with $\Hd \pi_{x}(Y) \geq \sigma - \epsilon_{0}$. This contradicts $\spt(\mu) \subset \{x : \Hd \pi_{x}(Y) < \sigma - \epsilon_{0}\}$, and completes the proof. \end{proof}

\def\cprime{$'$}


\begin{thebibliography}{10}

\bibitem{MR729781}
J\'{o}zsef Beck.
\newblock On the lattice property of the plane and some problems of {D}irac,
  {M}otzkin and {E}rd{\H o}s in combinatorial geometry.
\newblock {\em Combinatorica}, 3(3-4):281--297, 1983.

\bibitem{MR3458388}
Matthew Bond, Izabella {\L}aba, and Joshua Zahl.
\newblock Quantitative visibility estimates for unrectifiable sets in the
  plane.
\newblock {\em Trans. Amer. Math. Soc.}, 368(8):5475--5513, 2016.

\bibitem{MR12236}
R.~Leonard Brooks.
\newblock On colouring the nodes of a network.
\newblock {\em Proc. Cambridge Philos. Soc.}, 37:194--197, 1941.

\bibitem{MR1762426}
Marianna Cs\"{o}rnyei.
\newblock On the visibility of invisible sets.
\newblock {\em Ann. Acad. Sci. Fenn. Math.}, 25(2):417--421, 2000.

\bibitem{2021arXiv210704471D}
Damian {D{\k{a}}browski}, Tuomas {Orponen}, and Michele {Villa}.
\newblock {Integrability of orthogonal projections, and applications to
  Furstenberg sets}.
\newblock {\em arXiv e-prints}, page arXiv:2107.04471, July 2021.

\bibitem{2022arXiv220803597D}
Paige {Dote} and Shengwen {Gan}.
\newblock {Exceptional set estimates for orthogonal and radial projections in
  $\mathbb{R}^n$}.
\newblock {\em arXiv e-prints}, page arXiv:2208.03597, August 2022.

\bibitem{DIOWZ21}
Xiumin Du, Alex Iosevich, Yumeng Ou, Hong Wang, and Ruixiang Zhang.
\newblock An improved result for {F}alconer's distance set problem in even
  dimensions.
\newblock {\em Math. Ann.}, 380(3-4):1215--1231, 2021.

\bibitem{MR673510}
Kenneth~J. Falconer.
\newblock Hausdorff dimension and the exceptional set of projections.
\newblock {\em Mathematika}, 29(1):109--115, 1982.

\bibitem{FasslerOrponen14}
Katrin F\"{a}ssler and Tuomas Orponen.
\newblock On restricted families of projections in {$\Bbb R^3$}.
\newblock {\em Proc. Lond. Math. Soc. (3)}, 109(2):353--381, 2014.

\bibitem{2021arXiv211105093F}
Yuqiu {Fu} and Kevin {Ren}.
\newblock {Incidence estimates for $\alpha$-dimensional tubes and
  $\beta$-dimensional balls in $\mathbb{R}^2$}.
\newblock {\em arXiv e-prints}, page arXiv:2111.05093, October 2021.

\bibitem{GIOW20}
Larry Guth, Alex Iosevich, Yumeng Ou, and Hong Wang.
\newblock On {F}alconer's distance set problem in the plane.
\newblock {\em Invent. Math.}, 219(3):779--830, 2020.

\bibitem{GSW19}
Larry Guth, Noam Solomon, and Hong Wang.
\newblock Incidence estimates for well spaced tubes.
\newblock {\em Geom. Funct. Anal.}, 29(6):1844--1863, 2019.

\bibitem{HSY21}
Korn\'{e}lia H\'{e}ra, Pablo Shmerkin, and Alexia Yavicoli.
\newblock An improved bound for the dimension of
  {$(\alpha,2\alpha)$}-{F}urstenberg sets.
\newblock {\em Rev. Mat. Iberoam.}, 38(1):295--322, 2022.

\bibitem{MR1856956}
Nets~Hawk Katz and Terence Tao.
\newblock Some connections between {F}alconer's distance set conjecture and
  sets of {F}urstenburg type.
\newblock {\em New York J. Math.}, 7:149--187, 2001.

\bibitem{Ka}
Robert Kaufman.
\newblock On {H}ausdorff dimension of projections.
\newblock {\em Mathematika}, 15:153--155, 1968.

\bibitem{KeletiShmerkin19}
Tam\'{a}s Keleti and Pablo Shmerkin.
\newblock New bounds on the dimensions of planar distance sets.
\newblock {\em Geom. Funct. Anal.}, 29(6):1886--1948, 2019.

\bibitem{MR4269398}
Bochen Liu.
\newblock On {H}ausdorff dimension of radial projections.
\newblock {\em Rev. Mat. Iberoam.}, 37(4):1307--1319, 2021.

\bibitem{LiuShen20}
Bochen Liu and Chun-Yen Shen.
\newblock Intersection between pencils of tubes, discretized sum-product, and
  radial projections.
\newblock Preprint, arXiv:2001.02551, 2020.

\bibitem{MR396344}
Laszlo Lov\'{a}sz.
\newblock Three short proofs in graph theory.
\newblock {\em J. Combinatorial Theory Ser. B}, 19(3):269--271, 1975.

\bibitem{2022arXiv220507431L}
Ben {Lund}, Thang {Pham}, and Vu~Thi {Huong Thu}.
\newblock {Radial projection theorems in finite spaces}.
\newblock {\em arXiv e-prints}, page arXiv:2205.07431, May 2022.

\bibitem{MR4179019}
Neil Lutz and Donald~M. Stull.
\newblock Bounding the dimension of points on a line.
\newblock {\em Inform. and Comput.}, 275:104601, 15, 2020.

\bibitem{Mar}
John~M. Marstrand.
\newblock Some fundamental geometrical properties of plane sets of fractional
  dimensions.
\newblock {\em Proc. London Math. Soc. (3)}, 4:257--302, 1954.

\bibitem{zbMATH01249699}
Pertti {Mattila}.
\newblock {\em {Geometry of sets and measures in Euclidean spaces. Fractals and
  rectifiability. 1st paperback ed.}}
\newblock Cambridge: Cambridge University Press, 1st paperback ed. edition,
  1999.

\bibitem{MR2044636}
Pertti Mattila.
\newblock Hausdorff dimension, projections, and the {F}ourier transform.
\newblock {\em Publ. Mat.}, 48(1):3--48, 2004.

\bibitem{MR3617376}
Pertti Mattila.
\newblock {\em Fourier analysis and {H}ausdorff dimension}, volume 150 of {\em
  Cambridge Studies in Advanced Mathematics}.
\newblock Cambridge University Press, Cambridge, 2015.

\bibitem{Mattila19}
Pertti Mattila.
\newblock Hausdorff dimension, projections, intersections, and {B}esicovitch
  sets.
\newblock In St\'{e}phane~Jaffard Akram~Aldroubi, Carlos~Cabrelli and Ursula
  Molter, editors, {\em New Trends in Applied Harmonic Analysis, Volume 2},
  Applied and Numerical Harmonic Analysis, pages 129--157. Brikh\"{a}user,
  Cham, 2019.

\bibitem{MR3778538}
Tuomas Orponen.
\newblock A sharp exceptional set estimate for visibility.
\newblock {\em Bull. Lond. Math. Soc.}, 50(1):1--6, 2018.

\bibitem{MR3892404}
Tuomas Orponen.
\newblock On the dimension and smoothness of radial projections.
\newblock {\em Anal. PDE}, 12(5):1273--1294, 2019.

\bibitem{2021arXiv210603338O}
Tuomas {Orponen} and Pablo {Shmerkin}.
\newblock {On the Hausdorff dimension of Furstenberg sets and orthogonal
  projections in the plane}.
\newblock {\em arXiv e-prints}, page arXiv:2106.03338, June 2021.

\bibitem{2022arXiv220513890O}
Tuomas {Orponen} and Pablo {Shmerkin}.
\newblock {On exceptional sets of radial projections}.
\newblock {\em arXiv e-prints}, page arXiv:2205.13890, May 2022.

\bibitem{MR1749437}
Yuval Peres and Wilhelm Schlag.
\newblock Smoothness of projections, {B}ernoulli convolutions, and the
  dimension of exceptions.
\newblock {\em Duke Math. J.}, 102(2):193--251, 2000.

\bibitem{2021arXiv210807311R}
Orit~E. {Raz} and Joshua {Zahl}.
\newblock {On the dimension of exceptional parameters for nonlinear
  projections, and the discretized Elekes-R{\'o}nyai theorem}.
\newblock {\em arXiv e-prints}, page arXiv:2108.07311, August 2021.

\bibitem{Shmerkin20}
Pablo Shmerkin.
\newblock A nonlinear version of {B}ourgain's projection theorem.
\newblock J. Eur. Math. Soc. (to appear), 2022.

\bibitem{2021arXiv211209044S}
Pablo {Shmerkin} and Hong {Wang}.
\newblock {On the distance sets spanned by sets of dimension $d/2$ in
  $\mathbb{R}^d$}.
\newblock {\em arXiv e-prints}, page arXiv:2112.09044, December 2021.

\bibitem{MR2329222}
K\'{a}roly Simon and Boris Solomyak.
\newblock Visibility for self-similar sets of dimension one in the plane.
\newblock {\em Real Anal. Exchange}, 32(1):67--78, 2006/07.

\bibitem{Stull22}
Donald~M. Stull.
\newblock Pinned distance sets using effective dimension.
\newblock Preprint, arXiv:2207.12501, 2022.

\bibitem{MR729791}
Endre Szemer\'{e}di and William~T. Trotter, Jr.
\newblock Extremal problems in discrete geometry.
\newblock {\em Combinatorica}, 3(3-4):381--392, 1983.

\bibitem{Wolff99}
Thomas Wolff.
\newblock Recent work connected with the {K}akeya problem.
\newblock In {\em Prospects in mathematics ({P}rinceton, {NJ}, 1996)}, pages
  129--162. Amer. Math. Soc., Providence, RI, 1999.

\end{thebibliography}

\end{document}